\theoremstyle{plain}
\newtheorem{theorem}{Theorem}[section]
\newtheorem{corollary}{Corollary}[section]
\newtheorem{lemma}{Lemma}[section]
\theoremstyle{definition}
\newtheorem{assumption}{Assumption}[section]
\newtheorem{remark}{Remark}[section]
\renewcommand\@biblabel[1]{#1.}
\newcommand{\R}{\mathbb{R}}
\newcommand{\set}[1]{\left\{#1\right\}}
\newcommand{\sets}[1]{\{#1\}}
\newcommand{\norm}[1]{\left\Vert#1\right\Vert}
\newcommand{\norms}[1]{\Vert#1\Vert}
\newcommand{\Eproof}{\hfill $\square$}
\newcommand{\zero}[1]{{\boldsymbol{0}}}
\newcommand{\pb}{p}
\newcommand{\Bc}{\mathcal{B}}
\newcommand{\Ub}{\mathbf{U}}
\newcommand{\Sc}{\mathcal{S}}
\newcommand{\Tc}{\mathcal{T}}
\newcommand{\Fc}{\mathcal{F}}
\newcommand{\iprods}[1]{\langle #1\rangle}
\newcommand{\Exp}[1]{\mathbb{E}\left[#1\right]}
\newcommand{\Exps}[2]{\mathbb{E}_{#1}\left[#2\right]}
\newcommand{\Probn}{\mathbb{P}}
\newcommand{\Prob}[1]{\mathbb{P}\left(#1\right)}
\newcommand{\BigO}[1]{\mathcal{O}\left(#1\right)}
\newcommand{\beforesec}{\vspace{-2.5ex}}
\newcommand{\aftersec}{\vspace{-2.25ex}}
\newcommand{\beforesubsec}{\vspace{-2ex}}
\newcommand{\aftersubsec}{\vspace{-1.5ex}}
\newcommand{\beforesubsubsec}{\vspace{-2.0ex}}
\newcommand{\aftersubsubsec}{\vspace{-1.5ex}}
\newcommand{\beforepara}{\vspace{-2.5ex}}
\renewcommand{\pb}{\mathbf{p}}
\newcommand{\nhan}[1]{{#1}}
\newcommand{\myeq}[2]{\vspace{-0.5ex}\begin{equation}\label{#1}#2\vspace{-0.5ex}\end{equation}}
\newcommand{\done}[1]{{#1}}
\title{Hybrid Stochastic Gradient Descent Algorithms for Stochastic Nonconvex Optimization}
\author{%
 Quoc Tran-Dinh$^{\dagger}$, Nhan H. Pham$^{\dagger}$, Dzung T. Phan$^{\ddagger}$, \textit{and} Lam M. Nguyen$^{\ddagger}$
 \vspace{0.75ex}\\
 $^{\dagger}$Department of Statistics and Operations Research\\
The University of North Carolina at Chapel Hill, Chapel Hill, NC27599, USA.\\
\texttt{quoctd@email.unc.edu, ~nhanph@live.unc.edu} \\
\and
 $^{\ddagger}$IBM Research, Thomas J. Watson Research Center\\
Yorktown Heights, NY10598, USA.\\
\texttt{phandu@us.ibm.com, ~LamNguyen.MLTD@ibm.com}
 }
\begin{document}

\maketitle

\begin{abstract}
\vspace{-2ex}
We introduce a hybrid stochastic estimator to design stochastic gradient algorithms for solving stochastic optimization problems.
Such a hybrid estimator is a convex combination of two existing biased and unbiased estimators and leads to some useful property on its variance.
We limit our consideration to a hybrid SARAH-SGD for nonconvex expectation problems.
However, our idea can be extended to handle a broader class of estimators in both convex and nonconvex settings.
We propose a new single-loop stochastic gradient descent algorithm that can achieve $\BigO{\max\set{\sigma^3\varepsilon^{-1},\sigma\varepsilon^{-3}}}$-complexity bound to obtain an $\varepsilon$-stationary point under smoothness and $\sigma^2$-bounded variance assumptions. This complexity is better than $\BigO{\sigma^2\varepsilon^{-4}}$ often obtained in state-of-the-art SGDs when $\sigma < \BigO{\varepsilon^{-3}}$.
We also consider different extensions of our method, including constant and adaptive step-size with single-loop, double-loop, and mini-batch variants.
We compare our algorithms with existing methods on several datasets using two nonconvex models.
\end{abstract}

\beforesec
\section{Introduction}\label{sec:intro}
\aftersec
Consider the following stochastic nonconvex optimization problem of the form:
\begin{equation}\label{eq:ncvx_prob}
\min_{x\in\R^p}\Big\{ f(x) := \Exps{\xi}{f(x;\xi)} \Big\},
\end{equation}
where $f(\cdot;\cdot) : \R^p\times \Omega \to \R$ is a stochastic function defined such that for each $x\in\R^p$, $f(x;\cdot)$ is a random variable in a given probability space $(\Omega, \Probn)$, while for each realization $\xi\in\Omega$, $f(\cdot;\xi)$ is smooth on $\R^p$; and $\Exps{\xi}{f(x;\xi)}$ is the expectation of $f(x;\xi)$ w.r.t. $\xi$ over $\Omega$.

\beforepara
\paragraph{Our goals and assumptions:}
Since \eqref{eq:ncvx_prob} is nonconvex, our goal in this paper is to develop a new class of stochastic gradient algorithms to find an $\varepsilon$-approximate stationary point $\widetilde{x}_T$ of \eqref{eq:ncvx_prob} such that $\Exp{\norms{\nabla{f}(\widetilde{x}_T)}^2} \leq \varepsilon^2$ under mild assumptions as stated in Assumption~\ref{as:A1}.

\begin{assumption}\label{as:A1} The objective function $f$ of \eqref{eq:ncvx_prob} satisfies the following conditions:
\vspace{-1ex}
\begin{itemize}
\item[$\mathrm{(a)}$] (\textbf{Boundedness from below}) There exists a finite lower bound $f^{\star} := \inf_{x\in\R^p}f(x) > -\infty$.

\vspace{-1.5ex}
\item[$\mathrm{(b)}$] (\textbf{$L$-average smoothness})
The function $f(\cdot;\xi)$ is $L$-average smooth on $\R^p$, i.e. there exists $L\in (0, +\infty)$ such that
\begin{equation}\label{eq:L_smooth}
\Exps{\xi}{\norm{\nabla{f}(x;\xi) - \nabla{f}(y;\xi)}^2} \leq L^2\norms{x - y}^2,~~\forall x, y\in\R^p.
\vspace{-1.5ex}
\end{equation}
\item[$\mathrm{(c)}$] (\textbf{Bounded variance})
There exists $\sigma \in (0, \infty)$ such that 
\vspace{1ex}
\begin{equation}\label{eq:bounded_variance2}
\Exps{\xi}{\norms{\nabla f(x;\xi) - \nabla{f}(x)}^2} \leq \sigma^2, ~~~\forall x\in\R^p.
\end{equation}
\end{itemize}
\end{assumption}
%
These assumptions are very standard in stochastic optimization methods \cite{ghadimi2013stochastic,Nemirovski2009a}. 
The $L$-average smoothness of $f$ is weaker than the smoothness of $f$ for each realization $\xi\in\Omega$.
Note that our methods described in the sequel are also applicable to the finite-sum problem $\min_{x} \set{f(x) = \frac{1}{n}\sum_{i=1}^nf_i(x)}$ as long as the above assumptions hold.
However, we do not specify our methods to solve this problem.
In this case, $\sigma$ in \eqref{eq:bounded_variance2} can be replaced by other alternatives, e.g., $\sigma_n^2 := \frac{1}{n}\sum_{i=1}^n\left[\norms{\nabla{f}_i(x)}^2 - \norms{\nabla{f}(x)}^2\right]$. 

\beforepara
\paragraph{Our key idea:}
Different from existing methods, we introduce a convex combination of  a biased and unbiased estimator of the gradient $\nabla{f}$ of $f$, which we call a \textit{hybrid stochastic gradient} estimator. 
While the biased estimator exploited in this paper is SARAH in \cite{Nguyen2017_sarah}, the unbiased one can be any unbiased estimator. 
SARAH is a recursive biased and variance reduced estimator for $\nabla{f}$.
Combining it with  an unbiased estimator allows us to reduce the bias and variance of the hybrid estimator.
In this paper, we only focus on the standard stochastic estimator as an unbiased candidate.

\beforepara
\paragraph{Related work:}
Under Assumption~\ref{as:A1}, problem \eqref{eq:ncvx_prob} covers a large number of applications in machine learning and data sciences. 
The stochastic gradient descent (SGD) method was first studied in \cite{RM1951}, and becomes extremely popular in recent years.
\cite{Nemirovski2009a} seems to be the first work showing the convergence rates of robust SGD variants in the convex setting, while \cite{Nemirovskii1983} provides an intensive complexity analysis for many optimization algorithms, including stochastic methods.
Variance reduction methods have also been widely studied, see, e.g. \cite{allen2016katyusha,chambolle2017stochastic,SAGA,johnson2013accelerating,nitanda2014stochastic,reddi2016stochastic,schmidt2017minimizing,shalev2013stochastic,Xiao2014}. 
In the nonconvex setting, \cite{ghadimi2013stochastic} seems to be the first algorithm achieving $\BigO{\sigma^2\varepsilon^{-4}}$-complexity bound.
Other researchers have also made significant progress in this direction, including \cite{allen2017natasha2,allen2018neon2,SVRG++,fang2018spider,lei2017non,Nguyen2018_iSARAH,Nguyen2019_SARAH,Nguyen2017_sarahnonconvex,reddi2016proximal,wang2018spiderboost,zhou2018stochastic}.
A majority of these works, including \cite{allen2017natasha2,allen2018neon2,SVRG++,lei2017non,reddi2016proximal}, rely on SVRG estimator in order to obtain better complexity bounds.
Hitherto, the complexity of SVRG-based methods remains worse than the best-known results, which is obtained in \cite{fang2018spider,pham2019proxsarah,wang2018spiderboost} via the SARAH estimator. 
However, as discussed in  \cite{pham2019proxsarah,wang2018spiderboost}, the method called SPIDER in  \cite{fang2018spider,lei2017non} does not practically perform well due to small step-size and its dependence on the reciprocal of the estimator's norm.
\cite{wang2018spiderboost} amends this issue by using a large constant step-size, but requires large mini-batch and does not consider  the single sample case and single loop variants.
\cite{pham2019proxsarah} provides a more general framework to treat composite problems where it covers \eqref{eq:ncvx_prob} as special case, but it does not consider the single loop as in SGDs.

\beforepara
\paragraph{Our contribution:}
To this end, our contribution can be summarized as follows:
\begin{itemize}
\vspace{-1.5ex}
\item[$\mathrm{(a)}$] 
We propose a hybrid stochastic estimator for a stochastic gradient of a nonconvex function $f$ in \eqref{eq:ncvx_prob} by combining the SARAH estimator from \cite{Nguyen2017_sarah} and any unbiased stochastic estimator such as SGD and SVRG. 
However, we only focus on the SGD estimator in this paper.
We prove some key properties of this hybrid estimator that can be used to design new algorithms.

\vspace{-0.75ex}
\item[$\mathrm{(b)}$] 
We exploit our hybrid estimator to develop a single-loop SGD algorithm that can achieve an $\varepsilon$-stationary point $\widetilde{x}_m$ such that $\Exp{\norms{\nabla{f}(\widetilde{x}_m)}^2} \leq \varepsilon^2$ in at most $\BigO{\sigma\varepsilon^{-3} + \sigma^3\varepsilon^{-1}}$ stochastic gradient evaluations.
This complexity significantly improves $\BigO{\sigma^2\varepsilon^{-4}}$ of SGD if $\sigma < \BigO{\varepsilon^{-3}}$. 
We extend our algorithm to a double loop variant, which requires $\BigO{\max\set{ \sigma\varepsilon^{-3}, \sigma^2\varepsilon^{-2}}}$ stochastic gradient evaluations.
This is the best-known complexity in the literature for stochastic gradient-type methods for solving \eqref{eq:ncvx_prob}.

\vspace{-0.75ex}
\item[$\mathrm{(c)}$] 
We also investigate other variants of our method, including adaptive step-sizes, and mini-batches.
In all these cases, our methods achieve the best-known complexity bounds.

\vspace{-1.5ex}
\end{itemize}
Let us emphasize the following points of our contribution.
Firstly, although our single-loop method requires three gradients per iteration compared to standard SGDs, it can achieves better complexity bound.
Secondly, it can be cast into a variance reduction method where it starts from a ``good'' approximation $v_0$ of $\nabla{f}(x^0)$, and aggressively reduces the variance.
Thirdly, our step-size is $\eta = \BigO{m^{-1/3}}$ which is larger than $\eta = \BigO{m^{-1/2}}$ in SGDs.
Fourthly, the step-size of the adaptive variant is increasing instead of diminishing as in SGDs.
Finally, our method achieves the same best-known complexity as in variance reduction methods studied in \cite{fang2018spider,pham2019proxsarah,wang2018spiderboost}.
We believe that our approach can be extended to other estimators such as SVRG \cite{johnson2013accelerating} and SAGA \cite{SAGA}, and can be used for Hessians to develop second-order methods as well as to solve convex and composite problems.

\beforepara
\paragraph{Paper organization:}
The rest of this paper is organized as follows.
Section~\ref{sec:stochastic_estimators} introduces our new hybrid stochastic estimator for the gradient of $f$ and investigates its properties.
Section~\ref{sec:algorithms} proposes a single-loop hybrid SGD-SARAH algorithm and its complexity analysis. 
It also considers a double-loop and mini-batch variants with rigorous complexity analysis.
Section~\ref{sec:num_experiments} provides two numerical examples to illustrate our methods and compares them with state-of-the-art methods.
All the proofs and additional experiments can be found in the Supplementary Document.

\textbf{Notation:} We work with Euclidean spaces, $\R^p$, equipped with standard inner product $\iprods{\cdot,\cdot}$ and norm $\norm{\cdot}$.
For a smooth function $f$ (i.e., $f$ is continuously differentiable), $\nabla{f}$ denotes its gradient.
We use $\Ub_{\pb}(\Sc)$ to denote a distribution on $\Sc$ with probability $\pb$.
If $\pb$ is uniform, then we simply use $\Ub(\Sc)$.
We also use $\BigO{\cdot}$ to present big-O notion in complexity theory, and $\sigma(\cdot)$ to denote a $\sigma$-field.

\beforesec
\section{Hybrid stochastic gradient estimators}\label{sec:stochastic_estimators}
\aftersec
%
In this section, we propose new stochastic estimators for the gradient of a smooth function $f$.

Let $u_t$ be an unbiased estimator of $\nabla{f}(x_t)$ formed by a realization $\zeta_t$ of $\xi$, i.e. $\Exps{\zeta_t}{u_t} = \nabla{f}(x_t)$.
We attempt to develop the following stochastic estimator for $\nabla{f}(x_t)$ in \eqref{eq:ncvx_prob}:
\begin{equation}\label{eq:v_estimator}
v_t :=  \beta_{t-1}v_{t-1} + \beta_{t-1}(\nabla{f}(x_t;\xi_t) - \nabla{f}(x_{t-1};\xi_t)) + (1-\beta_{t-1})u_t,
\end{equation}
where $\xi_t$ and $\zeta_t$ are two independent realizations of $\xi$ on $\Omega$.
Clearly, if $\beta_t = 0$, then we obtain a simple unbiased stochastic estimator, and $\beta_t = 1$, we obtain the SARAH estimator in \cite{Nguyen2017_sarah}.
We are interested in the case $\beta_t \in (0, 1)$, in which we call $v_t$ in \eqref{eq:v_estimator} a \textbf{hybrid stochastic} estimator.

Note that  we can rewrite $v_t$ as
\begin{equation*} 
v_t :=   \beta_{t-1}\nabla{f}(x_t;\xi_t) + (1-\beta_{t-1})u_t + \beta_{t-1}(v_{t-1} - \nabla{f}(x_{t-1};\xi_t)).
\end{equation*}
The first two terms are two stochastic gradients estimated at $x_t$, while the third term is the difference $v_{t-1} - \nabla{f}(x_{t-1};\xi_t)$ of the previous estimator and a stochastic gradient at the previous iterate.
Here, since $\beta_{t-1} \in (0, 1)$, the main idea is to exploit more recent information than the old ones.
In fact, the hybrid estimator $v_t$ covers many other estimators, including SGD, SVRG, and SARAH.
We can use one of the following two concrete unbiased estimators $u_t$ of $\nabla{f}(x_t)$ as follows:
\begin{itemize}
\vspace{-1.5ex}
\item \textbf{The SGD estimator:} $u_t := u_t^{\mathrm{sgd}} = \nabla{f}(x_t;\zeta_t)$. 
\item \textbf{The SVRG estimator:} $u_t := u_t^{\mathrm{svrg}} = \nabla{f}(\tilde{x}) + \nabla{f}(x_t;\zeta_t) - \nabla{f}(\tilde{x};\zeta_t)$, where $\nabla{f}(\tilde{x})$ is a full gradient evaluated at a given snapshot point $\tilde{x}$.
\vspace{-1.5ex}
\end{itemize}
However, for the sake of presentation, we only focus on the SGD estimator $u_t :=  u_t^{\mathrm{sgd}}$.

We first prove the following property of  the estimator $v_t$ showing how the variance is estimated.

\begin{lemma}\label{le:key_estimate10}
Let $v_t$ be defined by \eqref{eq:v_estimator}.
Then
\begin{equation}\label{eq:biased_estimator}
\Exps{(\xi_t,\zeta_t)}{v_t} = \nabla{f}(x_t) + \beta_{t-1}(v_{t-1} - \nabla{f}(x_{t-1})).
\end{equation}
If $\beta_{t-1} \neq 0$, then $v_t$ is a biased estimator.
Moreover, we have
\begin{equation}\label{eq:key_estimate10} 
\begin{array}{ll}
\Exps{(\xi_t,\zeta_t)}{\norms{v_t - \nabla{f}(x_t)}^2} &=  \beta_{t-1}^2\norms{v_{t-1}  - \nabla{f}(x_{t-1})}^2 - \beta_{t-1}^2\norms{\nabla{f}(x_{t-1}) - \nabla{f}(x_t)}^2 \vspace{1ex}\\
& + {~} \beta_{t-1}^2\Exps{\xi_t}{\norms{\nabla{f}(x_t;\xi_t) - \nabla{f}(x_{t-1};\xi_t)}^2} \vspace{1ex}\\
& + {~} (1-\beta_{t-1})^2\Exps{\zeta_t}{\norms{u_t - \nabla{f}(x_t)}^2}.
\end{array}
\end{equation}
\end{lemma}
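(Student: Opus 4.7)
The plan is to treat this as a direct computation that exploits the independence of the fresh samples $\xi_t,\zeta_t$ from the past. I would first fix the history up to step $t-1$ (so that $v_{t-1}$, $x_{t-1}$, and $x_t$ are deterministic when conditioning) and then take the conditional expectation of \eqref{eq:v_estimator}. Using that $\mathbb{E}_{\xi_t}[\nabla f(x;\xi_t)] = \nabla f(x)$ for any fixed $x$, and that $u_t$ is unbiased, the three terms collapse to $\beta_{t-1}v_{t-1} + \beta_{t-1}(\nabla f(x_t)-\nabla f(x_{t-1})) + (1-\beta_{t-1})\nabla f(x_t)$, and a quick rearrangement gives \eqref{eq:biased_estimator}; the bias is exactly $\beta_{t-1}(v_{t-1}-\nabla f(x_{t-1}))$, which is nonzero generically when $\beta_{t-1}\neq 0$.

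For the variance identity \eqref{eq:key_estimate10}, the key algebraic move is to decompose $v_t - \nabla f(x_t)$ into a deterministic part plus two independent, mean-zero pieces. Specifically, I would add and subtract $\beta_{t-1}\nabla f(x_{t-1})$ and $(1-\beta_{t-1})\nabla f(x_t)$ to rewrite
\begin{equation*}
v_t - \nabla f(x_t) = \beta_{t-1}\underbrace{(v_{t-1}-\nabla f(x_{t-1}))}_{=:P} + \beta_{t-1}\underbrace{\bigl[(\nabla f(x_t;\xi_t)-\nabla f(x_{t-1};\xi_t)) - (\nabla f(x_t)-\nabla f(x_{t-1}))\bigr]}_{=:Q} + (1-\beta_{t-1})\underbrace{(u_t-\nabla f(x_t))}_{=:R}.
\end{equation*}
Conditional on the history, $P$ is deterministic, while $Q$ depends only on $\xi_t$ with $\mathbb{E}_{\xi_t}[Q]=0$ and $R$ depends only on $\zeta_t$ with $\mathbb{E}_{\zeta_t}[R]=0$.

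Then I would expand the squared norm and take $\mathbb{E}_{(\xi_t,\zeta_t)}$. The three cross terms vanish: $\langle P,\mathbb{E}[Q]\rangle = \langle P,\mathbb{E}[R]\rangle = 0$ because $P$ is deterministic, and $\mathbb{E}[\langle Q,R\rangle]=0$ because $\xi_t$ and $\zeta_t$ are independent, so the expectation factors as $\langle \mathbb{E}_{\xi_t}[Q],\mathbb{E}_{\zeta_t}[R]\rangle = 0$. The term $\mathbb{E}_{\xi_t}[\|Q\|^2]$ is the variance of $\nabla f(x_t;\xi_t)-\nabla f(x_{t-1};\xi_t)$, which by the identity $\mathbb{E}\|X-\mathbb{E}X\|^2 = \mathbb{E}\|X\|^2 - \|\mathbb{E}X\|^2$ equals $\mathbb{E}_{\xi_t}\|\nabla f(x_t;\xi_t)-\nabla f(x_{t-1};\xi_t)\|^2 - \|\nabla f(x_t)-\nabla f(x_{t-1})\|^2$. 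Substituting these back gives exactly \eqref{eq:key_estimate10}.

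The only real subtlety is the bookkeeping of which randomness is being averaged out and why the cross term $\mathbb{E}[\langle Q,R\rangle]$ is zero; this relies crucially on the assumption that $\xi_t$ and $\zeta_t$ are drawn independently, a point worth stating explicitly. Otherwise the argument is just careful expansion, so I do not anticipate a hard obstacle beyond keeping the decomposition clean.
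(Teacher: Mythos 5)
Your proof is correct and follows essentially the same route as the paper: both start from the same decomposition of $v_t - \nabla f(x_t)$ and reduce the claim to the vanishing of cross terms under the independence of $\xi_t$ and $\zeta_t$, together with the identity $\mathbb{E}\|X\|^2 = \|\mathbb{E}X\|^2 + \mathbb{E}\|X-\mathbb{E}X\|^2$ applied to $\nabla f(x_t;\xi_t)-\nabla f(x_{t-1};\xi_t)$. The only difference is organizational: you group the two $\xi_t$-dependent terms into a single centered piece $Q$ so that all three cross terms vanish immediately by orthogonality, whereas the paper expands all six cross products explicitly and eliminates them in two stages (first taking $\mathbb{E}_{\xi_t}[\cdot\mid\zeta_t]$, then averaging over $\zeta_t$).
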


\begin{remark}
From \eqref{eq:v_estimator}, we can see that $v_t$ remains a biased estimator as long as $\beta_{t-1} \in (0, 1]$. 
Its biased term is 
\begin{equation*}
\mathrm{Bias}(v_t) = \norms{\Exps{(\xi_t,\zeta_t)}{v_t - \nabla{f}(x_t) \mid \Fc_t}} = \beta_{t-1}\norms{v_{t-1} - \nabla{f}(x_{t-1})} \leq \norms{v_{t-1} - \nabla{f}(x_{t-1})}.
\end{equation*}
%
This shows that the bias $v_t$ estimator is  smaller than the one in the SARAH estimator $v_t^{\textrm{sarah}} := v_{t-1}^{\textrm{sarah}} + \nabla{f}(x_t;\xi_t) - \nabla{f}(x_{t-1};\xi_t)$ from \cite{Nguyen2017_sarah}, which is $\mathrm{Bias}(v^{\mathrm{sarah}}_t) = \norms{v_{t-1}^{\mathrm{sarah}} - \nabla{f}(x_{t-1})}$.
\end{remark}
The following lemma bounds the second moment of $v_t - \nabla{f}(x_t)$ with $v_t$ defined in \eqref{eq:v_estimator}.

\begin{lemma}\label{le:upper_bound_new}
Assume that $f(\cdot,\cdot)$ is $L$-smooth and $u_t$ is an SGD estimator.
Then, we have the following upper bound on the variance $\Exp{\norms{v_t - \nabla{f}(x_t)}^2}$ of $v_t$:
\begin{equation}\label{eq:vt_variance_bound_new}
\Exp{\norms{v_t - \nabla{f}(x_t)}^2} \leq \omega_t\Exp{\norms{v_0 - \nabla{f}(x^0)}^2} + L^2\sum_{i=0}^{t-1}\omega_{i,t}\Exp{\norms{x_{i+1} - x_{i}}^2} + S_t,
\end{equation}
where the expectation is taking over all the randomness $\Fc_t := \sigma(v_0, v_1, \cdots, v_t)$, 
$\omega_{t} := \prod_{i=1}^{t}\beta_{i-1}^2$, $\omega_{i, t} := \prod_{j=i+1}^{t}\beta_{j-1}^2$ for $i=0,\cdots, t$, and $S_{t} := \sum_{i=0}^{t-1}\big(\prod_{j=i+2}^{t}\beta_{j-1}^2\big)(1-\beta_i)^2\sigma_{i+1}^2$ for $t \geq 0$.
\end{lemma}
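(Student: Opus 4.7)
\bigskip

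\noindent\textbf{Proof proposal for Lemma~\ref{le:upper_bound_new}.}

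The plan is to derive a one-step recursion for $A_t := \Exp{\norms{v_t - \nabla{f}(x_t)}^2}$ using the identity already established in Lemma~\ref{le:key_estimate10}, and then to unroll it all the way down to $A_0$.

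First, I would start from the conditional identity \eqref{eq:key_estimate10}. Two of its four terms can be simplified immediately using the standing assumptions. The third term, $\beta_{t-1}^2\Exps{\xi_t}{\norms{\nabla{f}(x_t;\xi_t) - \nabla{f}(x_{t-1};\xi_t)}^2}$, is bounded by $\beta_{t-1}^2 L^2\norms{x_t - x_{t-1}}^2$ by the $L$-average smoothness condition \eqref{eq:L_smooth}. The fourth term, $(1-\beta_{t-1})^2\Exps{\zeta_t}{\norms{u_t - \nabla{f}(x_t)}^2}$, is bounded by $(1-\beta_{t-1})^2 \sigma_t^2$ using the bounded variance assumption \eqref{eq:bounded_variance2} applied to the SGD estimator $u_t = \nabla f(x_t;\zeta_t)$ (with $\sigma_t$ allowed to depend on $t$ to accommodate mini-batch variants). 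The second term is nonpositive and can simply be dropped. Taking total expectation over $\Fc_t$ then yields the one-step recursion
\begin{equation*}
A_t \;\leq\; \beta_{t-1}^2 A_{t-1} \;+\; \beta_{t-1}^2 L^2\, \Exp{\norms{x_t - x_{t-1}}^2} \;+\; (1-\beta_{t-1})^2 \sigma_t^2.
\end{equation*}

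Next I would unroll this recursion from $t$ down to $0$. Writing $A_t \leq \beta_{t-1}^2 A_{t-1} + B_t$ with $B_t := \beta_{t-1}^2 L^2 \Exp{\norms{x_t - x_{t-1}}^2} + (1-\beta_{t-1})^2 \sigma_t^2$ and iterating gives
\begin{equation*}
A_t \;\leq\; \Big(\prod_{j=1}^{t}\beta_{j-1}^2\Big) A_0 \;+\; \sum_{i=1}^{t}\Big(\prod_{j=i+1}^{t}\beta_{j-1}^2\Big) B_i.
\end{equation*}
After the index shift $k=i-1$, the coefficient $\prod_{j=k+2}^{t}\beta_{j-1}^2$ multiplying the $L^2$-term combines with the extra $\beta_k^2$ from $B_{k+1}$ to produce exactly $\omega_{k,t} = \prod_{j=k+1}^{t}\beta_{j-1}^2$, while the coefficient in front of $(1-\beta_k)^2\sigma_{k+1}^2$ remains $\prod_{j=k+2}^{t}\beta_{j-1}^2$, giving precisely the $S_t$ term in the statement. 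The leading coefficient $\prod_{j=1}^{t}\beta_{j-1}^2$ matches $\omega_t$ by definition.

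There is no genuine obstacle here: both the smoothness bound and the variance bound apply in a textbook fashion, and the nonpositive term $-\beta_{t-1}^2\norms{\nabla f(x_{t-1}) - \nabla f(x_t)}^2$ is simply discarded. The only place one must be careful is the bookkeeping of the telescoping products, in particular tracking that the $\beta_k^2$ factor arising from the $L^2$-term in $B_{k+1}$ is absorbed into $\omega_{k,t}$ whereas the $(1-\beta_k)^2$ factor is not; matching the indices $i$, $i+1$, $i+2$ in the statement to the unrolled recursion is the only step that requires attention. Once that is done, the bound \eqref{eq:vt_variance_bound_new} follows directly.
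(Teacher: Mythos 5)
Your proposal is correct and follows essentially the same route as the paper's own proof: drop the nonpositive term in \eqref{eq:key_estimate10}, bound the remaining two terms via \eqref{eq:L_smooth} and the variance of $u_t$, and unroll the resulting one-step recursion, with the index bookkeeping matching $\omega_t$, $\omega_{i,t}$, and $S_t$ exactly as in the paper's inductive expansion.
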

Lemmas~\ref{le:key_estimate10} and \ref{le:upper_bound_new} provides two key properties to develop stochastic algorithm in Section~\ref{sec:algorithms}.

\beforesec
\section{Hybrid SARAH-SGD algorithms}\label{sec:algorithms}
\aftersec
In this section, we utilize our hybrid stochastic estimator $v_t$ in \eqref{eq:v_estimator} to develop stochastic gradient methods for solving \eqref{eq:ncvx_prob}. 
We consider three different variants using the hybrid SARAH-SGD estimator.

\beforesubsec
\subsection{The generic algorithm framework}
\aftersubsec
Using  $v_t$ defined by \eqref{eq:v_estimator}, we can develop a new algorithm for solving \eqref{eq:ncvx_prob} as in Algorithm~\ref{alg:A1}.

\begin{algorithm}[ht!]\caption{(Hybrid  stochastic gradient descent (Hybrid-SGD) algorithm)}\label{alg:A1}
\normalsize
\begin{algorithmic}[1]
   \State{\bfseries Initialization:} An initial point $x^0$ and parameters $b$, $\beta_t$, and $\eta_t$ (will be specified).
   \vspace{0.5ex}
   \State\hspace{0ex}\label{step:o2} Generate an unbiased estimator $v_0 := \frac{1}{b}\sum_{\hat{\xi}_i\in\Bc}\nabla{f}(x_0;\hat{\xi}_i)$ at $x_0$ using a mini-batch $\Bc$.
   \vspace{0.5ex}   
   \State\hspace{0ex}\label{step:o3} Update $x_1 := x_0 - \eta_0v_0$.
   \vspace{0.5ex}   
   \State\hspace{0ex}\label{step:o4}{\bfseries For $t := 1,\cdots,m$ do}
   \vspace{0.5ex}   
   \State\hspace{3ex}\label{step:i1} Generate a proper sample pair $(\xi_t, \zeta_t)$ independently (single sample or mini-batch).
   \vspace{0.5ex}   
   \State\hspace{3ex}\label{step:i2} Evaluate $v_{t}  := \beta_{t-1}v_{t-1}  + \beta_{t-1}\big(\nabla{f}(x_{t};\xi_t) - \nabla{f}(x_{t-1};\xi_t)\big) + (1-\beta_{t-1})\nabla{f}(x_{t}; \zeta_t)$.
   \State\hspace{3ex}\label{step:i3} Update $x_{t+1} :=x_{t} - \eta_t v_{t}$.
   \vspace{0.5ex}   
   \State\hspace{0ex}{\bfseries EndFor}
   \vspace{0.5ex}   
   \State\hspace{0ex}\label{step:o5} Choose $\widetilde{x}_m$ from $\set{x_0, x_1, \cdots, x_m}$ (at random or deterministic, specified later). 
\end{algorithmic}
\end{algorithm}
\vspace{-1ex}

Algorithm~\ref{alg:A1} looks essentially the same as any SGD scheme with only one loop.
The differences are at Step~\ref{step:o2} with a mini-batch estimator $v_0$ and at Step~\ref{step:i2}, where we use our hybrid gradient estimator $v_t$.
In addition, we will show in the sequel that it uses different step-sizes and leads to different variants.
Unlike the inner loop of SARAH or SVRG, each iteration of Algorithm~\ref{alg:A1} requires three individual gradient evaluations instead of two as in these methods.
The snapshot at Step~\ref{step:o2} of Algorithm~\ref{alg:A1} relies on a mini-batch $\Bc$ of the size $b$, which is independent of $(\xi_t,\zeta_t)$ in the loop $t$.

\beforesubsec
\subsection{Convergence analysis}
\aftersubsec
We analyze two cases: constant step-size and adaptive step-size. In both cases, $\beta_t$ is fixed for all $t$.

\beforesubsubsec
\subsubsection{Convergence of Algorithm~\ref{alg:A1} with constant step-size $\eta$ and constant $\beta$}
\aftersubsubsec
Assume that we run Algorithm~\ref{alg:A1} within $m$ iterations $m\geq 1$.
In this case, given $0 < c_1 < \sqrt{b(m+1)}$, we choose $\eta$ and $\beta$ in Algorithm~\ref{alg:A1} as follows:
\begin{equation}\label{eq:step_size1}
\eta := \frac{2}{L(\sqrt{1 + 4\alpha_m^2} + 1)}~~~~~~\text{with}~~~~~\beta := 1 - \frac{c_1}{\sqrt{b(m+1)}}~~~~\text{and}~~~~\alpha_m^2 :=  \frac{\beta^2(1-\beta^{2m})}{1-\beta^2}.
\end{equation}
The following theorem estimates the complexity of Algorithm~\ref{alg:A1} to approximate an $\varepsilon$-stationary point of \eqref{eq:ncvx_prob}, whose proof is given in Subsection~\ref{apdx:th:singe_loop_const_step} of the supplementary document.

\begin{theorem}\label{th:singe_loop_const_step}
Let $\sets{x_t}$ be the sequence generated by Algorithm~\ref{alg:A1} using the step-size $\eta$ defined by \eqref{eq:step_size1}.
Let us choose $\widetilde{x}_m\sim\Ub(\sets{x_t}_{t=0}^m)$.
Then
\begin{itemize}
\vspace{-1ex}
\item[$(\mathrm{a})$] The step-size $\eta$ satisfies $\eta \geq \underline{\eta} :=  \frac{2\sqrt{c_1}}{3\nhan{L}\big[ b(m+1)\big]^{1/4}}$. 
In addition, we have
\begin{equation}\label{eq:single_SGD} 
\Exp{\norms{\nabla{f}(\widetilde{x}_m)}^2} \leq \frac{3b^{1/4}L\big[f(x^0) - f^{\star}\big]}{\sqrt{c_1}(m+1)^{3/4}} + \left(c_1 + \frac{1}{c_1}\right)\frac{ \sigma^2}{\sqrt{b(m+1)}}.
\end{equation}
\item[$(\mathrm{b})$] If we choose  $b :=  c_2\sigma^{8/3} (m + 1)^{1/3}$ for any $c_2 > 0$, then to guarantee $\Exp{\norms{\nabla{f}(\widetilde{x}_m)}^2} \leq \varepsilon^2$, we need to choose 
\begin{equation}\label{eq:choice_of_m}
m := \bigg\lfloor\tfrac{\sigma}{\varepsilon^3}\Big[\tfrac{3 L c_2^{1/4}}{\sqrt{c_1}}\big[f(x^0) - f^{\star}\big] + \left(c_1 + \tfrac{1}{c_1}\right)\tfrac{1}{\sqrt{c_2}}\Big]^{3/2}\bigg\rfloor = \BigO{\frac{\sigma}{\varepsilon^3}}.
\end{equation}
In particular, if we choose $c_1 = 1$, then the number of oracle calls is $\Tc_{ge}$ is
\begin{equation}\label{eq:overall_complexity2}
{\!\!\!\!\!\!\!\!}\begin{array}{ll}
\Tc_{ge} &:=  \dfrac{\sigma^3}{\varepsilon}\left[ 3 L c_2^{9/4}\big[f(x^0) - f^{\star}\big] + 2c_2^{3/2}\right]^{1/2} {\!\!\!} +  \dfrac{3\sigma}{\varepsilon^3}\left[ 3 L c_2^{1/4}\big[f(x^0) - f^{\star}\big] + \frac{2}{\sqrt{c_2}}\right]^{3/2} \vspace{1ex}\\
&= \BigO{\dfrac{\sigma^3}{\varepsilon}+ \dfrac{\sigma}{\varepsilon^3}}.
\end{array}{\!\!\!\!\!\!\!\!\!}
\end{equation}
Moreover, the step-size $\eta$ satisfies $\eta \geq \underline{\eta} := \frac{2}{3\nhan{L}c_2^{1/4}\sigma^{2/3}(m+1)^{1/3}} = \BigO{m^{-1/3}}$.
\end{itemize}
\end{theorem}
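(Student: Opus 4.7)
The plan is to start from the descent lemma for the $L$-smooth function $f$ applied to $x_{t+1}=x_t-\eta v_t$, split the inner product via the identity $-\iprods{\nabla f(x_t),v_t}=\tfrac{1}{2}(\norms{v_t-\nabla f(x_t)}^2-\norms{\nabla f(x_t)}^2-\norms{v_t}^2)$, and take expectations. This yields the one-step inequality
\myeqn{
\Exp{f(x_{t+1})} \leq \Exp{f(x_t)} - \tfrac{\eta}{2}\Exp{\norms{\nabla f(x_t)}^2} - \tfrac{\eta}{2}(1-L\eta)\Exp{\norms{v_t}^2} + \tfrac{\eta}{2}\Exp{\norms{v_t-\nabla f(x_t)}^2}.
}
I would then telescope from $t=0$ to $m$ and combine with the variance bound from Lemma~\ref{le:upper_bound_new}. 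With $\beta_t\equiv\beta$ constant, the weights simplify to $\omega_t=\beta^{2t}$, $\omega_{i,t}=\beta^{2(t-i)}$, and $S_t=(1-\beta)^2\sigma^2(1-\beta^{2t})/(1-\beta^2)$, while $\Exp{\norms{v_0-\nabla f(x_0)}^2}\leq\sigma^2/b$ from the mini-batch at step~\ref{step:o2}, and $\norms{x_{i+1}-x_i}^2=\eta^2\norms{v_i}^2$.

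The key algebraic step is exchanging the order of the double sum $\sum_{t=0}^m\sum_{i=0}^{t-1}\beta^{2(t-i)}\Exp{\norms{v_i}^2}$ and bounding its inner geometric series uniformly by $\alpha_m^2=\beta^2(1-\beta^{2m})/(1-\beta^2)$, so that the accumulated variance contributes at most $L^2\eta^2\alpha_m^2\sum_{t=0}^m\Exp{\norms{v_t}^2}$. The choice $\eta=2/(L(\sqrt{1+4\alpha_m^2}+1))$ is exactly the positive root of $L^2\eta^2\alpha_m^2+L\eta-1=0$, so combining with the $-\tfrac{\eta}{2}(1-L\eta)\norms{v_t}^2$ terms makes the entire $\Exp{\norms{v_t}^2}$ summation vanish. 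What remains, after dividing by $\eta(m+1)/2$ and using $\widetilde{x}_m\sim\Ub(\{x_t\})$, is
\myeqn{
\Exp{\norms{\nabla f(\widetilde{x}_m)}^2} \leq \tfrac{2(f(x^0)-f^{\star})}{\eta(m+1)} + \tfrac{\sigma^2}{b(m+1)(1-\beta^2)} + \tfrac{(1-\beta)\sigma^2}{1+\beta}.
}
Substituting $\beta=1-c_1/\sqrt{b(m+1)}$ collapses the last two terms into $(c_1+1/c_1)\sigma^2/\sqrt{b(m+1)}$, using the assumption $c_1<\sqrt{b(m+1)}$ to control the factor $2-c_1/\sqrt{b(m+1)}$ in both denominators.

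The lower bound $\eta\geq\underline{\eta}$ follows by controlling $\alpha_m^2\leq 1/(1-\beta^2)\leq 1/(1-\beta)=\sqrt{b(m+1)}/c_1$, then applying $\sqrt{1+x}\leq 1+\sqrt{x}$ to get $\sqrt{1+4\alpha_m^2}+1\leq 2+2[b(m+1)]^{1/4}/\sqrt{c_1}\leq 3[b(m+1)]^{1/4}/\sqrt{c_1}$ (once $b(m+1)$ is large enough relative to $c_1$, which is the operating regime). Plugging $\underline\eta$ into the first term produces the $3Lb^{1/4}(f(x^0)-f^{\star})/(\sqrt{c_1}(m+1)^{3/4})$ term of \eqref{eq:single_SGD}.

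For part (b), I would substitute $b=c_2\sigma^{8/3}(m+1)^{1/3}$ directly into \eqref{eq:single_SGD}; both bias and variance terms collapse to $\sigma^{2/3}(m+1)^{-2/3}$ times a constant $K(c_1,c_2,L,f(x^0)-f^{\star})$, so setting this $\leq\varepsilon^2$ yields $m=\BigO{K^{3/2}\sigma/\varepsilon^3}$. The total oracle count is $b+3m$ (the $b$ samples at initialization plus three gradients per iteration at step~\ref{step:i2}); with $c_1=1$, substituting $m$ gives $b=\BigO{\sigma^3/\varepsilon}$ and $3m=\BigO{\sigma/\varepsilon^3}$, matching \eqref{eq:overall_complexity2}. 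The step-size bound $\eta=\BigO{m^{-1/3}}$ follows from $\underline\eta$ after substituting $b$. The main obstacle is the tight bookkeeping in the second paragraph: exchanging the double sum, verifying that $\alpha_m^2$ uniformly bounds the inner geometric sums, and showing that the quadratic equation defining $\eta$ causes exact cancellation of the $\norms{v_t}^2$ terms — everything downstream is a substitution.
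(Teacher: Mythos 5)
Your proposal is correct and follows essentially the same route as the paper's proof: the same descent-lemma decomposition, the same exchange of the double sum with the inner geometric series bounded by $\alpha_m^2$, the same quadratic equation $L^2\eta^2\alpha_m^2 + L\eta - 1 = 0$ defining $\eta$ so that the $\Exp{\norms{v_t}^2}$ terms can be dropped, and the same substitutions for part (b) with $\Tc_{ge} = b + 3m$. The only (immaterial) deviation is in the lower bound on $\eta$: the paper bounds $\sqrt{1+4\alpha_m^2}+1 \leq \big(\sqrt{1-\beta^2}+\sqrt{1+3\beta^2}\big)/\sqrt{1-\beta^2} \leq 3/\sqrt{1-\beta^2}$ and uses $1-\beta^2 > c_1/\sqrt{b(m+1)}$, which needs only the stated hypothesis $c_1 < \sqrt{b(m+1)}$, whereas your chain via $\alpha_m^2 \leq 1/(1-\beta)$ requires the slightly stronger condition $b(m+1)\gtrsim 16c_1^2$ that you correctly flag as the operating regime.
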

%
%
Here, $\Tc_{ge}$ stands for the number of stochastic gradient evaluations of $f$ in \eqref{eq:ncvx_prob}.
The complexity $\Tc_{ge}$ in \eqref{eq:overall_complexity2} can be written as $\Tc_{ge} = \BigO{\max\set{\sigma\varepsilon^{-3}, \sigma^3\varepsilon^{-1}}}$.
If $\sigma < \BigO{\frac{1}{\varepsilon}}$, then our complexity is $\Tc_{ge} = \BigO{\sigma\varepsilon^{-3}}$.
Even if $\sigma < \BigO{\frac{1}{\varepsilon^3}}$, then our complexity is still better than $\BigO{\sigma^2\varepsilon^{-4}}$ in SGD.

\beforesubsubsec
\subsubsection{Convergence of Algorithm~\ref{alg:A1} with adaptive step-size $\eta_t$ and constant $\beta$}
\aftersubsubsec
Let $\beta := 1 - \frac{c_1}{\sqrt{b(m+1)}} \in (0, 1)$ be fixed for some $0 < c_1 < \sqrt{b(m+1)}$.
Instead of fixing step-size $\eta_t$ as in \eqref{eq:step_size1}, we can update it adaptively as 
\begin{equation}\label{eq:update_of_eta_t}
\eta_m := \frac{1}{L},~~\text{and}~~\eta_t := \frac{1}{L + L^2\big[\beta^2\eta_{t+1} + \beta^4\eta_{t+2} + \cdots + \beta^{2(m-t)}\eta_m\big]}~~\text{for}~t=0,\cdots, m-1.
\end{equation}
It can be shown that $0 < \eta_0 < \eta_1 < \cdots < \eta_m$.
Interestingly, our step-size is updated in an increasing manner instead of diminishing as in existing SGD-type methods.
Moreover, given $m$, we can pre-compute the sequence of these step-sizes $\set{\eta_t}_{t=0}^m$ in advance within $\BigO{m}$ basic operations.
Therefore, it does not significantly incur the computational cost of our method.

The following theorem states the convergence of Algorithm~\ref{alg:A1} under the adaptive update \eqref{eq:update_of_eta_t}, whose proof is given in Subsection~\ref{apdx:th:singe_loop_adapt_step} of the supplementary document.

\begin{theorem}\label{th:singe_loop_adapt_step}
Let $\sets{x_t}$ be the sequence generated by Algorithm~\ref{alg:A1} using the step-size $\eta_t$ defined by \eqref{eq:update_of_eta_t}.
Let $\Sigma_m := \sum_{t=0}^m\eta_t$, and $\widetilde{x}_m\sim\Ub_{\pb}(\sets{x_t}_{t=0}^m)$ with $\pb_t := \Prob{\widetilde{x}_m = x_t} = \frac{\eta_t}{\Sigma_m}$.
Then
\begin{itemize}
\vspace{-1ex}
\item[$(\mathrm{a})$] The sum $\Sigma_m$ is bounded from below as $\Sigma_m \geq  \frac{\sqrt{c_1}(m+1)^{3/4}}{2L b^{1/4}}$.
\vspace{-1ex}
\item[$(\mathrm{b})$] If we choose  $b :=  c_2\sigma^{8/3} (m + 1)^{1/3}$ for any $c_2 > 0$, then to guarantee $\Exp{\norms{\nabla{f}(\widetilde{x}_m)}^2} \leq \varepsilon^2$, we need to choose $m := \Big\lfloor\frac{\sigma}{\varepsilon^3}\Big[\frac{3 L c_2^{1/4}}{\sqrt{c_1}}\big[f(x^0) - f^{\star}\big] + \big(c_1 + \frac{1}{c_1}\big)\frac{1}{\sqrt{c_2}}\Big]^{3/2}\Big\rfloor = \BigO{\frac{\sigma}{\varepsilon^3}}$.
Therefore, the number of stochastic gradient evaluations $\Tc_{ge}$ is at most the same as in \eqref{eq:overall_complexity2}.
\end{itemize}
\end{theorem}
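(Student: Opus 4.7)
The plan is to handle part (a) through a fixed-point analysis of the step-size recursion \eqref{eq:update_of_eta_t}, and part (b) through the standard descent lemma together with a cancellation that is built into the adaptive rule and a final Chebyshev sum inequality to control the residual variance.

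For part (a), I would introduce $y_t := 1/(L\eta_t)$, so that \eqref{eq:update_of_eta_t} reads $y_t = 1 + \sum_{j=t+1}^{m}\beta^{2(j-t)}/y_j$ with $y_m = 1$. Splitting off the $j = t+1$ term and substituting for $y_{t+1} - 1$ telescopes this into the one-step recursion $y_t = g(y_{t+1})$ with $g(y) := 1 - \beta^2 + \beta^2 y + \beta^2/y$. Since $g'(y) = \beta^2(1 - 1/y^2) \ge 0$ on $[1,\infty)$, $g$ is monotone there and has the unique fixed point $y^\star := \tfrac{1}{2}\bigl(1 + \sqrt{1 + 4\beta^2/(1-\beta^2)}\bigr)$. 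Backward induction starting from $y_m = 1 \le y^\star$ then yields $y_t \le y^\star$ for every $t$, hence $\eta_t \ge 1/(L y^\star)$. Using $\sqrt{1+x} \le 1 + \sqrt{x}$ gives $y^\star \le 1 + \beta/\sqrt{1-\beta^2}$, and combining with $1-\beta^2 \ge 1-\beta = c_1/\sqrt{b(m+1)}$ produces $y^\star = O((b(m+1))^{1/4}/\sqrt{c_1})$, which yields $\Sigma_m \ge (m+1)/(L y^\star) \ge \sqrt{c_1}(m+1)^{3/4}/(2 L b^{1/4})$.

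For part (b), I would apply the descent inequality $f(x_{t+1}) \le f(x_t) - \tfrac{\eta_t}{2}\|\nabla f(x_t)\|^2 - \tfrac{\eta_t(1-L\eta_t)}{2}\|v_t\|^2 + \tfrac{\eta_t}{2}\|v_t - \nabla f(x_t)\|^2$, sum from $t=0$ to $m$, take expectation, and substitute the bound of Lemma \ref{le:upper_bound_new} with $\beta_t \equiv \beta$. Swapping the order of summation in the $\|x_{i+1} - x_i\|^2 = \eta_i^2\|v_i\|^2$ contributions produces the factor $L^2\sum_{t>i}\beta^{2(t-i)}\eta_t$, which by construction of \eqref{eq:update_of_eta_t} equals $1/\eta_i - L$; these terms thus collapse to $\sum_i \eta_i(1-L\eta_i)\|v_i\|^2$ and cancel exactly against the negative $\|v_i\|^2$ term from the descent inequality. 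What remains is $\sum_t \eta_t \mathbb{E}\|\nabla f(x_t)\|^2 \le 2[f(x_0) - f^\star] + (\sigma^2/b)\sum_t \eta_t \beta^{2t} + \sum_t \eta_t S_t$, with $S_t \le (1-\beta)\sigma^2/(1+\beta)$.

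At this point the crucial observation is that $\{\eta_t\}_{t=0}^m$ is nondecreasing (immediate from \eqref{eq:update_of_eta_t}) while $\{\beta^{2t}\}$ is nonincreasing, so Chebyshev's sum inequality gives $\sum_t \eta_t \beta^{2t} \le \Sigma_m \sum_t \beta^{2t}/(m+1) \le \Sigma_m/[(m+1)(1-\beta^2)]$. Dividing the summed descent bound by $\Sigma_m$ and exploiting the distribution $p_t = \eta_t/\Sigma_m$ for $\widetilde{x}_m$, then invoking $1-\beta^2 \ge c_1/\sqrt{b(m+1)}$ and the part-(a) lower bound on $\Sigma_m$, one recovers an inequality of the same structure as \eqref{eq:single_SGD}; the complexity calculation with $b = c_2 \sigma^{8/3}(m+1)^{1/3}$ then proceeds verbatim from the proof of Theorem \ref{th:singe_loop_const_step}(b). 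The hardest step will be part (a): the direct bound obtained by plugging $\eta_j \le 1/L$ into $B_t$ is loose by a factor $(m+1)^{1/3}$ under the parameter scaling, and only the nonlinear fixed-point comparison through the monotone map $g$ produces the correct $(m+1)^{3/4}$ rate; in part (b), the Chebyshev step is the non-routine ingredient that keeps the $v_0$-variance contribution at $O(\sigma^2/\sqrt{b(m+1)})$ rather than the substantially worse $O(\sigma^2/(b^{1/4}(m+1)^{1/4}))$ that a crude bound would give.
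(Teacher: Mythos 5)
Your argument is correct, and part (b) follows the paper's route essentially verbatim: the same descent inequality, the same observation that the defining relation $L^2\sum_{t=i+1}^m\beta^{2(t-i)}\eta_t = \tfrac{1}{\eta_i}-L$ makes the accumulated $\Exp{\norms{v_i}^2}$ terms cancel exactly against $-\eta_i(1-L\eta_i)\Exp{\norms{v_i}^2}$ (this is the paper's condition \eqref{eq:cond_of_eta3} imposed with equality), the same Chebyshev bound $\sum_t\eta_t\beta^{2t}\leq \Sigma_m/[(m+1)(1-\beta^2)]$, and the same final arithmetic leading to \eqref{eq:key_estimate_302}. Where you genuinely depart from the paper is part (a). The paper proves the lower bound on $\Sigma_m$ through Lemma~\ref{le:adaptive_step_size}: it sums the $m+1$ identities \eqref{eq:estimate2}, applies Chebyshev's sum inequality twice and Cauchy--Schwarz, and solves a quadratic inequality in $\Sigma_m$ — an aggregate argument that only controls the sum. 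You instead set $y_t:=1/(L\eta_t)$, telescope the recursion into the one-step map $y_t=g(y_{t+1})$ with $g(y)=1-\beta^2+\beta^2y+\beta^2/y$, and use monotonicity of $g$ on $[1,\infty)$ together with backward induction from $y_m=1\leq y^{\star}$ to conclude $\eta_t\geq 1/(Ly^{\star})$ for every $t$; I checked the telescoping identity, the fixed-point value $y^{\star}=\tfrac12\bigl(1+\sqrt{1+4\beta^2/(1-\beta^2)}\bigr)$, and the final comparison $2b^{1/4}(m+1)^{1/4}\geq \sqrt{c_1}+(b(m+1))^{1/4}$ (which holds precisely because $c_1<\sqrt{b(m+1)}$), and all are sound. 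Your route is more elementary and buys strictly more: a uniform per-iteration lower bound $\eta_t\geq 1/(Ly^{\star})$, which mirrors the constant-step lower bound $\underline{\eta}$ of Theorem~\ref{th:singe_loop_const_step} (note $y^{\star}$ is exactly $1/(L\eta)$ for \eqref{eq:step_size1} with $\alpha_m^2$ replaced by its limit $\beta^2/(1-\beta^2)$), whereas the paper's Chebyshev argument only delivers the bound on $\Sigma_m$ itself. Your closing diagnosis — that the naive bound $\eta_j\leq 1/L$ in the denominator loses a factor $(b(m+1))^{1/4}=\BigO{(m+1)^{1/3}}$ under the scaling $b=c_2\sigma^{8/3}(m+1)^{1/3}$ — is also accurate.
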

%
%
%
Note that in the finite sum case, i.e. $\vert\Omega\vert = n$, we set $b := \min \{n, c_2\sigma^{8/3} (m + 1)^{1/3}\}$ in both Theorems~\ref{th:singe_loop_const_step} and \ref{th:singe_loop_adapt_step}.
This complexity remains the same as in Theorem~\ref{th:singe_loop_const_step}.
However, the adaptive stepsize $\eta_t$ potentially gives a better performance in practice as we will see in Section~\ref{sec:num_experiments}.

Algorithm~\ref{alg:A1} can be considered as a single-loop variance reduction method, which is similar to SAGA \cite{SAGA}, but Algorithm~\ref{alg:A1} aims at solving the nonconvex problem \eqref{eq:ncvx_prob}.
It is different from standard SGD methods, where it can be initialized by a mini-batch and then update the estimator using three individual gradients.
Therefore, it has the same cost as SGD with mini-batch of size $3$.
As a compensation, we obtain an improvement on the complexity bound as in Theorems~\ref{th:singe_loop_const_step} and \ref{th:singe_loop_adapt_step}.

\beforesubsec
\subsection{Convergence analysis of the double loop variant}
\aftersubsec
Since the step-size $\eta_t$ depends on $m$, it is natural to run Algorithm~\ref{alg:A1} with multiple stages.
This leads to a double-loop algorithm as SVRG, SARAH, and SPIDER, where Algorithm~\ref{alg:A1} is restarted at each outer iteration $s$.
The detail of this variant is described in Algorithm~\ref{alg:A2}.

\vspace{-1ex}
\begin{algorithm}[ht!]\caption{(Double-loop HSGD algorithm)}\label{alg:A2}
\normalsize
\begin{algorithmic}[1]
   \State{\bfseries Initialization:} An initial point $\widetilde{x}^0$  and parameters $b$, $m$, $\beta_t$, and $\eta_t$ (will be specified).
   \vspace{0.5ex}
   \State{\bfseries OuterLoop:}~{\bfseries For $s := 1, 2, \cdots, S$ do}
   \vspace{0.5ex}   
   \State\hspace{3ex}\label{step:o2} Run Algorithm~\ref{alg:A1} with an initial point $x_0^{(s)} := \widetilde{x}^{(s-1)}$.
   \vspace{0.5ex}   
   \State\hspace{3ex}\label{step:o5} Set $\widetilde{x}^{(s)} := x_{m+1}^{(s)}$ as the last iterate of Algorithm~\ref{alg:A1}. 
   \vspace{0.5ex}   
   \State{\bfseries EndFor}
\end{algorithmic}
\end{algorithm}
\vspace{-1ex}

To analyze Algorithm~\ref{alg:A2}, we use $x^{(s)}_t$ to represent the iterate of Algorithm~\ref{alg:A1} at  the $t$-th inner iteration within each stage $s$.
From  \eqref{eq:single_loop2}, we can see that each stage $s$, the following estimate holds
\begin{equation*} 
\frac{\eta}{2}\displaystyle\sum_{t=0}^m\Exp{\norms{\nabla{f}(x_t^{(s)})}^2} \leq  \Exp{f(x_0^{(s)})} - \Exp{f(x_{m+1}^{(s)})}  + \frac{\eta\sigma^2\sqrt{m+1}}{(1+\beta)\sqrt{b}}.
\end{equation*}
Here, we assume that we fix the step-size $\eta_t = \eta > 0$ for simplicity of analysis.
The complexity of Algorithm~\ref{alg:A2} is given in the following theorem, whose proof is in Supplementary Document~\ref{apdx:th:double_loop_convergence}.

\begin{theorem}\label{th:double_loop_convergence}
Let $\sets{x^{(s)}_t}_{t=0\to m}^{s=1\to S}$ be the sequence generated by Algorithm~\ref{alg:A2} using constant step-size $\eta$ in \eqref{eq:step_size1}.
Then, the following estimate holds
\begin{equation}\label{eq:double_loop_est} 
\frac{1}{S(m+1)}\displaystyle\sum_{s=1}^S\sum_{t=0}^m\Exp{\norms{\nabla{f}(x_t^{(s)})}^2} \leq  \frac{3 L b^{1/4}}{S (m+1)^{3/4}} \big[f(\widetilde{x}^0) - f^{\star}\big] + \frac{2 \sigma^2}{\sqrt{b (m+1)}}.
\end{equation}
Let $\widetilde{x}_T \sim \Ub(\sets{x^{(s)}_t}_{t=0\to m}^{s=1\to S})$.
If we choose $b := \frac{c_1\sigma^2}{\varepsilon^2}$ and $m + 1 :=  \frac{c_2\sigma^2}{\varepsilon^2}$ for some constants $c_1 > 0$ and $c_2 > 0$ and $c_1c_2 > 4$, then, to guarantee $\Exp{\norms{\nabla{f}(\widetilde{x}_T)}^2} \leq \varepsilon^2$, we require at most
\begin{equation}\label{eq:S_iterations}
S := \Bigg\lfloor \frac{3 L c_1^{1/4} \big[f(\widetilde{x}^0) - f^{\star}\big]}{c_2^{3/4} \sigma \left( 1 - \frac{2}{\sqrt{c_1 c_2}} \right) \varepsilon}\Bigg\rfloor ~~~\text{outer iterations}.
\end{equation}
Consequently,  the total number of stochastic gradient evaluations $\Tc_{ge}$ does not exceed 
\begin{equation}\label{eq:Toc_double_loop}
\Tc_{ge} := (b + 3m)S  = \frac{3 L (c_1 + 3 c_2)c_1^{1/4} \big[f(\widetilde{x}^0) - f^{\star}\big]\sigma}{c_2^{3/4}\left( 1 - \frac{2}{\sqrt{c_1 c_2}} \right) \varepsilon^3}  = \BigO{\frac{\sigma}{\varepsilon^3}}. 
\end{equation}
\end{theorem}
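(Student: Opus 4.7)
The plan is to bootstrap the per-stage descent inequality displayed just before the theorem, then telescope across the $S$ outer iterations, plug in the single-loop step-size bound, and finally tune $(b,m,S)$ to hit the target accuracy $\varepsilon$.

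First I would treat the inequality
\[
\frac{\eta}{2}\sum_{t=0}^m \Exp{\norms{\nabla f(x_t^{(s)})}^2} \le \Exp{f(x_0^{(s)})} - \Exp{f(x_{m+1}^{(s)})} + \frac{\eta\sigma^2\sqrt{m+1}}{(1+\beta)\sqrt{b}}
\]
as the primitive estimate for each stage $s$ (this is exactly the per-stage bound inherited from the analysis of Algorithm~\ref{alg:A1} with constant step-size). Since in Algorithm~\ref{alg:A2} we initialize $x_0^{(s)} = \widetilde{x}^{(s-1)} = x_{m+1}^{(s-1)}$, summing this inequality over $s = 1,\dots,S$ produces a telescoping sum on the right-hand side, collapsing to $f(\widetilde{x}^0) - \Exp{f(\widetilde{x}^{(S)})}$. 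Using $f(\widetilde{x}^{(S)}) \ge f^\star$, dividing by $\frac{\eta S(m+1)}{2}$, and bounding $(1+\beta) \ge 1$, I obtain
\[
\frac{1}{S(m+1)}\sum_{s=1}^S\sum_{t=0}^m \Exp{\norms{\nabla f(x_t^{(s)})}^2} \le \frac{2[f(\widetilde{x}^0) - f^\star]}{\eta S(m+1)} + \frac{2\sigma^2}{\sqrt{b(m+1)}}.
\]
Substituting the lower bound $\eta \ge \frac{2\sqrt{c_1}}{3L[b(m+1)]^{1/4}}$ from Theorem~\ref{th:singe_loop_const_step}(a) (with $c_1 = 1$, consistently with the constant in \eqref{eq:overall_complexity2}) reduces the first term to $\frac{3Lb^{1/4}[f(\widetilde{x}^0) - f^\star]}{S(m+1)^{3/4}}$, which is precisely \eqref{eq:double_loop_est}.

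Next, since $\widetilde{x}_T$ is drawn uniformly from $\{x_t^{(s)}\}$, the left-hand side of \eqref{eq:double_loop_est} coincides with $\Exp{\norms{\nabla f(\widetilde{x}_T)}^2}$. Plugging in $b = c_1\sigma^2/\varepsilon^2$ and $m+1 = c_2\sigma^2/\varepsilon^2$, the second term becomes $\frac{2\varepsilon^2}{\sqrt{c_1c_2}}$, and the factor $\frac{b^{1/4}}{(m+1)^{3/4}}$ simplifies to $\frac{c_1^{1/4}}{c_2^{3/4}}\cdot\frac{\varepsilon}{\sigma}$. The requirement $\Exp{\norms{\nabla f(\widetilde{x}_T)}^2}\le \varepsilon^2$ then reads
\[
\frac{3Lc_1^{1/4}[f(\widetilde{x}^0) - f^\star]\varepsilon}{S\,c_2^{3/4}\sigma} \le \varepsilon^2\Bigl(1 - \tfrac{2}{\sqrt{c_1c_2}}\Bigr),
\]
which yields the choice of $S$ in \eqref{eq:S_iterations} (and this is why the condition $c_1c_2 > 4$ is imposed, to keep the parenthesis positive). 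The complexity calculation \eqref{eq:Toc_double_loop} follows by noting that each outer stage costs $b$ stochastic gradients for the snapshot $v_0$ plus $3m$ for the $m$ inner iterations (three per iteration), so $\Tc_{ge} = (b+3m)S$, and direct substitution of $b = \mathcal{O}(\sigma^2/\varepsilon^2)$, $m = \mathcal{O}(\sigma^2/\varepsilon^2)$, and $S = \mathcal{O}(1/(\sigma\varepsilon))$ delivers the $\BigO{\sigma/\varepsilon^3}$ bound.

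The main obstacle I anticipate is purely bookkeeping rather than conceptual: I must be careful that the single-loop descent inequality used as the starting point has the correct constant in front of $\eta$ and the variance term, and that the lower bound on $\eta$ from Theorem~\ref{th:singe_loop_const_step}(a) applies with the mini-batch size $b$ now chosen as $c_1\sigma^2/\varepsilon^2$ rather than the $c_2\sigma^{8/3}(m+1)^{1/3}$ used in the single-loop analysis. Since the bound $\eta \ge \frac{2\sqrt{c_1}}{3L[b(m+1)]^{1/4}}$ depends only on $b(m+1)$ and not on the specific parametric form of $b$, the substitution is valid. A minor additional subtlety is the notational collision: the constant $c_1$ in Theorem~\ref{th:singe_loop_const_step} (fixed to $1$ in the final step above) is distinct from the constant $c_1$ appearing in the statement of Theorem~\ref{th:double_loop_convergence} that multiplies $\sigma^2/\varepsilon^2$ in the choice of $b$; these should be kept disentangled throughout the write-up.
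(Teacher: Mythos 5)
Your proposal is correct and follows essentially the same route as the paper's proof: telescope the per-stage single-loop estimate over the $S$ outer stages, invoke the step-size lower bound $\eta \geq \frac{2}{3L[b(m+1)]^{1/4}}$ from Theorem~\ref{th:singe_loop_const_step}(a), and then substitute $b = c_1\sigma^2/\varepsilon^2$, $m+1 = c_2\sigma^2/\varepsilon^2$ to solve for $S$ and count $(b+3m)S$ oracle calls. Your remarks on the two subtleties (the $\eta$-bound depending only on the product $b(m+1)$, and the reuse of the symbol $c_1$ with a different meaning) are both accurate and match what the paper implicitly relies on.
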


%
Note that the complexity \eqref{eq:Toc_double_loop} only holds if $\BigO{\frac{\sigma}{\varepsilon^3}} > \frac{c_1\sigma^2}{\varepsilon^2}$.
Otherwise, the total complexity is $\BigO{\max\set{\frac{\sigma}{\varepsilon^3}, \frac{\sigma^2}{\varepsilon^2}}}$, where other constants independent of $\sigma$ and $\varepsilon$, and are hidden.
Practically, if $\beta$ is very close to $1$, one can remove the unbiased SGD term to save one stochastic gradient evaluation. 
In this case, our estimator reduces to SARAH but using different step-size.
We observed empirically that when $\beta\approx 0.999$, the performance of our methods is not affected if we do so.

\beforesubsec
\subsection{Extensions to  mini-batch cases}\label{subsec:extensions}
\aftersubsec
We consider a mini-batch hybrid stochastic estimator $\hat{v}_t$ for the gradient $\nabla{f}(x_t)$ defined as:
\begin{equation}\label{eq:vhat_t}
\hat{v}_t := \beta_{t-1}\hat{v}_{t-1} + \frac{\beta_{t-1}}{\hat{b}_t}\sum_{i\in\hat{\Bc}_t}(\nabla{f}(x_t;\xi_i) - \nabla{f}(x_{t-1};\xi_i)) +  (1-\beta_{t-1})u_t,
\end{equation}
where $\beta_{t-1}\in [0, 1]$, and $\hat{\Bc}_t$ is a mini-batch of the size $\hat{b}_t$ and independent of the unbiased estimator $u_t$.
Note that $u_t$ can also be a mini-batch unbiased estimator.
For example, $u_t := \frac{1}{\tilde{b}_t}\sum_{j\in\tilde{\Bc}_t}\nabla{f}(x_t;\zeta_j)$ is a mini-batch SGD estimator with a mini-batch $\tilde{\Bc}_t$ of size $\tilde{b}_t$, where $\tilde{\Bc}_t$ is independent of $\hat{\Bc}_t$.

Using $\hat{v}_t$ defined by \eqref{eq:vhat_t}, we can design a mini-batch variant of Algorithms~\ref{alg:A1} to solve \eqref{eq:ncvx_prob}.
The following corollary is obtained as a result of Theorems~\ref{th:singe_loop_const_step} for the mini-batch variant  of Algorithm~\ref{alg:A1}, whose proof is in Subsection~\ref{apdx:co:mini_batch} of the supplementary document.

\begin{corollary}\label{co:mini_batch}
Let Algorithm~\ref{alg:A1} be applied to solve \eqref{eq:ncvx_prob} using mini-batch update \eqref{eq:vhat_t} for $v_t$ with $\hat{b}_t = \tilde{b}_t = \hat{b} \geq 1$ fixed, $0 < c_1 < \sqrt{b(m+1)}$, and the step-size 
\begin{equation}\label{eq:step_size_batch}
\eta := \frac{2}{L\left(1 + \sqrt{1 + 4\rho\alpha_m^2}\right)}~~~~\text{with}~~~~\alpha^2_m := \frac{\beta^2(1-\beta^{2m})}{1 - \beta^2}~~~\text{and}~~~\beta := 1 - \frac{c_1}{\sqrt{\rho b (m-1)}}.
\end{equation}
If we choose  $b := c_2\sigma^{8/3}\left[\rho(m+1)\right]^{1/3}$ for any $c_2 > 0$, then to guarantee $\Exp{\norms{\nabla{f}(\widetilde{x}_m)}^2} \leq \varepsilon^2$, we need to choose 
\begin{equation}\label{eq:choice_of_m2}
m := \left\lfloor \frac{\rho^{1/2}\sigma}{\varepsilon^3}\left[\frac{3 Lc_2^{1/4}}{2{c_1}}\left( f(x^0) - f^{\star}\right) +  \left(c_1 + \frac{1}{c_1}\right)\frac{1}{2\sqrt{c_2}}\right]^{3/2}\right\rfloor = \BigO{\frac{\rho^{1/2}\sigma}{\varepsilon^3}}.
\end{equation}
Therefore, the number of oracle calls is $\Tc_{ge}$ is
\begin{equation}\label{eq:T_ge3}
\Tc_{ge}  := \BigO{\frac{\rho^{1/2}\sigma^3}{\varepsilon} + \frac{\sigma}{\rho^{1/2}\varepsilon^3}},
\end{equation}
where $\rho = \rho(\hat{b}) := \frac{n - \hat{b}}{(n-1)\hat{b}}$ if $n := \vert\Omega\vert$ is finite, and $\rho(\hat{b}) := \frac{1}{\hat{b}}$, otherwise.
In particular, if we choose $\hat{b} := \frac{\varepsilon^2\sigma^2}{c_3^2}$ for some $0 < c_3 \leq \varepsilon\sigma$, then, the overall complexity $\Tc_{ge}$ is $\Tc_{ge} :=  \BigO{\left(c_3 + \frac{1}{c_3}\right)\frac{\sigma^2}{\varepsilon^2}}$.
\end{corollary}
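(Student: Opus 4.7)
The strategy is to adapt the analysis of Theorem~\ref{th:singe_loop_const_step} by inserting the variance-reduction factor $\rho(\hat{b})$ that arises from averaging over a mini-batch of size $\hat{b}$ in both the SARAH-like correction and the SGD term of $\hat{v}_t$ in \eqref{eq:vhat_t}. First I would derive a mini-batch analog of Lemma~\ref{le:upper_bound_new}: since $\hat{\Bc}_t$ and $\tilde{\Bc}_t$ are i.i.d., the standard identity $\Exp{\norms{\hat{b}^{-1}\sum_{i\in\hat{\Bc}}Z_i - \Exp{Z}}^2} = \rho(\hat{b})\Var{Z}$ applies, so the variance recursion of Lemma~\ref{le:upper_bound_new} goes through verbatim with $L^2$ replaced by $\rho L^2$ in the descent term and $\sigma^2$ replaced by $\rho\sigma^2$ in the residual; the initial variance $\Exp{\norms{\hat{v}_0 - \nabla{f}(x^0)}^2}$ is still bounded by $\sigma^2/b$ via the snapshot batch of size $b$.

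Next I would plug this bound into the $L$-smooth descent inequality for $x_{t+1} = x_t - \eta\hat{v}_t$, decompose $-\iprod{\nabla{f}(x_t),\hat{v}_t}$ via the polarization identity, and sum over $t=0,\dots,m$. Swapping the double sum via $\sum_{t=i+1}^m\omega_{i,t}\leq\alpha_m^2$, the coefficients of $\eta^2\Exp{\norms{\hat{v}_i}^2}$ accumulate the factor $\rho L^2\alpha_m^2$, and requiring this to be absorbed by the $-\tfrac{\eta}{2}(1-L\eta)\norms{\hat{v}_i}^2$ contribution from the descent lemma produces the quadratic $L^2\rho\alpha_m^2\,\eta^2 + L\eta - 1 \leq 0$, whose positive root is precisely the step-size in \eqref{eq:step_size_batch}. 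With $\widetilde{x}_m\sim\Ub(\{x_t\}_{t=0}^m)$ and division by $\eta(m+1)/2$, this yields a two-term bound of the shape
\begin{equation*}
\Exp{\norms{\nabla{f}(\widetilde{x}_m)}^2} \leq \frac{3(\rho b)^{1/4} L[f(x^0)-f^{\star}]}{\sqrt{c_1}(m+1)^{3/4}} + \Big(c_1 + \tfrac{1}{c_1}\Big)\frac{\sigma^2}{\sqrt{\rho b(m+1)}},
\end{equation*}
i.e.\ the natural analog of \eqref{eq:single_SGD} with $b$ replaced by $\rho b$ inside the first term.

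Substituting $b = c_2\sigma^{8/3}[\rho(m+1)]^{1/3}$ balances the two terms, and enforcing the right-hand side $\leq\varepsilon^2$ solves for $m$ as in \eqref{eq:choice_of_m2}. Since the per-iteration oracle cost is $3\hat{b}$, we obtain $\Tc_{ge} = b + 3\hat{b}m = \BigO{\rho^{1/2}\sigma^3/\varepsilon + \sigma/(\rho^{1/2}\varepsilon^3)}$, which is \eqref{eq:T_ge3}. Finally, inserting $\hat{b} = \varepsilon^2\sigma^2/c_3^2$, so that $\rho^{1/2} = c_3/(\varepsilon\sigma)$ in the infinite-sample case, collapses this to $\BigO{(c_3 + 1/c_3)\sigma^2/\varepsilon^2}$. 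The main obstacle will be carrying $\rho$ consistently through the variance recursion: it multiplies both the $L^2$ descent term and the $\sigma^2$ residual but \emph{not} the snapshot variance $\sigma^2/b$, and this asymmetric dependence is exactly what makes the balance $b = c_2\sigma^{8/3}[\rho(m+1)]^{1/3}$ reproduce the advertised complexity rather than a weaker bound; the step-size quadratic and the trajectory-telescoping argument are otherwise direct transcriptions of the proof of Theorem~\ref{th:singe_loop_const_step}.
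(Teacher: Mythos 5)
Your route is the same as the paper's: derive the mini-batch analog of Lemma~\ref{le:upper_bound_new} with $L^2\to\rho L^2$ and $\sigma^2\to\rho\sigma^2$ while the snapshot variance stays $\sigma^2/b$ (this is exactly Lemma~\ref{le:upper_bound_new_batch}), absorb the accumulated $\rho L^2\alpha_m^2$ term via the quadratic $L^2\rho\alpha_m^2\eta^2 + L\eta - 1\le 0$ to get \eqref{eq:step_size_batch}, then balance and count oracle calls; the final complexities you quote agree with \eqref{eq:choice_of_m2} and \eqref{eq:T_ge3}. One concrete slip: your displayed intermediate bound has second term $\big(c_1+\tfrac{1}{c_1}\big)\sigma^2/\sqrt{\rho b(m+1)}$, whereas carrying your own $\rho$-bookkeeping through (residual $\rho(1-\beta)\sigma^2$ plus snapshot $\tfrac{\sigma^2}{(1-\beta)b(m+1)}$ with $1-\beta = c_1/\sqrt{\rho b(m+1)}$) gives $\big(c_1+\tfrac{1}{c_1}\big)\sigma^2\sqrt{\rho/(b(m+1))}$, as in \eqref{eq:single_loop_batch} --- a discrepancy of a factor $\rho$. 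Taken literally, your version scales as $\rho^{-2/3}(m+1)^{-2/3}$ after substituting $b=c_2\sigma^{8/3}[\rho(m+1)]^{1/3}$ while the first term scales as $\rho^{1/3}(m+1)^{-2/3}$, so the two terms would \emph{not} balance and $m$ would come out worse than \eqref{eq:choice_of_m2}. Since everything downstream in your write-up (the balancing claim and the final $\Tc_{ge}$) is consistent only with the corrected exponent, this reads as a transcription error rather than a conceptual gap, but it is the one formula you must fix for the argument to close.
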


We can also develop a mini-batch variant of Algorithm~\ref{alg:A2} and estimate its complexity as in Theorem~\ref{th:double_loop_convergence}.
For more details, we refer to Subsection~\ref{apdx:th:double_loop_convergence_mini_batch} in the supplementary document due to space limit.


\beforesec
\section{Numerical experiments}\label{sec:num_experiments}
\aftersec
We verify our algorithms on two numerical examples and compare them with several existing methods: SVRG \cite{nonconvexSVRG}, SVRG+ \cite{li2018simple}, SPIDER \cite{fang2018spider}, SpiderBoost \cite{wang2018spiderboost}, and SGD \cite{ghadimi2013stochastic}. 
Due to space limit, the detailed configuration of our experiments as well as more numerical experiments can be found in Supplementary Document~\ref{apdx:subsec:experiments}.
Our numerical experiments are implemented in Python and running on a MacBook Pro. Laptop with 2.7GHz Intel Core i5 and 16Gb memory.

\beforesubsec
\subsection{Logistic regression with nonconvex regularizer}\label{subsec:exam1}
\aftersubsec
Our first example is the following well-known problem used in may papers including \cite{wang2018spiderboost}:
\myeq{eq:exam1}{
\min_{x\in\R^p}\bigg\{ f(x) := \frac{1}{n}\sum_{i=1}^n \Big[ f_i(x) := \log(1 + \exp(-a_i^Tx)) + \lambda\sum_{j=1}^p\frac{x_i^2}{1 + x_i^2}\Big] \bigg\},
}
where $a_i\in\R^p$ are given for $i=1,\cdots, n$, and $\lambda > 0$ is a regularization parameter. 
Clearly, problem \eqref{eq:exam1} fits \eqref{eq:ncvx_prob} well with $L_{f_i} = \frac{\Vert A\Vert^2}{4} + 2\lambda$.
In this experiment, we choose $\lambda = 0.1$ and normalize the data.
One can also verify Assumption~\ref{as:A1} due to the bounded Hessian of $f_i$.

We use three datasets from LibSVM for \eqref{eq:exam1}: \texttt{w8a} ($n=49,749, p=300$), \texttt{rcv1.binary} ($n=20,242, p = 47,236$), and \texttt{real-sim} $(n=72,309, p = 20,958$).
We run $8$ different algorithms as follows.
Algorithm~\ref{alg:A1} with constant step-size (Hybrid-SGD-SL) and adaptive step-size (Hybrid-SGD-ASL) using our theoretical step-sizes \eqref{eq:step_size1} and \eqref{eq:update_of_eta_t}, respectively without tuning.
Hybrid-SGD-DL is Algorithm~\ref{alg:A2}. 
SGD1 is SGD with constant step-size $\eta_t = \frac{0.1}{L}$, and SGD2 is SGD with adaptive step-size $\eta_t = \frac{0.1}{L(1 + \lfloor t/n\rfloor)}$.
Since the stepsize of SPIDER depends on an accuracy $\varepsilon$, we choose $\varepsilon = 10^{-1}$ to get a larger step-size.
Our first result in the single-sample case (i.e. when $\hat{b} = 1$, not using mini-batch) is plotted in Fig.~\ref{fig:logistic_reg} after $20$ epochs.


\begin{figure}[htp!]
\begin{center}
\includegraphics[width = 0.9\textwidth]{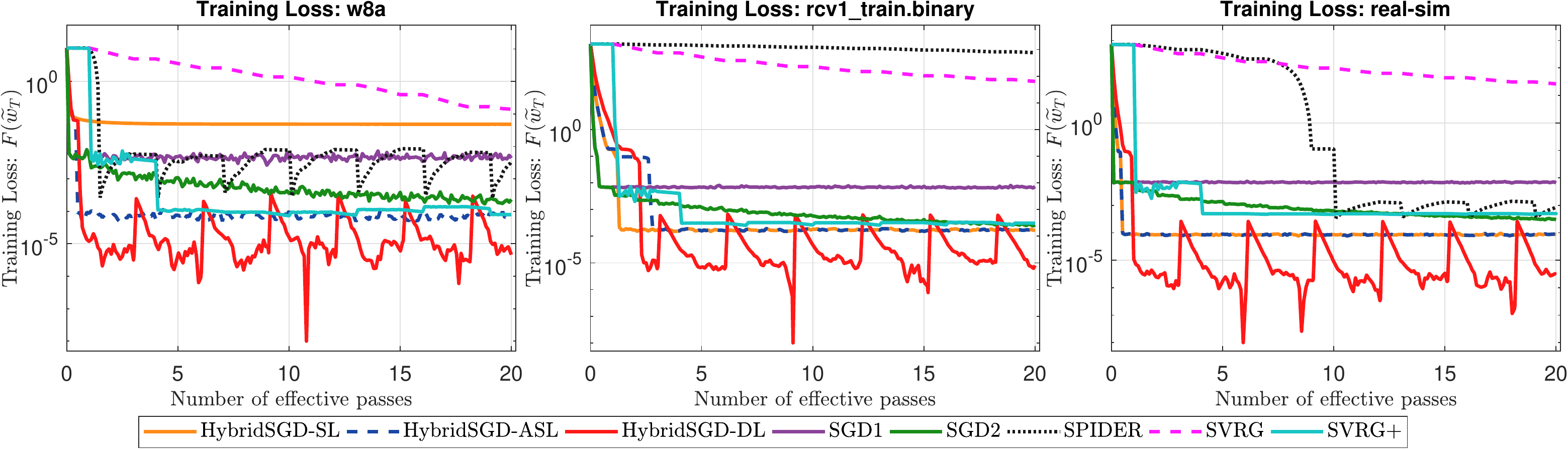}\vspace{1ex}\\
\includegraphics[width = 0.9\textwidth]{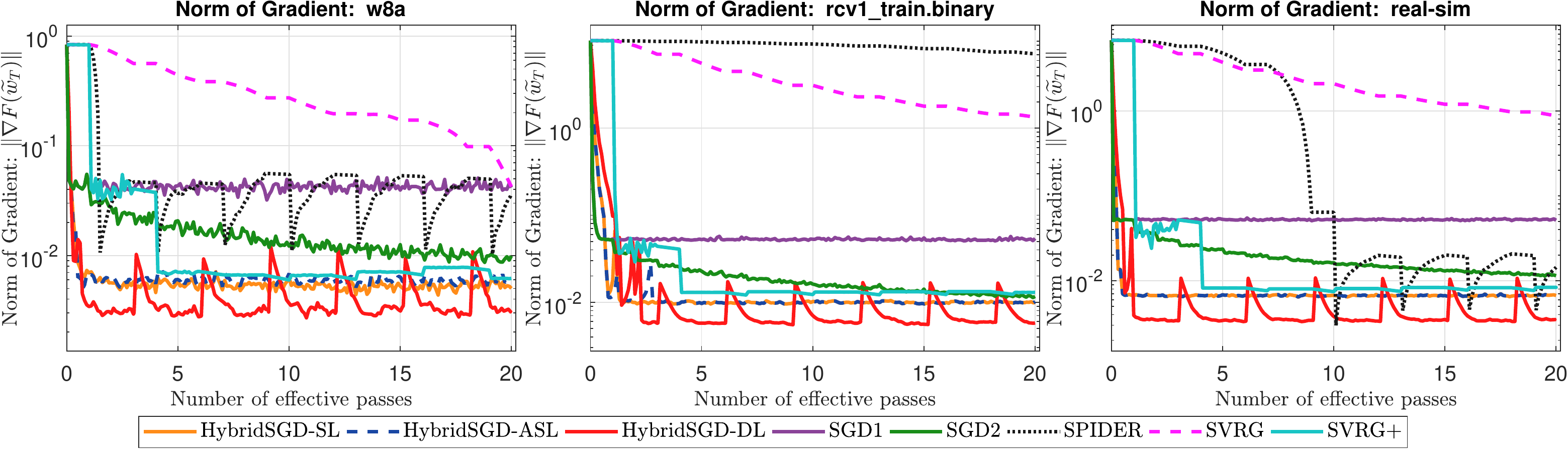}
\vspace{-1.5ex}
\caption{The training loss and gradient norms of \eqref{eq:exam1}: Single sample case $\hat{b} =1$.}\label{fig:logistic_reg}
\end{center}
\vspace{-2ex}
\end{figure}
From Fig. \ref{fig:logistic_reg}, we observe that  Hybrid-SGD-SL has similar convergence behavior as SGD1, but Hybrid-SGD-ASL works better.
Hybrid-SGD-DL is the best but has some oscillation.
SGD2 works better than SGD1 and is comparable with Hybrid-SGD-SL/ASL in the two last datasets.
SVRG performs very poorly due to its small step-size.
SVRG+ works much better than SVRG, and is comparable with our methods.
SPIDER is also slow even when we have increased its step-size.

Now, we run 3 single-loop algorithms with mini-batch of the size $\hat{b} :=300$. The result is shown in Fig.~\ref{fig:logistic_reg2} after $20$ epochs.
\begin{figure}[htp!]
\begin{center}
\includegraphics[width = 1\textwidth]{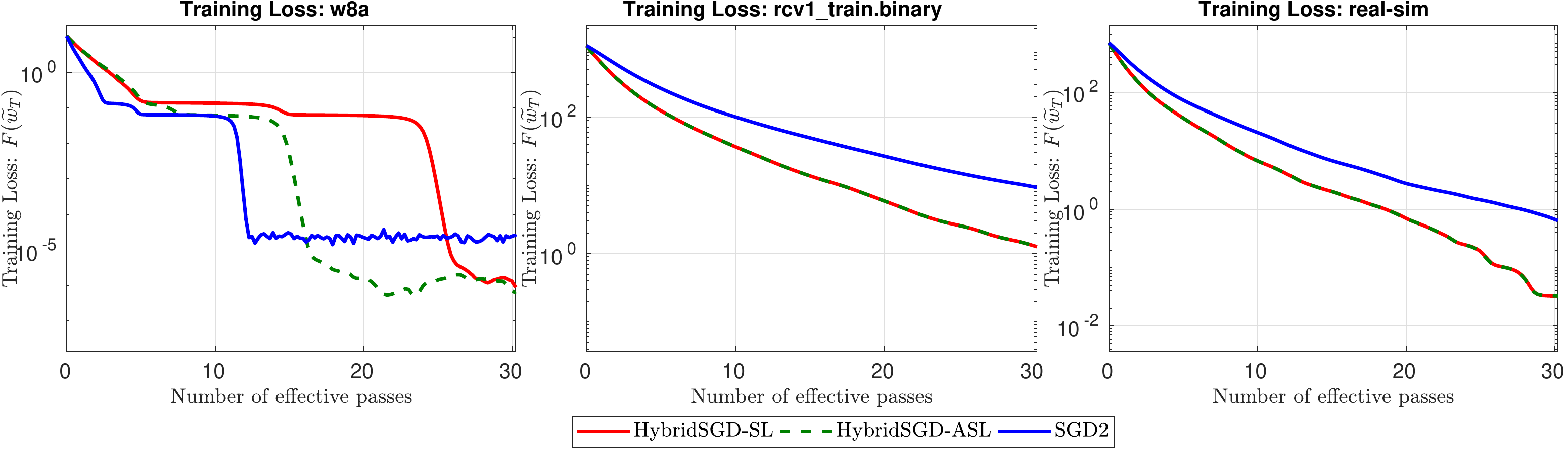}
\includegraphics[width = 1\textwidth]{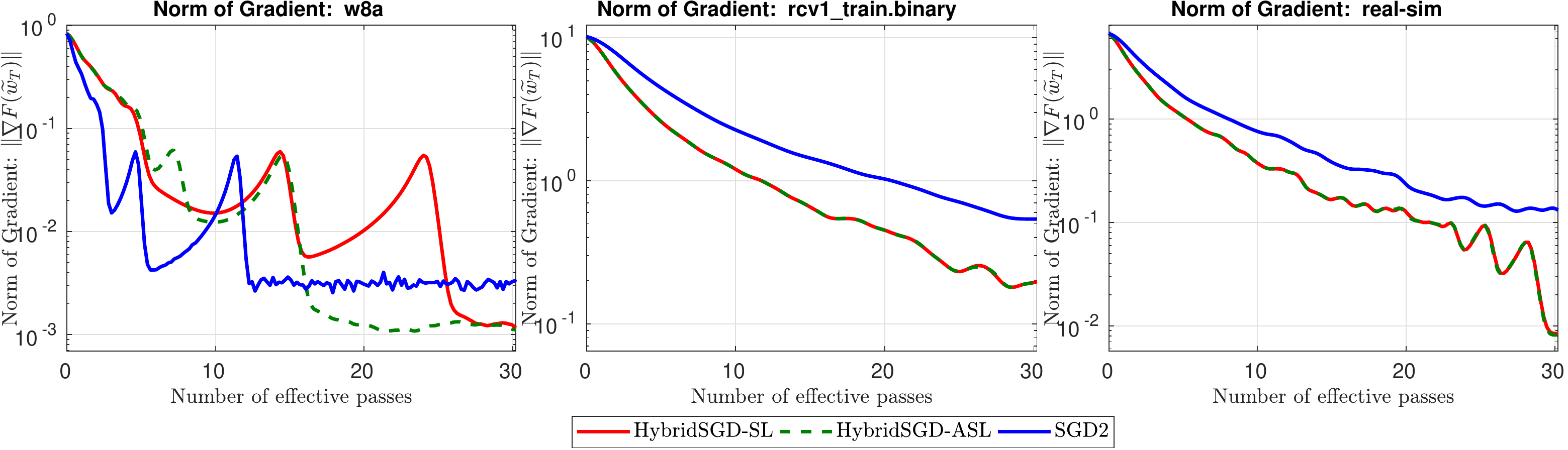}
\vspace{-1.5ex}
\caption{The training loss and gradient norms of \eqref{eq:exam1}: Mini-batch case $\hat{b} > 1$.}\label{fig:logistic_reg2}
\end{center}
\vspace{-4ex}
\end{figure}
Fig.~\ref{fig:logistic_reg2} shows similar performance between Hybrid-SGD-SL and ASL and SGD2.
Clearly, these theoretical variants of Algorithm~\ref{alg:A1} are slightly better than the adaptive SGD variant (SGD2), where a careful step-size is used.


\beforesubsec
\subsection{Binary classification involving nonconvex loss and Tikhonov's regularizer}\label{subsec:exam2}
\aftersubsec
We consider the following binary classification problem studied in \cite{zhao2010convex} involving nonconvex loss: 
\myeq{eq:exam2}{
f^{\star} := \min_{x\in\R^p}\Big\{ f(x) := \frac{1}{n}\sum_{i=1}^n\ell(a_i^{\top}x, b_i) + \frac{\lambda}{2}\norms{x}^2 \Big\},
}
where $a_i\in\R^p$ and $b_i \in\set{-1,1}$ are given data for $i=1,\cdots, n$, $\lambda > 0$ is a regularization parameter, and  $\ell$ is a nonconvex loss of the forms: $\ell(\tau, s) = \left(1 - \frac{1}{1+\exp(-\tau s)}\right)^2$ (using in two-layer neural networks).
One can check that \eqref{eq:exam2} satisfies Assumption~\ref{as:A1} with $L \approx 0.15405\max_i\norms{a_i}^2 + \lambda$.
We choose $\lambda := 0.01$, and test three variants of Algorithm~\ref{alg:A2}: Hybrid-SGD-DL and compare them with SpiderBoost, SVRG, and SVRG+.
Due to space limit, we only plot one experiment in Fig.~\ref{fig:logistic_reg3} after $20$ epochs. 
Additional experiments can be found in Supplementary Document~\ref{apdx:subsec:experiments}.


\begin{figure}[htp!]
\vspace{-1.3ex}
\begin{center}
\includegraphics[width = 1\textwidth]{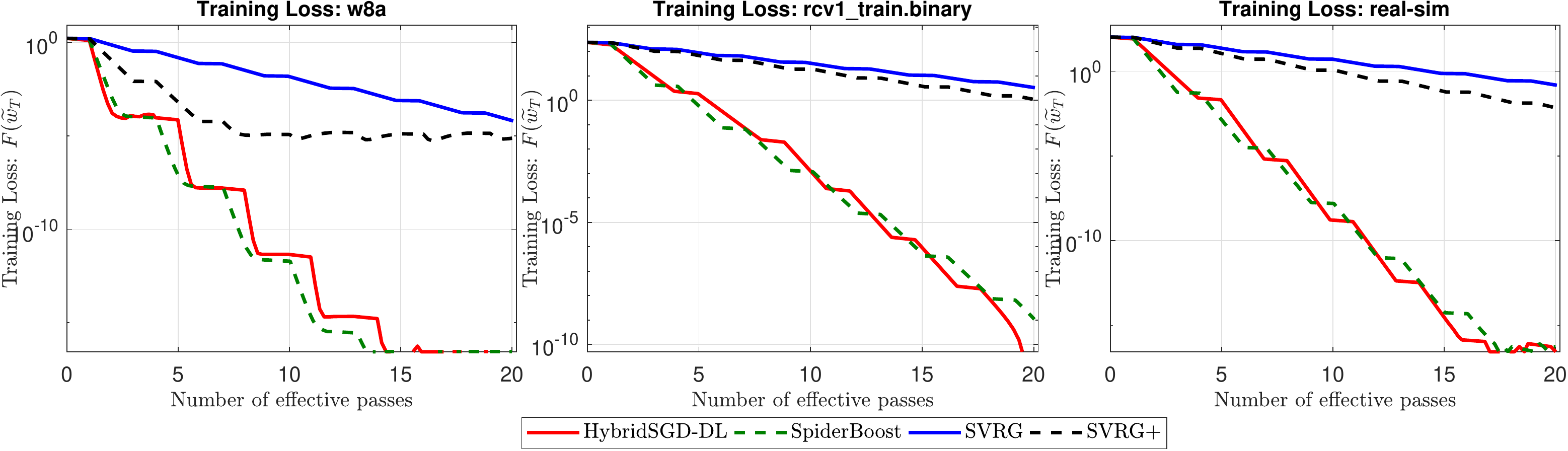}
\includegraphics[width = 1\textwidth]{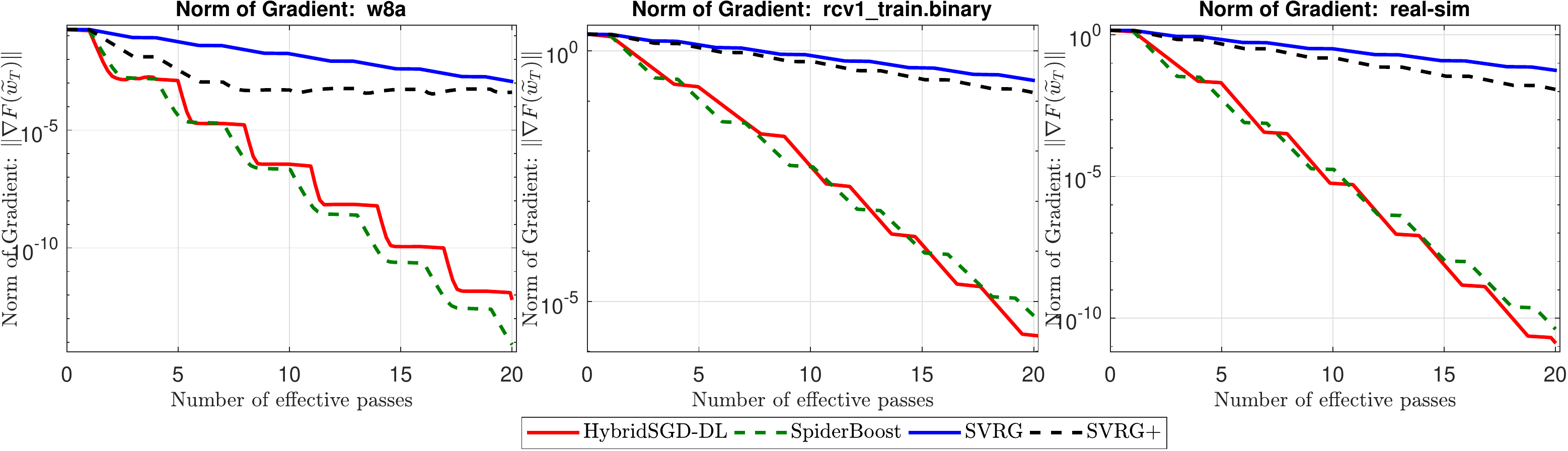}
\vspace{-1.5ex}
\caption{The training loss and gradient norms of \eqref{eq:exam2}: Mini-batch case $\hat{b} > 1$.}\label{fig:logistic_reg3}
\end{center}
\vspace{-3ex}
\end{figure}

As we can see from Fig.~\ref{fig:logistic_reg3} that Algorithm~\ref{alg:A2} performs well and is slightly better than SpiderBoost.
Note that SpiderBoost simply uses SARAH estimator with constant stepsize $\eta = \frac{1}{2L}$ but with mini-batch of the size $\lfloor\sqrt{n}\rfloor$.
It is not surprise that SpiderBoost makes very good progress to decrease the gradient norms. 
Both SVRG and SVRG+ perform much worse than Hybrid-SGD-DL and SpiderBoost in this test, but SVRG+ is slightly better than SVRG.
In our methods, due to the aid of SARAH part, they also make similar progress as SpiderBoost but using different step-sizes.

\beforesec
\section{Conclusion}
\aftersec
We have introduced a new hybrid SARAH-SGD estimator for the objective gradient of expectation optimization problems.
Under standard assumptions, we have shown that this estimator has a better variance reduction property than SARAH.
By exploiting such an estimator, we have developed a new Hybrid-SGD algorithm, Algorithm~\ref{alg:A1}, that has better complexity bounds than state-of-the-art SGDs.
Our algorithm works with both constant and adaptive step-sizes.
We have also studied its double-loop and mini-batch variants.
We believe that our approach can be extended to other choices of unbiased estimators, Hessian estimators for second-order stochastic methods, and adaptive $\beta$. 

%
\appendix
\beforesec
\section{Appendix: Properties of the hybrid stochastic estimator}\label{sec:appendix1}
\aftersec
This supplementary document provides the full proof of all the results in the main text.
First, we need the following lemma in the sequel.

\begin{lemma}\label{le:adaptive_step_size}
Given $L > 0$ and $\omega \in (0, 1)$. 
Let $\sets{\eta_t}_{t=0}^m$ be the sequence updated by
\begin{equation}\label{eq:update_of_eta_t_proof}
\eta_m := \frac{1}{L},~~~\text{and}~~\eta_t := \frac{1}{L + L^2\big[\omega\eta_{t+1} + \omega^2\eta_{t+2} + \cdots + \omega^{(m-t)}\eta_m\big]},
\end{equation}
for $t=0,\cdots, m-1$.
Then  
\begin{equation}\label{eq:stepsize_pros}
0 < \eta_0 < \eta_1 < \cdots < \eta_m = \frac{1}{L},~~~\text{and}~~~\Sigma_m := \sum_{t=0}^m\eta_t \geq \frac{(m+1)\sqrt{1-\omega}}{2L}.
\end{equation}
\end{lemma}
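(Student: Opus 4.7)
The plan is to reduce the vector-valued recursion defining $\set{\eta_t}$ to a scalar recursion that is easier to analyze. Setting $a_t := 1/(L\eta_t)$, the definition \eqref{eq:update_of_eta_t_proof} rewrites as $a_t = 1 + L S_t$, where $S_t := \sum_{j=1}^{m-t}\omega^j\eta_{t+j}$ (so $S_m = 0$ and $a_m = 1$). Using the elementary identity $S_t = \omega\eta_{t+1} + \omega S_{t+1}$ together with $\eta_{t+1} = 1/(La_{t+1})$ and $LS_{t+1} = a_{t+1} - 1$ gives the clean scalar recursion
\[
a_t \;=\; (1-\omega) \,+\, \omega\!\left(a_{t+1} + \frac{1}{a_{t+1}}\right), \qquad a_m = 1.
\]
All claims about $\sets{\eta_t}$ can then be read off from this: positivity and monotonicity of $\eta_t$ are equivalent to $a_t > 0$ and $a_0 > a_1 > \cdots > a_m = 1$, and the sum bound follows from a uniform upper bound on $a_t$.

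For the monotonicity, I would first establish the cheap \emph{lower} bound $a_t \geq 1$ for all $t$, and in fact $a_t \geq 1+\omega > 1$ for $t \leq m-1$, by backward induction: applying AM-GM $x + 1/x \geq 2$ at $x = a_{t+1} \geq 1$ yields $a_t \geq (1-\omega) + 2\omega = 1+\omega$. Then subtracting the recursion at level $t+1$ from that at level $t$ gives the key factorization
\[
a_t - a_{t+1} \;=\; \omega\,(a_{t+1} - a_{t+2})\!\left(1 - \tfrac{1}{a_{t+1}a_{t+2}}\right),
\]
and since $a_{t+1}a_{t+2} > 1$ (using the bound just established), the base case $a_{m-1} - a_m = \omega > 0$ propagates by induction on decreasing $t$ to give $a_0 > a_1 > \cdots > a_m$, which is exactly \eqref{eq:stepsize_pros}.

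For the lower bound on $\Sigma_m$, I aim to prove the uniform estimate $a_t \leq c$ with $c := 2/\sqrt{1-\omega}$, since this immediately yields $\eta_t \geq 1/(Lc) = \sqrt{1-\omega}/(2L)$, and summing over $t = 0, \ldots, m$ produces $\Sigma_m \geq (m+1)\sqrt{1-\omega}/(2L)$. The map $g(x) := (1-\omega) + \omega(x + 1/x)$ is increasing on $[1,\infty)$ because $g'(x) = \omega(1 - 1/x^2) \geq 0$, so by backward induction (with base $a_m = 1 \leq c$) whenever $a_{t+1} \in [1,c]$ one gets $a_t = g(a_{t+1}) \leq g(c)$; it therefore suffices to verify the single scalar inequality $g(c) \leq c$. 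Substituting $u = \sqrt{1-\omega} \in (0,1)$ so that $c = 2/u$ reduces this to $(u-1)^2 + 2 \geq 0$, which is trivial.

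The only nontrivial step is guessing the right constant $c = 2/\sqrt{1-\omega}$; everything else is routine bookkeeping once the recursion has been rewritten in terms of $a_t$. The natural fixed point of $g$ is strictly larger than $c$, so the bound is not forced by a stability argument but is instead calibrated exactly to produce the $\sqrt{1-\omega}$ factor required by the conclusion. After that calibration, the verification $g(c) \leq c$ is the algebraic identity above, and the lemma follows.
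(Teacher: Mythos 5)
Your proof is correct, and it takes a genuinely different route from the paper's. The paper first asserts the monotonicity $0<\eta_0<\cdots<\eta_m$ without a detailed argument, then bounds the sum globally: it applies Chebyshev's sum inequality to each defining equation, sums the resulting system, invokes Chebyshev and Cauchy--Schwarz again, and arrives at a quadratic inequality in $\Sigma_m$ whose positive root gives $\Sigma_m \geq \frac{2(m+1)\sqrt{1-\omega}}{L(2+\sqrt{3\omega})} \geq \frac{(m+1)\sqrt{1-\omega}}{2L}$. You instead pass to the reciprocals $a_t = 1/(L\eta_t)$, which collapses the full-history recursion to the one-step map $a_t = (1-\omega)+\omega(a_{t+1}+1/a_{t+1})$; I have checked this reduction, the factorization $a_t-a_{t+1}=\omega(a_{t+1}-a_{t+2})\bigl(1-\tfrac{1}{a_{t+1}a_{t+2}}\bigr)$, and the verification $g(c)\le c$ for $c=2/\sqrt{1-\omega}$, and all are sound. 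Your argument buys two things the paper's does not: a rigorous proof of the strict monotonicity claim (which the paper's own Chebyshev step implicitly relies on), and the strictly stronger pointwise bound $\eta_t \geq \sqrt{1-\omega}/(2L)$ for every $t$, from which the sum bound is immediate. One small inaccuracy in your closing commentary: the fixed point of $g$ on $(1,\infty)$ is $x^{\ast} = \tfrac{1}{2}\bigl(1+\sqrt{(1+3\omega)/(1-\omega)}\bigr)$, which is strictly \emph{smaller} than $c=2/\sqrt{1-\omega}$ (since $\sqrt{1-\omega}+\sqrt{1+3\omega}\le 3<4$), not larger; so $g(c)\le c$ is in fact forced by the stability of $x^{\ast}$, and one could even take $c=x^{\ast}$ to recover the paper's sharper intermediate constant. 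This does not affect the validity of your proof.
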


\begin{proof}
First, from \eqref{eq:update_of_eta_t_proof} it is obvious to show that $0 < \eta_0 < \cdots < \eta_{m-1} = \frac{1}{L(1+\omega)} < \eta_m = \frac{1}{L}$. 
At the same time, since $\omega \in (0, 1)$, we have $1 \geq \omega \geq \omega^2 \geq \cdots \geq \omega^{m}$.
By Chebyshev's sum inequality, we have
\begin{equation}\label{eq:estimate1}
\begin{array}{ll}
(m-t)\big(\omega\eta_{t+1} + \omega^2\eta_{t+2} + \cdots + \omega^{m-t}\eta_m\big) &\leq \big(\sum_{j=t+1}^m\eta_i\big)\left(\omega + \omega^2 + \cdots + \omega^{m-t}\right) \vspace{1ex}\\
&\leq \frac{\omega}{1-\omega}\big(\sum_{j=t+1}^m\eta_i\big).
\end{array}
\end{equation}
From the update \eqref{eq:update_of_eta_t_proof}, we also have
\begin{equation}\label{eq:estimate2}
\left\{\begin{array}{ll}
L^2\eta_0(\omega\eta_1 + \omega^2\eta_2 + \cdots + \omega^{m}\eta_m) &= 1 - L\eta_0 \\
L^2\eta_1(\omega\eta_2 + \omega^2\eta_3 + \cdots + \omega^{m-1}\eta_{m}) &= 1 - L\eta_1 \\
\cdots & \cdots \\
L^2\eta_{m-1}\omega\eta_m &= 1 - L\eta_{m-1} \\
0  &= 1 - L\eta_{m}.
\end{array}\right.
\end{equation}
Using \eqref{eq:estimate1} into \eqref{eq:estimate2}, we get
\begin{equation*} 
\left\{\begin{array}{ll}
\frac{\omega L^2}{1-\omega}\eta_0(\eta_0 + \eta_1   + \cdots + \eta_m) &\geq m  - mL\eta_0 + \frac{\omega L^2}{1-\omega}\eta_0^2\\
\frac{\omega L^2}{1-\omega}\eta_1(\eta_0 + \eta_1  + \cdots + \eta_{m}) &\geq (m-1) - (m-1)L\eta_1 + \frac{\omega L^2}{1-\omega}(\eta_1\eta_0 + \eta_1^2) \\
\cdots & \cdots \\
\frac{\omega L^2}{1-\omega}\eta_{m-1}(\eta_0 + \eta_1 + \cdots + \eta_m) &\geq 1 - L\eta_{m-1} + \frac{\omega L^2}{1-\omega}(\eta_{m-1}\eta_0 + \cdots + \eta_{m-1}^2) \\
\frac{\omega L^2}{1-\omega}\eta_{m}(\eta_0 + \eta_1 + \cdots + \eta_m) &\geq 1 - L\eta_{m} + \frac{\omega L^2}{1-\omega}(\eta_{m}\eta_0 + \cdots + \eta_{m}^2).
\end{array}\right.
\end{equation*}
Let $\Sigma_m := \sum_{t=0}^m\eta_t$ and $S_m := \sum_{t=0}^m\eta_t^2$.
Summing up both sides of the above inequalities, we get 
\begin{equation*}
\frac{\omega L^2}{1-\omega}\Sigma_m^2 \geq  \frac{m^2 + m + 2}{2} - L(m\eta_0 + (m-1)\eta_1 + \cdots + \eta_{m-1} + \eta_m) + \frac{\omega L^2}{2(1-\omega)}\big(S_m + \Sigma_m^2\big).
\end{equation*}
Using again Chebyshev's sum inequality, we have
\begin{equation*}
m\eta_0 + (m-1)\eta_1 + \cdots + \eta_{m-1} + \eta_m \leq \frac{m^2 + m + 2}{2(m+1)}\left(\sum_{t=0}^m\eta_t\right) = \frac{(m^2 + m + 2)\Sigma_m}{2(m+1)}.
\end{equation*}
Note that $(m+1)S_m \geq \Sigma_m^2$ by Cauchy-Schwarz's inequality, which shows that $S_m + \Sigma_m^2 \geq \big(\frac{m+2}{m+1}\big)\Sigma_m^2$.
Combining three last inequalities, we obtain the following quadratic inequation in $\Sigma_m$
\begin{equation*}
\frac{m\omega L^2}{(1-\omega)}\Sigma_m^2 + L(m^2 + m + 2)\Sigma_m -  (m+1)(m^2 + m + 2) \geq 0.
\end{equation*}
Solving this inequation with respect to $\Sigma_m > 0$, we obtain
\begin{equation*}
\begin{array}{ll}
\Sigma_m &\geq \frac{(1-\omega)\big[\sqrt{(m^2 + m + 2)^2 + \frac{4m(m+1)(m^2 + m + 2)\omega}{1-\omega}} - (m^2 + m + 2)\big]}{2\omega m L} \vspace{1ex}\\
&= \frac{2(m+1)}{L\left[1 + \sqrt{1 + \frac{4m(m+1)\omega}{(1-\omega)(m^2 + m+2)}}\right]} \vspace{1ex}\\
&\geq \frac{2(m+1)\sqrt{1-\omega}}{L\left[\sqrt{1-\omega} + \sqrt{1 + 3\omega}\right]} ~~~~~\text{since}~~\frac{m(m+1)}{m^2+m+2} < 1\vspace{1ex}\\
&\geq \frac{2(m+1)\sqrt{1-\omega}}{L(2 + \sqrt{3\omega})} ~~~~~\text{since}~~\sqrt{1+3\omega} + \sqrt{1-\omega} \leq 2 + \sqrt{3\omega}.
\end{array}
\end{equation*}
Since $\omega \in (0, 1)$, we can overestimate this as $\Sigma_m \geq  \frac{(m+1)\sqrt{1-\omega}}{2L}$, which proves \eqref{eq:stepsize_pros}.
\end{proof}

\beforesubsec
\subsection{The proof of Lemma~\ref{le:key_estimate10}: Properties of the hybrid SARAH estimator}\label{apdx:le:key_estimate10}
\aftersubsec
By taking the expectation of both sides in \eqref{eq:v_estimator} and using the fact that $\xi_t$ and $\zeta_t$ are independent, we can easily obtain \eqref{eq:biased_estimator}.

To prove \eqref{eq:key_estimate10}, we first write
\begin{equation*} 
\begin{array}{ll}
v_t - \nabla{f}(x_t) &=  \beta_{t-1}(v_{t-1}  - \nabla{f}(x_{t-1})) + \beta_{t-1}(\nabla{f}(x_t;\xi_t) - \nabla{f}(x_{t-1};\xi_t)) \vspace{1ex}\\
& + {~} (1-\beta_{t-1})\big[u_t - \nabla{f}(x_t)\big] + \beta_{t-1}\big[\nabla{f}(x_{t-1}) - \nabla{f}(x_t)\big].
\end{array}
\end{equation*}
In this case, we have
\begin{equation*} 
\begin{array}{ll}
\norms{v_t - \nabla{f}(x_t)}^2 &=  \beta_{t-1}^2\norms{v_{t-1}  - \nabla{f}(x_{t-1})}^2 + \beta_{t-1}^2\norms{\nabla{f}(x_t;\xi_t) - \nabla{f}(x_{t-1};\xi_t)}^2 \vspace{1ex}\\
& + {~} (1-\beta_{t-1})^2\norms{u_t - \nabla{f}(x_t)}^2 + \beta_{t-1}^2\norms{\nabla{f}(x_{t-1}) - \nabla{f}(x_t)}^2 \vspace{1ex}\\
&+  {~} 2\beta_{t-1}^2(v_{t-1}  - \nabla{f}(x_{t-1}))^{\top}(\nabla{f}(x_t;\xi_t) - \nabla{f}(x_{t-1};\xi_t))\vspace{1ex}\\
&+  {~} 2\beta_{t-1}^2(v_{t-1}  - \nabla{f}(x_{t-1}))^{\top}(\nabla{f}(x_{t-1}) - \nabla{f}(x_t))\vspace{1ex}\\
&+  {~} 2\beta_{t-1}(1-\beta_{t-1})(v_{t-1}  - \nabla{f}(x_{t-1}))^{\top}(u_t - \nabla{f}(x_t))\vspace{1ex}\\
&+  {~} 2\beta_{t-1}(1-\beta_{t-1})(\nabla{f}(x_t;\xi_t) - \nabla{f}(x_{t-1};\xi_t))^{\top}(u_t - \nabla{f}(x_t))\vspace{1ex}\\
&+  {~} 2\beta_{t-1}^2(\nabla{f}(x_t;\xi_t) - \nabla{f}(x_{t-1};\xi_t))^{\top}(\nabla{f}(x_{t-1}) - \nabla{f}(x_t))\vspace{1ex}\\
&+  {~} 2\beta_{t-1}(1-\beta_{t-1})(u_t - \nabla{f}(x_t))^{\top}(\nabla{f}(x_{t-1}) - \nabla{f}(x_t)).
\end{array}
\end{equation*}
Let us first take expectation w.r.t. $\xi_t$ conditioned on $\zeta_t$ to obtain
\begin{equation*}
\begin{array}{ll}
\Exps{\xi_t}{\norms{v_t - \nabla{f}(x_t)}^2 \mid \zeta_t} &=  \beta_{t-1}^2\norms{v_{t-1}  - \nabla{f}(x_{t-1})}^2 + \beta_{t-1}^2\Exps{\xi_t}{\norms{\nabla{f}(x_t;\xi_t) - \nabla{f}(x_{t-1};\xi_t)}^2 \mid \zeta_t} \vspace{1ex}\\
& + {~} (1-\beta_{t-1})^2\norms{u_t - \nabla{f}(x_t)}^2 - \beta_{t-1}^2\norms{\nabla{f}(x_{t-1}) - \nabla{f}(x_t)}^2 \vspace{1ex}\\
&+  {~} 2\beta_{t-1}(1-\beta_{t-1})(v_{t-1}  - \nabla{f}(x_{t-1}))^{\top}(u_t - \nabla{f}(x_t))\vspace{1ex}\\
&+  {~} 2\beta_{t-1}(1-\beta_{t-1})(\nabla{f}(x_t) - \nabla{f}(x_{t-1}))^{\top}(u_t - \nabla{f}(x_t))\vspace{1ex}\\
&+  {~} 2\beta_{t-1}(1-\beta_{t-1})(u_t - \nabla{f}(x_t))^{\top}(\nabla{f}(x_{t-1}) - \nabla{f}(x_t)).
\end{array}
\end{equation*}
Now, taking the expectation over $\zeta_t$, and noting that $\Exps{(\xi_t,\zeta_t)}{\cdot} = \Exps{\zeta_t}{\Exps{\xi_t}{\cdot\mid \zeta_t}}$ and $\Exps{\zeta_t}{u_t - \nabla{f}(x_t)} = 0$, we get
\begin{equation*} 
\begin{array}{ll}
\Exps{(\xi_t,\zeta_t)}{\norms{v_t - \nabla{f}(x_t)}^2} &=  \beta_{t-1}^2\norms{v_{t-1}  - \nabla{f}(x_{t-1})}^2 + \beta_{t-1}^2\Exps{\xi_t}{\norms{\nabla{f}(x_t;\xi_t) - \nabla{f}(x_{t-1};\xi_t)}^2} \vspace{1ex}\\
& +  {~} (1-\beta_{t-1})^2\Exps{\zeta_t}{\norms{u_t - \nabla{f}(x_t)}^2} - \beta_{t-1}^2\norms{\nabla{f}(x_{t-1}) - \nabla{f}(x_t)}^2,
\end{array}
\end{equation*}
which is exactly \eqref{eq:key_estimate10}.
\Eproof

\beforesubsec
\subsection{The proof of Lemma~\ref{le:upper_bound_new}: Bound on the variance of the hybrid estimator}\label{apdx:le:upper_bound_new}
\aftersubsec
We first upper bound \eqref{eq:key_estimate10} by using $\sigma_t^2 := \Exps{\zeta_t}{\norms{u_t - \nabla{f}(x_t)}^2}$  and then taking the full expectation over $\Fc_t := \sigma(v_0, v_1, \cdots, v_t)$ as
\begin{equation*}
\begin{array}{ll}
\Exp{\norms{v_t - \nabla{f}(x_t)}^2} &\leq  \beta_{t-1}^2\Exp{\norms{v_{t-1}  - \nabla{f}(x_{t-1})}^2} + \beta_{t-1}^2\Exp{\norms{\nabla{f}(x_t;\xi_t) - \nabla{f}(x_{t-1};\xi_t)}^2} \vspace{1ex}\\
& +{~} (1-\beta_{t-1})^2\sigma_t^2 \vspace{1ex}\\
&\overset{\tiny\eqref{eq:L_smooth}}{\leq}  \beta_{t-1}^2\Exp{\norms{v_{t-1}  - \nabla{f}(x_{t-1})}^2} + \beta_{t-1}^2L^2\Exp{\norms{x_t - x_{t-1}}^2} + (1-\beta_{t-1})^2\sigma_t^2.
\end{array}
\end{equation*}
If we define $a_t^2 := \Exp{\norms{ v_t - \nabla{f}(x_t)}^2}$, then\ the above inequality can lead to
\begin{equation*} 
a_t^2 \leq \beta_{t-1}^2a_{t-1}^2 + \beta_{t-1}^2L^2\Exp{\norms{x_{t} - x_{t-1}}^2} + (1-\beta_{t-1})^2\sigma^2_t.
\end{equation*}
Denote $b_{t-1}^2 := \Exp{\norms{x_t-x_{t-1}}^2}$. 
Then, we have from the last inequality that
\begin{equation*} 
a_t^2 \leq \beta_{t-1}^2a_{t-1}^2 +  L^2\beta_{t-1}^2b_{t-1}^2 + (1-\beta_{t-1})^2\sigma^2_t.
\end{equation*}
By induction, this inequality implies 
\begin{equation*} 
\begin{array}{ll}
a_t^2 &\leq {~} \beta_{t-1}^2a_{t-1}^2 +  L^2\beta_{t-1}^2b_{t-1}^2 + (1-\beta_{t-1})^2\sigma^2_t \vspace{1ex}\\
&\leq {~} \beta_{t-1}^2\big[\beta_{t-2}^2a_{t-2}^2 +  L^2\beta_{t-2}^2b_{t-2}^2 + (1-\beta_{t-2})^2\sigma^2\big] +  L^2\beta_{t-1}^2b_{t-1}^2 + (1-\beta_{t-1})^2\sigma^2_t \vspace{1ex}\\
&= {~} \beta_{t-1}^2\beta_{t-2}^2a_{t-2}^2 +  L^2\beta_{t-1}^2\beta_{t-2}^2b_{t-2}^2 + L^2\beta_{t-1}^2b_{t-1}^2 + \big[(1-\beta_{t-1})^2\sigma^2_t + \beta_{t-1}^2(1-\beta_{t-2})^2\sigma^2_{t-1}\big] \vspace{1ex}\\
&\leq {~} \beta_{t-1}^2\beta_{t-2}^2\big[\beta_{t-3}^2a_{t-3}^2 +  L^2\beta_{t-3}^2b_{t-3}^2 + (1-\beta_{t-3})^2\sigma^2_{t-2}\big] \vspace{1ex}\\
& + {~} L^2\beta_{t-1}^2\beta_{t-2}^2b_{t-2}^2 + L^2\beta_{t-1}^2b_{t-1}^2 + \big[(1-\beta_{t-1})^2\sigma^2_t + \beta_{t-1}^2(1-\beta_{t-2})^2\sigma^2_{t-1}\big] \vspace{1ex}\\
&= {~} \beta_{t-1}^2\beta_{t-2}^2\beta_{t-3}^2a_{t-3}^2 +  L^2\beta_{t-1}^2\beta_{t-2}^2\beta_{t-3}^2b_{t-3}^2 +   L^2\beta_{t-1}^2\beta_{t-2}^2b_{t-2}^2 \vspace{1ex}\\
& + {~} L^2\beta_{t-1}^2b_{t-1}^2 + \big[(1-\beta_{t-1})^2\sigma^2_t + \beta_{t-1}^2(1-\beta_{t-2})^2\sigma^2_{t-1} + \beta_{t-1}^2\beta_{t-2}^2(1-\beta_{t-3})^2\sigma^2_{t-2}\big] \vspace{1ex}\\
& \cdots \cdots \vspace{1ex}\\
&\leq {~} (\beta_{t-1}^2\cdots\beta_0^2)a_0^2 + L^2(\beta_{t-1}^2\cdots\beta_0^2)b_0^2 + L^2(\beta_{t-1}^2\cdots\beta_1^2)b_1^2 + \cdots + L^2\beta_{t-1}^2b_{t-1}^2 \vspace{1ex}\\
&+ {~} \big[(1-\beta_{t-1})^2\sigma^2_t + \beta_{t-1}^2(1-\beta_{t-2})^2\sigma^2_{t-1} + \beta_{t-1}^2\beta_{t-2}^2(1-\beta_{t-3})^2\sigma^2_{t-2} + \cdots \vspace{1ex}\\
&{~~~}  + {~} \beta_{t-1}^2\beta_{t-2}^2\cdots\beta_1^2(1-\beta_0)^2\sigma^2_1\big].
\end{array}
\end{equation*}
Here, we use a convention that $\prod_{i = t+1}^t \beta_i^2 = 1$.
As a result, it can be rewritten in a compact form as
\begin{equation}\label{eq:key_estimate1d}
a_t^2 \leq \Big(\prod_{i=1}^{t}\beta_{i-1}^2\Big)a_0^2 + L^2\sum_{i=0}^{t-1}\Big(\prod_{j=i+1}^{t}\beta_{j-1}^2\Big)b_i^2 + \sum_{i=0}^{t-1}\Big(\prod_{j=i+2}^{t}\beta_{j-1}^2\Big)(1-\beta_i)^2\sigma^2_{i+1}.
\end{equation}
Define $\omega_{t} := \prod_{i=1}^{t}\beta_{i-1}^2$, $\omega_{i, t} := \prod_{j=i+1}^{t}\beta_{j-1}^2$, and $S_{t} := \sum_{i=0}^{t-1}s_i = \sum_{i=0}^{t-1}\big(\prod_{j=i+2}^{t}\beta_{j-1}^2\big)(1-\beta_i)^2\sigma^2_{i+1}$ with $s_i := (1-\beta_i)^2\sigma^2_{i+1}\big(\prod_{j=i+2}^{t}\beta_{j-1}^2\big)$.
Then, we can rewrite \eqref{eq:key_estimate1d} as
\begin{equation*}
a_t^2 \leq \omega_{t}a_0^2 + L^2\sum_{i=0}^{t-1}\omega_{i,t}b_i^2 + S_{t},
\end{equation*}
which is exactly \eqref{eq:vt_variance_bound_new}.
\Eproof

\beforesec
\section{Appendix: Convergence analysis of Algorithm~\ref{alg:A1} and Algorithm~\ref{alg:A2}}\label{apdx:sec:convergence-analysis}
\aftersec
We provide the full convergence analysis for  Algorithm~\ref{alg:A1} and Algorithm~\ref{alg:A2} in the single-sample case.

\beforesubsec
\subsection{The proof of Lemma~\ref{le:key_estimate_of_convergence}: One-iteration analysis}\label{apdx:le:key_estimate_of_convergence}
\aftersubsec
The following lemma provides a key estimate for convergence analysis of Algorithm~\ref{alg:A1}.  

\begin{lemma}\label{le:key_estimate_of_convergence}
Let $\sets{x_t}$ be the sequence generated by Algorithm~\ref{alg:A1}.
Then, under Assumption~\ref{as:A1}, we have the following estimate:
\begin{equation}\label{eq:key_estimate_31}
\begin{array}{ll}
\Exp{f(x_{m+1})} &\leq \Exp{f(x_0)} - \displaystyle\frac{1}{2}\displaystyle\sum_{t=0}^m\eta_t\Exp{\norms{\nabla{f}(x_t)}^2} \\
& + {~} \dfrac{1}{2}\Big(\displaystyle\sum_{t=0}^m\eta_t\omega_t\Big)\Exp{\norms{v_0 - \nabla{f}(x_0)}^2}  + \frac{1}{2}\Big(\displaystyle\sum_{t=0}^m\eta_tS_t\Big) + \frac{1}{2}\Tc_m,
\end{array}
\end{equation}
where 
\begin{equation}\label{eq:T_m}
\Tc_m :=   L^2\sum_{t=1}^m \eta_t \sum_{i=0}^{t-1}\omega_{i,t}\eta_i^2\Exp{\norms{v_i}^2} - \sum_{t=0}^m\big(\eta_t -  L\eta_t^2\big)\Exp{\norms{v_t}^2},
\end{equation}
and $\omega_t$, $\omega_{i,t}$, and $S_t$ are defined in Lemma~\ref{le:upper_bound_new}.
\end{lemma}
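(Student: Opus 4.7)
The plan is to start from the standard descent lemma and then plug in the variance bound from Lemma~\ref{le:upper_bound_new}. Since Assumption~\ref{as:A1}(b) gives $L$-average smoothness, the function $f$ itself is $L$-smooth, so for each $t$ I have
\begin{equation*}
f(x_{t+1}) \leq f(x_t) + \iprod{\nabla f(x_t), x_{t+1} - x_t} + \frac{L}{2}\norms{x_{t+1} - x_t}^2.
\end{equation*}
Substituting the update $x_{t+1} = x_t - \eta_t v_t$ from Step~\ref{step:i3} and using the polarization identity $\iprod{\nabla f(x_t), v_t} = \tfrac{1}{2}[\norms{\nabla f(x_t)}^2 + \norms{v_t}^2 - \norms{v_t - \nabla f(x_t)}^2]$, this becomes
\begin{equation*}
f(x_{t+1}) \leq f(x_t) - \frac{\eta_t}{2}\norms{\nabla f(x_t)}^2 + \frac{\eta_t}{2}\norms{v_t - \nabla f(x_t)}^2 - \frac{\eta_t - L\eta_t^2}{2}\norms{v_t}^2.
\end{equation*}

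Next I would telescope this inequality over $t=0,\dots,m$ and take the full expectation over $\Fc_m$, yielding
\begin{equation*}
\Exp{f(x_{m+1})} \leq \Exp{f(x_0)} - \frac{1}{2}\sum_{t=0}^m\eta_t\Exp{\norms{\nabla f(x_t)}^2} + \frac{1}{2}\sum_{t=0}^m\eta_t\Exp{\norms{v_t - \nabla f(x_t)}^2} - \frac{1}{2}\sum_{t=0}^m(\eta_t - L\eta_t^2)\Exp{\norms{v_t}^2}.
\end{equation*}
The only nontrivial piece is the middle sum, which I would dispatch with Lemma~\ref{le:upper_bound_new}: substituting $\norms{x_{i+1} - x_i}^2 = \eta_i^2\norms{v_i}^2$ into \eqref{eq:vt_variance_bound_new} gives
\begin{equation*}
\Exp{\norms{v_t - \nabla f(x_t)}^2} \leq \omega_t\Exp{\norms{v_0 - \nabla f(x_0)}^2} + L^2\sum_{i=0}^{t-1}\omega_{i,t}\eta_i^2\Exp{\norms{v_i}^2} + S_t.
\end{equation*}

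Multiplying by $\eta_t/2$, summing over $t$, and noting that the inner double sum vanishes when $t=0$ so it may be started at $t=1$, the three resulting terms match exactly the three terms $\tfrac{1}{2}(\sum_t \eta_t\omega_t)\Exp{\norms{v_0 - \nabla f(x_0)}^2}$, $\tfrac{L^2}{2}\sum_{t=1}^m\eta_t\sum_{i=0}^{t-1}\omega_{i,t}\eta_i^2\Exp{\norms{v_i}^2}$, and $\tfrac{1}{2}\sum_t \eta_tS_t$ that appear in the target inequality. Combining with the $-\tfrac{1}{2}(\eta_t - L\eta_t^2)\Exp{\norms{v_t}^2}$ terms gives precisely $\tfrac{1}{2}\Tc_m$ as defined in \eqref{eq:T_m}, and \eqref{eq:key_estimate_31} follows. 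There is no real obstacle; the only care needed is the index bookkeeping when folding the double sum together with the $v_t$-norm terms into the single expression $\Tc_m$.
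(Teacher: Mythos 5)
Your proposal is correct and follows essentially the same route as the paper: the descent lemma with the polarization identity $\iprods{\nabla f(x_t), v_t} = \tfrac{1}{2}[\norms{\nabla f(x_t)}^2 + \norms{v_t}^2 - \norms{v_t-\nabla f(x_t)}^2]$, then the variance bound of Lemma~\ref{le:upper_bound_new} with $\norms{x_{i+1}-x_i}^2 = \eta_i^2\norms{v_i}^2$, summed over $t$ and repackaged into $\Tc_m$. The only difference is cosmetic ordering (you telescope before invoking the variance bound, the paper applies it per iteration and then sums), and your side remark that $L$-average smoothness implies $L$-smoothness of $f$ via Jensen is a valid justification of a step the paper leaves implicit.
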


\begin{proof}
First, from the $L$-smoothness of $f$, we have
\begin{equation*} 
\begin{array}{ll}
f(x_{t+1}) &\leq f(x_t) - \eta_t\iprods{\nabla{f}(x_t), v_t} + \frac{L\eta_t^2}{2}\norms{v_t}^2 \vspace{1ex}\\
&=  f(x_t) - \frac{\eta_t}{2}\norms{\nabla{f}(x_t)}^2 - \big(\frac{\eta_t}{2} - \frac{L\eta_t^2}{2}\big)\norms{v_t}^2 + \frac{\eta_t}{2}\norms{v_t - \nabla{f}(x_t)}^2.
\end{array}
\end{equation*}
Taking the expectation over the randomness  $(\xi_t, \zeta_t)$ of this estimate, we obtain
\begin{equation*} 
\begin{array}{ll}
\Exps{(\xi_t,\zeta_t)}{f(x_{t+1})} &\leq  f(x_t) - \frac{\eta_t}{2}\Exps{(\xi_t,\zeta_t)}{\norms{\nabla{f}(x_t)}^2} - \frac{\eta_t}{2}\big(1 - L\eta_t\big)\Exps{(\xi_t,\zeta_t)}{\norms{v_t}^2} \vspace{1ex}\\
& + {~} \frac{\eta_t}{2}\Exps{(\xi_t,\zeta_t)}{\norms{v_t - \nabla{f}(x_t)}^2}.
\end{array}
\end{equation*}
Taking the full expectation over the entire history up to the $t$-th iteration, and then using \eqref{eq:vt_variance_bound_new} and noting that $x_t - x_{t-1} = -\eta_{t-1}v_{t-1}$, we obtain
\begin{equation}\label{eq:est5b}
\begin{array}{ll}
\Exp{f(x_{t+1})} &\leq  \Exp{f(x_t)} - \frac{\eta_t}{2}\Exp{\norms{\nabla{f}(x_t)}^2} - \frac{\eta_t}{2}\big(1 - L\eta_t\big)q_t^2 + \frac{\eta_t}{2}a_t^2 \vspace{1ex}\\
&\leq \Exp{f(x_t)} - \frac{\eta_t}{2}\Exp{\norms{\nabla{f}(x_t)}^2} - \frac{\eta_t}{2}\big(1 - L\eta_t\big)q_t^2 \vspace{1ex}\\
& + {~} \frac{\eta_t}{2}\Big[\omega_{t}a_0^2 +  L^2\sum_{i=0}^{t-1}\omega_{i,t}\eta_i^2q_i^2 + S_{t}\Big],
\end{array}
\end{equation}
%
where $q_t^2 := \Exp{\norms{v_t}^2}$ and $a_t^2 := \Exp{\norms{v_t - \nabla{f}(x_t)}^2}$.
Here, we use  $b_{t-1}^2 := \Exp{\norms{x_t - x_{t-1}}^2} = \eta_{t-1}^2\Exp{\norms{v_{t-1}}^2} = \eta_{t-1}^2q_{t-1}^2$ in the last inequality.

Summing up \eqref{eq:est5b} from $t=0$ to $t=m$, we obtain
\begin{equation}\label{eq:est5c}
\begin{array}{ll}
\Exp{f(x_{m+1})} &\leq \Exp{f(x_0)} - \sum_{t=0}^m\frac{\eta_t}{2}\Exp{\norms{\nabla{f}(x_t)}^2} - \sum_{t=0}^m\frac{\eta_t}{2}\big(1 - L\eta_t\big)q_t^2 \vspace{1ex}\\
& + {~} \frac{1}{2}\left(\sum_{t=0}^m\omega_t\eta_t\right)a_0^2 + \frac{1}{2}\left(\sum_{t=0}^m\eta_tS_t\right) +  \frac{L^2}{2}\sum_{t=0}^m\eta_t\sum_{i=0}^{t-1}\omega_{i,t}\eta_i^2q_i^2.
\end{array}
\end{equation}
Let us define $T_m$ as in \eqref{eq:T_m}, i.e.:
\begin{equation*} 
\Tc_m :=  L^2\sum_{t=1}^m \eta_t \sum_{i=0}^{t-1}\omega_{i,t}\eta_i^2q_i^2 - \sum_{t=0}^m\eta_t\big(1 -  L\eta_t\big)q_t^2.
\end{equation*}
Then, we obtain from \eqref{eq:est5c} the estimate \eqref{eq:key_estimate_31}.
\end{proof}

\beforesubsec
\subsection{The proof of Theorem~\ref{th:singe_loop_const_step}: Single-loop with constant step-size}\label{apdx:th:singe_loop_const_step}
\aftersubsec
We analyze the case $\beta_t = \beta \in (0, 1)$ fixed and the step-size $\eta_t = \eta > 0$ fixed.
From Lemma~\ref{le:upper_bound_new}, we have $\omega_t = \beta^{2t}$, $\omega_{i,t} = \beta^{2(t-i)}$, and 
\begin{equation*}
\begin{array}{ll}
s_t &:= \sum_{i=0}^{t-1}\big(\prod_{j=i+2}^{t}\beta_{j-1}^2\big)(1-\beta_i)^2 \vspace{1ex}\\
& = (1-\beta)^2\big[1 + \beta^2 + \beta^{4} + \cdots + \beta^{2(t-1)}\big] \vspace{1ex}\\
&= (1-\beta)^2\Big[\frac{1-\beta^{2t}}{1-\beta^2}\Big] \vspace{1ex}\\
&< \frac{1-\beta}{1+\beta}.
\end{array}
\end{equation*}
In this case, by convention that $\omega_0 = 1$, we have
\begin{equation}\label{eq:bound_of_st_omegat}
\sum_{t=0}^ms_t < \frac{(1-\beta)(m+1)}{1+\beta}~~~\text{and}~~\sum_{t=0}^m\omega_t = 1 +  \frac{\beta^2(1-\beta^{2m})}{1-\beta^2} = \frac{1-\beta^{2(m+1)}}{1-\beta^2} < \frac{1}{1-\beta^2}.
\end{equation}
Now, to bound the quantity $\Tc_m$ defined by \eqref{eq:T_m}, we note that
\begin{equation*}
\begin{array}{ll}
\displaystyle\sum_{t=1}^m\sum_{i=0}^{t-1}\beta^{2(t-i)}q_i^2 &= \displaystyle\sum_{i=0}^{0}\beta^{2(1-i)}q_i^2 + \sum_{i=0}^{1}\beta^{2(2-i)}q_i^2 + \sum_{i=0}^{2}\beta^{2(3-i)}q_i^2 + \cdots + \sum_{i=0}^{m-1}\beta^{2(m-i)}q_i^2 \vspace{1ex}\\
&= \beta^2q_0^2 + \big[\beta^4q_0^2 + \beta^2q_1^2\big] + \big[\beta^6q_0^2 + \beta^4q_1^2 + \beta^2q_0^2\big] + \cdots \vspace{1ex}\\
& + {~} \big[\beta^{2m}q_0^2 + \beta^{2(m-1)}q_1 + \cdots + \beta^2q_{m-1}^2\big] \vspace{1ex}\\
&= \beta^2\big[1 + \beta^2 + \cdots + \beta^{2(m-1)}\big]q_0^2 + \beta^2\big[1 + \beta^2 + \cdots + \beta^{2(m-2)}\big]q_1^2 + \cdots \vspace{1ex}\\ &+ {~} \beta^2\big[1 + \beta^2\big]q_{m-2}^2 + \beta^2q_{m-1}^2\vspace{1ex}\\
&= \frac{\beta^2}{1-\beta^2}\Big[ (1-\beta^{2m})q_0^2 + (1-\beta^{2(m-1)})q_1^2 + \cdots + (1-\beta^2)q_{m-1}^2\Big].
\end{array}
\end{equation*}
Using this expression, we can write $\Tc_m$ from  \eqref{eq:T_m} as
\begin{equation}\label{eq:T_m_constant}
\begin{array}{ll}
\Tc_m &= \eta\Big[\frac{\beta^2(1-\beta^{2m})L^2\eta^2}{1-\beta^2} - (1-L\eta)\Big]q_0^2 +  \eta\Big[\frac{\beta^2(1-\beta^{2(m-1)})L^2\eta^2}{1-\beta^2} - (1-L\eta)\Big]q_1^2 + \cdots \vspace{1ex}\\
&+ {~}  \eta\Big[\frac{\beta^2(1-\beta^{2})L^2\eta^2}{1-\beta^2} - (1-L\eta)\Big]q_{m-1}^2 - \eta(1-L\eta)q_{m}^2.
\end{array}
\end{equation}
To guarantee $\Tc_m \leq 0$, from \eqref{eq:T_m_constant}, we need to choose
\begin{equation}\label{eq:cond_of_eta}
\left\{\begin{array}{ll}
\frac{L^2\eta^2\beta^2(1-\beta^{2m})}{1-\beta^2} - (1-L\eta) &\leq 0\vspace{1ex}\\
\frac{L^2\eta^2\beta^2(1-\beta^{2(m-1)})}{1-\beta^2} - (1-L\eta) &\leq 0\vspace{1ex}\\
\cdots & \cdots \vspace{1ex}\\
\frac{L^2\eta^2\beta^2(1-\beta^2)}{1-\beta^2} - (1-L\eta) &\leq 0\vspace{1ex}\\
- (1 - L\eta) &\leq 0.
\end{array}\right.
\end{equation}
Clearly, since $1 - \beta^{2(m-i)} \geq 1 - \beta^2$ for $i=0,\cdots, m-1$, if we define $\alpha_m^2 := \frac{\beta^2(1-\beta^{2m})}{1-\beta^2}$, then 
the condition \eqref{eq:cond_of_eta} holds if $L^2\eta^2\alpha^2_m - (1-L\eta) \leq 0$.
By tightening this condition, we obtain a quadratic equation $L^2\eta^2\alpha_m^2 - (1-L\eta) = 0$ in $\eta$, which leads to 
\begin{equation}\label{eq:step_size}
\eta := \frac{2}{L(\sqrt{1 + 4\alpha_m^2} + 1)}~~~\text{with}~~\alpha_m^2 :=  \frac{\beta^2(1-\beta^{2m})}{1-\beta^2}.
\end{equation}
Note that since $\alpha_m^2 \leq \frac{\beta^2}{1-\beta^2}$, we have $\eta \geq \underline{\eta}: = \frac{2\sqrt{1-\beta^2}}{L(\sqrt{1-\beta^2} + \sqrt{1 + 3\beta^2})}$.
In that case, by using \eqref{eq:bound_of_st_omegat} and \eqref{eq:cond_of_eta}, \eqref{eq:key_estimate_31} reduces to
\begin{equation}\label{eq:est5c_1}
\begin{array}{ll}
\Exp{f(x_{m+1})} &\overset{\tiny\eqref{eq:bound_of_st_omegat}}{\leq} \Exp{f(x_0)} - \frac{\eta}{2}\displaystyle\sum_{t=0}^m\Exp{\norms{\nabla{f}(x_t)}^2} \vspace{1ex}\\
& + {~} \frac{\eta(1-\beta^{2(m+1)})}{2(1-\beta^2)}\Exp{\norms{v_0 - \nabla{f}(x_0)}^2}  + \Big[\frac{(1-\beta)(m+1)}{1+\beta}\Big]\frac{\eta\sigma^2}{2}.
\end{array}
\end{equation}
Note that $\Exp{\norms{v_0 - \nabla{f}(x_0)}^2} \leq \frac{\sigma^2}{b}$  and $\Exp{f(x_{m+1})} \geq f^{\star}$, we can further bound \eqref{eq:est5c_1} as
\begin{equation*}
 \frac{\eta}{2}\displaystyle\sum_{t=0}^m\Exp{\norms{\nabla{f}(x_t)}^2}  \leq \Exp{f(x_0)} - f^{\star} + \frac{\eta\sigma^2}{2(1+\beta)}\Big[\frac{1}{(1-\beta)b} + (1-\beta)(m+1)\Big].
\end{equation*}
Multiplying both sides  of this inequality by $\frac{2}{\eta(m+1)}$, and then using the lower bound of $\eta$ from \eqref{eq:step_size}, we obtain
\begin{equation}\label{eq:main_estimate_for_single_loop}
\begin{array}{ll}
 \frac{1}{m+1}\displaystyle\sum_{t=0}^m\Exp{\norms{\nabla{f}(x_t)}^2}  &\leq \frac{2}{\eta(m+1)}\Big[\Exp{f(x_0)} - f^{\star}\Big] + \frac{\sigma^2}{(1+\beta)}\Big[\frac{1}{(1-\beta)b(m+1)} + (1-\beta)\Big] \vspace{1ex}\\
 &\leq  \frac{L}{(m+1)}\left(\frac{\sqrt{1-\beta^2} + \sqrt{1+3\beta^2}}{\sqrt{1-\beta^2}}\right)\Big[\Exp{f(x_0)} - f^{\star}\Big] \vspace{1ex}\\
 & + {~} \frac{\sigma^2}{(1+\beta)}\Big[\frac{1}{(1-\beta)b(m+1)} + (1-\beta)\Big].
 \end{array}
\end{equation}
Let us choose $\beta := 1 - \frac{c_1}{\sqrt{b(m+1)}}$ for some $0 < c_1 < \sqrt{b(m+1)}$.
In this case, the last two terms of the right-hand side of \eqref{eq:main_estimate_for_single_loop} become
\begin{equation*}
\frac{1}{(1-\beta)b(m+1)} + (1-\beta) = \left(c_1 + \frac{1}{c_1}\right)\frac{1}{\sqrt{b(m+1)}}.
\end{equation*}
With this choice of $\beta$, \eqref{eq:main_estimate_for_single_loop} leads to 
\begin{equation}\label{eq:single_loop} 
\begin{array}{ll}
 \frac{1}{m+1}\displaystyle\sum_{t=0}^m\Exp{\norms{\nabla{f}(x_t)}^2}  &\leq  \frac{L}{(m+1)}\left(\frac{\sqrt{1-\beta^2} + \sqrt{1+3\beta^2}}{\sqrt{1-\beta^2}}\right)\Big[\Exp{f(x_0)} - f^{\star}\Big]  \vspace{1ex}\\
 & + {~}  \left(c_1 + \frac{1}{c_1}\right)\frac{\sigma^2}{(1+\beta)\sqrt{b(m+1)}}.
 \end{array}
\end{equation}
(a)~Since $\beta = 1 - \frac{c_1}{\sqrt{b(m+1)}} < 1$ and $c_1 < \sqrt{b(m+1)}$, we have 
\begin{equation*}
1 - \beta^2 = 1 - \Big(1 - \frac{c_1}{\sqrt{b(m+1)}} \Big)^2 = \frac{2c_1}{\sqrt{b(m+1)}} - \frac{c_1^2}{b(m+1)} = \frac{2c_1\sqrt{b(m+1)} - c_1^2}{b(m+1)} > \frac{c_1}{\sqrt{b(m+1)}},
\end{equation*} 
and $\sqrt{1 - \beta^2} + \sqrt{1+3\beta^2} \leq 1 + \sqrt{1+3\beta^2} \leq 3$.
On the other hand, from \eqref{eq:step_size}, we have
\begin{equation}\label{eq:LR_lower_bound}
\eta \geq \underline{\eta} = \frac{2\sqrt{1-\beta^2}}{L(\sqrt{1-\beta^2} + \sqrt{1 + 3\beta^2})} \geq  \frac{2\sqrt{c_1}}{3\nhan{L}\big[ b(m+1)\big]^{1/4}}.
\end{equation}
This proves (a).

Let us define define $f^0 := f(x_0)$. 
Then, using $\beta < 1$ and \eqref{eq:LR_lower_bound} into \eqref{eq:single_loop}, we get
\begin{equation*} 
\frac{1}{m+1}\displaystyle\sum_{t=0}^m\Exp{\norms{\nabla{f}(x_t)}^2} \leq \frac{3 L b^{1/4}}{\sqrt{c_1}(m+1)^{3/4}}\big[f^0 - f^{\star}\big] + \left(c_1 + \frac{1}{c_1}\right)\frac{\sigma^2}{\sqrt{b(m+1)}}.
\end{equation*}
(b)~Let us choose $b := c_2\sigma^{8/3}(m+1)^{1/3}$ for some constant $c_2 > 0$.
Then the last estimate becomes
\begin{equation}\label{eq:single_loop_final2} 
\frac{1}{m+1}\displaystyle\sum_{t=0}^m\Exp{\norms{\nabla{f}(x_t)}^2} \leq \frac{\sigma^{2/3}}{(m+1)^{2/3}}\left[\frac{3 L c_2^{1/4}}{\sqrt{c_1}}\big[f^0 - f^{\star}\big] + \left(c_1 + \frac{1}{c_1}\right)\frac{1}{\sqrt{c_2}}\right].
\end{equation}
To guarantee $\frac{1}{m+1}\displaystyle\sum_{t=0}^m\Exp{\norms{\nabla{f}(x_t)}^2} \leq \varepsilon^2$, from \eqref{eq:single_loop_final2} we need to set 
\begin{equation*}
\frac{\sigma^{2/3}}{(m+1)^{2/3}}\left[\frac{3 L c_2^{1/4}}{\sqrt{c_1}}\big[f^0 - f^{\star}\big] + \left(c_1 + \frac{1}{c_1}\right)\frac{1}{\sqrt{c_2}}\right] \leq \varepsilon^2.
\end{equation*}
This leads to $m+1 \geq \frac{\sigma}{\varepsilon^3}\left[\frac{3 L c_2^{1/4}}{\sqrt{c_1}}\big[f^0 - f^{\star}\big] + \left(c_1 + \frac{1}{c_1}\right)\frac{1}{\sqrt{c_2}}\right]^{3/2}$.
Therefore, we can choose $m$ as shown in \eqref{eq:choice_of_m}.

Finally, if $c_1 = 1$, then the number of stochastic gradient evaluations is $\Tc_{ge}$ is
\begin{equation*}
\begin{array}{ll}
\Tc_{ge} &= b + 3m =  c_2\sigma^{8/3}(m + 1)^{1/3} +  \frac{3\sigma}{\varepsilon^3}\left[ 3 L c_2^{1/4}\big[f^0 - f^{\star}\big] + \frac{2}{\sqrt{c_2}}\right]^{3/2} \vspace{1ex}\\
& =  \frac{c_2\sigma^3}{\varepsilon}\left[ 3 L c_2^{1/4}\big[f^0 - f^{\star}\big] + \frac{2}{\sqrt{c_2}}\right]^{1/2} + \frac{3\sigma}{\varepsilon^3}\left[ 3 L c_2^{1/4}\big[f^0 - f^{\star}\big] + \frac{2}{\sqrt{c_2}}\right]^{3/2}  \vspace{1ex}\\
& =  \frac{\sigma^3}{\varepsilon}\left[ 3 L c_2^{9/4}\big[f^0 - f^{\star}\big] + c_2^{3/2}\right]^{1/2} + \frac{3\sigma}{\varepsilon^3}\left[ 3 L c_2^{1/4}\big[f^0 - f^{\star}\big] + \frac{2}{\sqrt{c_2}}\right]^{3/2}  \vspace{1ex}\\
& = \BigO{\frac{\sigma}{\varepsilon^3} + \frac{\sigma^3}{\varepsilon}},
\end{array}
\end{equation*}
which proves \eqref{eq:overall_complexity2}.
\Eproof

\beforesubsec
\subsection{The proof of Theorem~\ref{th:singe_loop_adapt_step}: Single-loop with adaptive step-size}\label{apdx:th:singe_loop_adapt_step}
\aftersubsec
First, from Lemma~\ref{le:key_estimate_of_convergence}, we have
\begin{equation}\label{eq:key_estimate_300}
\begin{array}{ll}
\Exp{f(x_{m+1})} &\leq \Exp{f(x_0)} - \displaystyle\frac{1}{2}\displaystyle\sum_{t=0}^m\eta_t\Exp{\norms{\nabla{f}(x_t)}^2} \vspace{1ex}\\
& + \dfrac{1}{2}\Big(\displaystyle\sum_{t=0}^m\eta_t\omega_t\Big)\Exp{\norms{v_0 - \nabla{f}(x_0)}^2}  + \frac{1}{2}\Big(\displaystyle\sum_{t=0}^m\eta_tS_t\Big) + \frac{1}{2}\Tc_m,
\end{array}
\end{equation}
where 
\begin{equation}\label{eq:T_m2}
\Tc_m :=   L^2\sum_{t=1}^m \eta_t \sum_{i=0}^{t-1}\omega_{i,t}\eta_i^2\Exp{\norms{v_i}^2} - \sum_{t=0}^m\big(\eta_t -  L\eta_t^2\big)\Exp{\norms{v_t}^2},
\end{equation}
and $\omega_t$, $\omega_{i,t}$, and $s_t$ are defined in Lemma~\ref{le:upper_bound_new}.

If we fix $\beta_t = \beta \in (0, 1)$, then we can show that $\omega_t = \beta^{2t}$, $\omega_{i,t} = \beta^{2(t-i)}$, and $s_t =  (1-\beta)^2\Big[\frac{1-\beta^{2t}}{1-\beta^2}\Big] < \frac{1-\beta}{1+\beta}$ as in the proof of Theorem~\ref{th:singe_loop_const_step}.

Now, let $u_i^2 := \Exp{\norms{v_i}^2}$.
To bound the quantity $\Tc_m$ defined by \eqref{eq:T_m}, we note that
\begin{equation*}
\begin{array}{ll}
\displaystyle\sum_{t=1}^m\eta_t\sum_{i=0}^{t-1}\beta^{2(t-i)}\eta_i^2u_i^2 &= \eta_1\displaystyle\sum_{i=0}^{0}\beta^{2(1-i)}\eta_i^2u_i^2 + \eta_2\sum_{i=0}^{1}\beta^{2(2-i)}\eta_i^2u_i^2 \vspace{1ex}\\
& +{~} \eta_3\displaystyle\sum_{i=0}^{2}\beta^{2(3-i)}\eta_i^2u_i^2 + \cdots + \eta_m\displaystyle\sum_{i=0}^{m-1}\beta^{2(m-i)}\eta_i^2u_i^2 \vspace{1ex}\\
&= \beta^2\eta_1\eta_0^2u_0^2 + \eta_2\big[\beta^4\eta_0^2u_0^2 + \beta^2\eta_1^2u_1^2\big] \vspace{1ex}\\
& + {~} \eta_3\big[\beta^6\eta_2^2u_0^2 + \beta^4\eta_1^2u_1^2 + \beta^2\eta_2^2u_2^2\big] + \cdots \vspace{1ex}\\
& + {~} \eta_m\big[\beta^{2m}\eta_0^2u_0^2 + \beta^{2(m-1)}\eta_1^2u_1^2 + \cdots + \beta^2\eta_{m-1}^2u_{m-1}^2\big] \vspace{1ex}\\
&= \beta^2\eta_0^2\big[\eta_1 + \beta^2\eta_2 + \cdots + \beta^{2(m-1)}\eta_m\big]u_0^2 \vspace{1ex}\\
& + {~} \beta^2\eta_1^2\big[\eta_2 + \beta^2\eta_3 + \cdots + \beta^{2(m-2)}\eta_{m}\big]u_1^2 + \cdots \vspace{1ex}\\ 
&+ {~} \beta^2\eta_{m-2}^2\big[\eta_{m-1} + \beta^2\eta_{m}\big]u_{m-2}^2 + \beta^2\eta_{m-1}^2\eta_m u_{m-1}^2. 
\end{array}
\end{equation*}
Using this expression, we can write $\Tc_m$ from  \eqref{eq:T_m} as
\begin{equation*} 
\begin{array}{ll}
\Tc_m &= \eta_0\Big[L^2\beta^2\eta_0\big[\eta_1 + \beta^2\eta_2 + \cdots + \beta^{2(m-1)}\eta_m\big]  - (1-L\eta_0)\Big]u_0^2 \vspace{1ex}\\
& + {~}  \eta_1\Big[ L^2\beta^2\eta_1\big[\eta_2 + \beta^2\eta_3 + \cdots + \beta^{2(m-2)}\eta_{m}\big] - (1-L\eta_1)\Big] + \cdots \vspace{1ex}\\
&+ {~}  \eta_{m-1}\Big[ L^2\beta^2\eta_{m-1}\eta_m - (1-L\eta_{m-1})\Big]u_{m-1}^2 - \eta_m(1-L\eta_m)u_{m}^2.
\end{array}
\end{equation*}
To guarantee $\Tc_m \leq 0$, from the last expression of $\Tc_m$, we can impose the following condition:
\begin{equation}\label{eq:cond_of_eta3}
\left\{\begin{array}{ll}
L^2\beta^2\eta_0\big[\eta_1 + \beta^2\eta_2 + \cdots + \beta^{2(m-1)}\eta_m\big]  - (1-L\eta_0) &= 0\vspace{1ex}\\
L^2\beta^2\eta_1\big[\eta_2 + \beta^2\eta_3 + \cdots + \beta^{2(m-2)}\eta_{m}\big] - (1-L\eta_1)&= 0\vspace{1ex}\\
\cdots & \cdots \vspace{1ex}\\
L^2\beta^2\eta_{m-1}\eta_m - (1-L\eta_{m-1}) &= 0\vspace{1ex}\\
- (1 - L\eta_{m}) &= 0.
\end{array}\right.
\end{equation}
The condition \eqref{eq:cond_of_eta3} leads to the following update of $\eta_t$:
\begin{equation*} 
\eta_m := \frac{1}{L},~~~\text{and}~~\eta_t := \frac{1}{L + L^2\big[\beta^2\eta_{t+1} + \beta^4\eta_{t+2} + \cdots + \beta^{2(m-t)}\eta_m\big]},~~~t=0,\cdots, m-1,
\end{equation*}
which is exactly \eqref{eq:update_of_eta_t}.

Next, note that $\beta^2 = \Big(1 - \frac{c_1}{\sqrt{b(m+1)}}\Big)^2 = 1 - \frac{2c_1}{\sqrt{b(m+1)}} + \frac{c_1^2}{b(m+1)}$.
Therefore, $1-\beta^2 = \frac{2c_1}{\sqrt{b(m+1)}} - \frac{c_1^2}{b(m+1)} \geq \frac{c_1}{\sqrt{b(m+1)}}$, which implies $\sqrt{1-\beta^2} \geq \frac{\sqrt{c_1}}{(b(m+1))^{1/4}}$.
Using $\sqrt{1-\omega} = \sqrt{1-\beta^2} \geq \frac{\sqrt{c_1}}{(b(m+1))^{1/4}}$ into \eqref{eq:stepsize_pros} of Lemma~\ref{le:adaptive_step_size}, we can show that $\Sigma_m \geq \frac{\sqrt{c_1}(m+1)^{3/4}}{2\nhan{L}b^{1/4}}$ as in the first statement (a) of Theorem~\ref{th:singe_loop_adapt_step}.

Note that $\omega_t = \beta^{2t}$, by the Chebyshev sum inequality, we have
\begin{equation*}
\sum_{t=0}^m\omega_t\eta_t = \sum_{t=0}^m\beta^{2t}\eta_t \leq \frac{\Sigma_m}{(m+1)}(1 + \beta^2 + \cdots + \beta^{2m}) \leq  \frac{\Sigma_m}{(m+1)(1-\beta^2)}.
\end{equation*}
Utilizing this estimate, $\Exp{\norms{v_0 - \nabla{f}(x_0)}^2}  \leq \frac{\sigma^2}{b}$, and $S_t \leq \frac{(1-\beta)\sigma^2}{1+\beta}$  into \eqref{eq:key_estimate_300}, and noting that $\Tc_m \leq 0$, we have
\begin{equation*} 
\displaystyle\frac{1}{2}\displaystyle\sum_{t=0}^m\eta_t\Exp{\norms{\nabla{f}(x_t)}^2} \leq f(x_0) -  \Exp{f(x_{m+1})} + \frac{\Sigma_m\sigma^2}{2(1-\beta^2)b(m+1)} +  \frac{(1-\beta)\sigma^2}{2(1+\beta)}\Sigma_m.
\end{equation*}
Since $\Exp{f(x_{m+1})} \geq f^{\star}$, using this into the last estimate, and multiplying the result by $\frac{2}{\Sigma_m}$, we obtain
\begin{equation}\label{eq:key_estimate_301}
\displaystyle\frac{1}{\Sigma_m}\displaystyle\sum_{t=0}^m\eta_t\Exp{\norms{\nabla{f}(x_t)}^2} \leq \frac{2}{\Sigma_m}[f(x_0) -  f^{\star}]  + \frac{\sigma^2}{(1+\beta)}\left[\frac{1}{b(m+1)(1-\beta)} +  (1-\beta)\right].
\end{equation}
Since $\left[\frac{1}{b(m+1)(1-\beta)} +  (1-\beta)\right] = \left(c_1 + \frac{1}{c_1}\right)\frac{1}{\sqrt{b(m+1)}}$ for $\beta = 1- \frac{c_1}{\sqrt{b(m+1)}}$, \eqref{eq:key_estimate_301} leads to
\begin{equation}\label{eq:key_estimate_302}
\displaystyle\frac{1}{\Sigma_m}\displaystyle\sum_{t=0}^m\eta_t\Exp{\norms{\nabla{f}(x_t)}^2} \leq \frac{4Lb^{1/4}}{\sqrt{c_1}(m+1)^{3/4}}\left[f(x_0) -  f^{\star}\right]  +  \left(c_1 + \frac{1}{c_1}\right)\frac{\sigma^2}{\sqrt{b(m+1)}}.
\end{equation}
The second statement (b) of Theorem~\ref{th:singe_loop_adapt_step} is proved similarly as in Theorem~\ref{th:singe_loop_const_step} using \eqref{eq:key_estimate_302}, and we omit the details.
\Eproof

\beforesubsec
\subsection{The proof of Theorem~\ref{th:double_loop_convergence}: Double-loop with constant step-size}\label{apdx:th:double_loop_convergence}
\aftersubsec
Similar to the proof of \eqref{eq:single_loop}  in Theorem \ref{th:singe_loop_const_step}, we have
\begin{equation}\label{eq:single_loop2} 
\displaystyle\sum_{t=0}^m\Exp{\norms{\nabla{f}(x_t^{(s)})}^2}  \leq  \frac{2}{\eta}\Big[\Exp{f(x_0^{(s)})} - \Exp{f(x_{m+1}^{(s)})}\Big]   + \frac{2(m+1)\sigma^2}{\sqrt{b(m+1)}},
\end{equation}
where we use the superscript $s$ to indicate the stage $s$ in Algorithm~\ref{alg:A2}.
Summing up this inequality from $s=1$ to $s = S$, and then multiplying the result by $\frac{1}{(m+1)S}$ and using $\Exp{f(x_{m+1}^{(S)})} \geq f^{\star} > -\infty$, we get
\begin{equation}\label{eq:est5d_1}
\begin{array}{ll}
\frac{1}{S(m+1)}\displaystyle\sum_{s=1}^S\sum_{t=0}^m\Exp{\norms{\nabla{f}(x_t^{(s)})}^2} & \leq \frac{2}{\eta S(m+1)}\big[f(\tilde{x}^0) - f^{\star}\big] + \frac{2 \sigma^2}{\sqrt{b (m+1)}} \vspace{1ex}\\
& \leq \frac{3 L b^{1/4}}{S (m+1)^{3/4}} \big[f(\tilde{x}^0) - f^{\star}\big] + \frac{2 \sigma^2}{\sqrt{b (m+1)}}.
\end{array}
\end{equation}
Here, we use the fact that $\eta \geq  \frac{2}{3\nhan{L}\big[ b(m+1)\big]^{1/4}}$ from \eqref{eq:LR_lower_bound} in the last inequality.

If we choose $b := \frac{c_1\sigma^2}{\varepsilon^2}$ and $m + 1 :=  \frac{c_2\sigma^2}{\varepsilon^2}$ for some constants $c_1 > 0$ and $c_2 > 0$ and $c_1c_2 > 4$, then, from \eqref{eq:est5d_1}, to guarantee $\frac{1}{S(m+1)}\displaystyle\sum_{s=1}^S\sum_{t=0}^m\Exp{\norms{\nabla{f}(x_t^{(s)})}^2} \leq \varepsilon^2$, we require
\begin{align*}
	\frac{3 L b^{1/4}}{S (m+1)^{3/4}} \big[f^0 - f^{\star}\big] + \frac{2 \sigma^2}{\sqrt{b (m+1)}} &= \frac{3 L c_1^{1/4} \sigma^{1/2}}{\varepsilon^{1/2}} \cdot \frac{\varepsilon^{3/2}}{S c_2^{3/4} \sigma^{3/2}} \big[f^0 - f^{\star}\big] + \frac{2 \sigma^2 \varepsilon^2} {\sigma^2 \sqrt{c_1 c_2}} = \varepsilon^2 \\
	&  \Leftrightarrow \frac{3 L c_1^{1/4} \varepsilon}{S c_2^{3/4} \sigma} \big[f^0 - f^{\star}\big] = \left( 1 - \frac{2}{\sqrt{c_1 c_2}} \right) \varepsilon^2 \\
	& \Leftrightarrow S = \frac{3 L c_1^{1/4} \big[f^0 - f^{\star}\big]}{c_2^{3/4} \sigma \left( 1 - \frac{2}{\sqrt{c_1 c_2}} \right) \varepsilon}.
\end{align*}
Consequently,  the total complexity is
\begin{equation*}
\begin{array}{ll}
\Tc_{ge} &:= (b + 3m)S = (c_1 + 3 c_2) \frac{\sigma^2}{\varepsilon^2} \frac{3 L c_1^{1/4} \big[f^0 - f^{\star}\big]}{c_2^{3/4} \sigma \left( 1 - \frac{2}{\sqrt{c_1 c_2}} \right) \varepsilon} \vspace{1ex}\\
&= \frac{3 L (c_1 + 3 c_2)c_1^{1/4} \big[f^0 - f^{\star}\big]\sigma}{c_2^{3/4}\left( 1 - \frac{2}{\sqrt{c_1 c_2}} \right) \varepsilon^3}  = \BigO{\frac{\sigma}{\varepsilon^3}}. 
\end{array}
\end{equation*}
Since we choose $b := \frac{c_1\sigma^2}{\varepsilon^2}$, the final complexity is $\BigO{\max\set{\frac{\sigma}{\varepsilon^3}, \frac{\sigma^2}{\varepsilon^2}}}$, where other constants independent of $\sigma$ and $\varepsilon$ are hidden.
\Eproof

\beforesec
\section{Appendix: The convergence analysis of the mini-batch variants}\label{apdx:sec:mini_batch}
\aftersec
In this supplementary document, we provide a full analysis of the mini-batch variants of Algorithm~\ref{alg:A1} and Algorithm~\ref{alg:A2}.

\beforesubsec
\subsection{Variance bound of mini-batch hybrid estimators}\label{apdx:le:upper_bound_new_batch}
\aftersubsec
For $\hat{v}_t$ defined by \eqref{eq:vhat_t}, we have the following property.

\begin{lemma}\label{le:key_pro_of_vhat_t}
The mini-batch gradient estimator $\hat{v}_t$ defined by \eqref{eq:vhat_t} satisfies
\begin{equation}\label{eq:key_pro_of_vhat_t}
{\!\!\!\!\!}\begin{array}{ll}
\Exps{(\Bc_t, \hat{\Bc}_t)}{\norms{\hat{v}_t - \nabla{f}(x_t)}^2} &=  \beta_{t-1}^2\norms{\hat{v}_{t-1}  - \nabla{f}(x_{t-1})}^2 - \rho\beta_{t-1}^2\norms{\nabla{f}(x_{t-1}) - \nabla{f}(x_t)}^2 \vspace{1ex}\\
& + {~} \rho\beta_{t-1}^2\Exps{\xi}{\norms{\nabla{f}(x_t;\xi) - \nabla{f}(x_{t-1};\xi)}^2} \vspace{1ex}\\
& + {~} (1-\beta_{t-1})^2\rho\sigma^2,
\end{array}{\!\!\!\!\!}
\end{equation}
where $\rho = \rho(\hat{b}) := \frac{n - \hat{b}}{(n-1)\hat{b}}$ if $n := \vert\Omega\vert$ is finite, and $\rho(\hat{b}) := \frac{1}{\hat{b}}$, otherwise.
\end{lemma}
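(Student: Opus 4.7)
The plan is to mirror the proof of Lemma~\ref{le:key_estimate10}, but carefully track how a mini-batch average inflates/reduces variance by the factor $\rho$. First, I would rewrite the estimator error by adding and subtracting $\nabla{f}(x_t)$ and $\nabla{f}(x_{t-1})$ to get the clean decomposition
\begin{equation*}
\hat{v}_t - \nabla{f}(x_t) = \beta_{t-1}\,\underbrace{(\hat{v}_{t-1} - \nabla{f}(x_{t-1}))}_{=:A} + \beta_{t-1}\,\underbrace{\bar{B}_t}_{\text{centered mini-batch term}} + (1-\beta_{t-1})\,\underbrace{(u_t - \nabla{f}(x_t))}_{=:C},
\end{equation*}
where $\bar{B}_t := \frac{1}{\hat{b}_t}\sum_{i\in\hat{\Bc}_t}\big(\nabla{f}(x_t;\xi_i) - \nabla{f}(x_{t-1};\xi_i)\big) - \big(\nabla{f}(x_t) - \nabla{f}(x_{t-1})\big)$ has $\Exp{\bar{B}_t \mid \Fc_{t-1}}=0$ and $\Exp{C\mid\Fc_{t-1}}=0$.

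Next, I would expand the squared norm and take the conditional expectation over the pair $(\hat{\Bc}_t,\tilde{\Bc}_t)$ given $\Fc_{t-1}$. Since $A$ is $\Fc_{t-1}$-measurable, the first term contributes $\beta_{t-1}^2\|\hat{v}_{t-1}-\nabla{f}(x_{t-1})\|^2$. All three cross terms $\iprods{A,\bar{B}_t}$, $\iprods{A,C}$, $\iprods{\bar{B}_t,C}$ vanish in expectation: the first two by unbiasedness, and the last by the stated independence of $\hat{\Bc}_t$ and $\tilde{\Bc}_t$ (so $\bar{B}_t\perp C$ conditionally on $\Fc_{t-1}$).

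The main computational step is the mini-batch variance identity for $\bar{B}_t$. Writing $Y_i := \nabla{f}(x_t;\xi_i) - \nabla{f}(x_{t-1};\xi_i)$, the standard variance formula for the sample mean of $\hat{b}_t$ draws, with the finite population correction in the finite-$n$ case and simply $1/\hat{b}_t$ in the infinite case, yields
\begin{equation*}
\Exp{\norms{\bar{B}_t}^2\mid\Fc_{t-1}} = \rho\big(\Exps{\xi}{\norms{Y}^2} - \norms{\Exps{\xi}{Y}}^2\big) = \rho\big(\Exps{\xi}{\norms{\nabla{f}(x_t;\xi)-\nabla{f}(x_{t-1};\xi)}^2} - \norms{\nabla{f}(x_t)-\nabla{f}(x_{t-1})}^2\big),
\end{equation*}
which is exactly what produces the two $\rho\beta_{t-1}^2$ terms in \eqref{eq:key_pro_of_vhat_t}. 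The analogous identity applied to $u_t = \tfrac{1}{\tilde{b}_t}\sum_{j\in\tilde{\Bc}_t}\nabla{f}(x_t;\zeta_j)$ with Assumption~\ref{as:A1}(c) gives $\Exp{\|C\|^2\mid\Fc_{t-1}}\le\rho\sigma^2$, accounting for the final term.

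The only real obstacle is justifying the variance factor $\rho$ cleanly in both the finite-sum (sampling without replacement) and expectation (i.i.d.) regimes; once that is in place, the rest is just assembling the three terms and the vanishing cross terms. I would handle the two regimes in parallel by invoking the standard identity $\mathrm{Var}(\tfrac{1}{\hat{b}}\sum_{i\in\hat{\Bc}}Y_i) = \rho(\hat{b})\,\mathrm{Var}(Y)$, citing the hypergeometric covariance calculation for the finite case and independence for the infinite case.
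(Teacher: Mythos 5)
Your proposal is correct and follows essentially the same route as the paper's proof: the paper writes $\hat{v}_t - \nabla f(x_t) = \beta_{t-1}\Delta_{t-1} + \beta_{t-1}z_t + (1-\beta_{t-1})\Delta u_t - \beta_{t-1}\bar z$ with the uncentered batch average $z_t$ and an explicit $-\beta_{t-1}\bar z$ correction, which is algebraically the same as your centered $\bar B_t = z_t - \bar z$ decomposition, and it kills the same cross terms by unbiasedness and independence before invoking the identical mini-batch variance identity (with finite-population correction in the finite-$n$ case) for both $z_t$ and $u_t$. The only cosmetic point is that, like the paper, you end with an inequality $\Exp{\norms{u_t-\nabla f(x_t)}^2}\leq\rho\sigma^2$ for the last term even though the lemma is displayed as an equality; this looseness is already present in the paper's own statement.
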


\begin{proof}
Let $\hat{v}_t$ be defined by \eqref{eq:vhat_t}.
Let $z_t := \frac{1}{b_t}\sum_{i\in\Bc_t}(\nabla{f}_{\xi_i}(x_t) - \nabla{f}_{\xi_i}(x_{t-1}))$, $\bar{z} := \nabla{f}(x_t) - \nabla{f}(x_{t-1})$, $\Delta_t := \hat{v}_t - \nabla{f}(x_t)$, and $\Delta{u}_t := u_t - \nabla{f}(x_t)$.
Clearly, we have 
\begin{equation*}
\Exp{z_t} = \bar{z} ~~~\text{and}~~~\Exp{\Delta{u}_t} = 0.
\end{equation*}
Moreover, we can rewrite $\hat{v}_t$ in \eqref{eq:vhat_t} as
\begin{equation*}
\Delta_t  = \beta_{t-1}\Delta_{t-1} + \beta_{t-1}z_t + (1-\beta_{t-1})\Delta{u}_t - \beta_{t-1}\bar{z}.
\end{equation*}
Therefore, using these two expressions, we can derive
\begin{equation}\label{eq:bound_v_hat_t}
\begin{array}{ll}
\Exp{\norms{\Delta_t}^2}  &= \beta_{t-1}^2\norms{\Delta_{t-1}}^2 +  \beta_{t-1}^2\Exp{\norms{z_t}^2} + (1-\beta_{t-1})^2\Exp{\norms{\Delta{u}_t}^2}  + \beta_{t-1}^2\norms{\bar{z}}^2\vspace{1ex}\\
& + {~} 2\beta_{t-1}^2\iprods{\Delta_{t-1},\Exp{z_t}} + 2\beta_{t-1}(1-\beta_{t-1})\iprods{\Delta_{t-1}, \Exp{\Delta{u}_t}} - 2\beta_{t-1}^2\iprods{\Delta_{t-1},\bar{z}} \vspace{1ex}\\
& + {~} 2\beta_{t-1}(1-\beta_{t-1})\Exp{\iprods{z_t, \Delta{u}_t}} - 2\beta_{t-1}^2\iprods{\Exp{z_t}, \bar{z}} - 2\beta_{t-1}(1-\beta_{t-1})\iprods{\Exp{\Delta{u}_t}, \bar{z}} \vspace{1ex}\\
&=  \beta_{t-1}^2\norms{\Delta_{t-1}}^2 +  \beta_{t-1}^2\Exp{\norms{z_t}^2} + (1-\beta_{t-1})^2\Exp{\norms{\Delta{u}_t}^2}  - \beta_{t-1}^2\norms{\bar{z}}^2.
\end{array}
\end{equation}
For the finite-sum case, after a few elementary calculations, we can show that
\begin{equation*}
\Exp{\norms{z_t}^2} = \frac{n(b_t-1)}{(n-1)b_t}\norms{\bar{z}}^2 + \frac{(n-b_t)}{(n-1)b_t}\Exps{\xi}{\norms{\nabla{f}_{\xi}(x_t) - \nabla{f}_{\xi}(x_{t-1})}^2}. 
\end{equation*}
For the expectation case, we have
\begin{equation*}
\Exp{\norms{z_t}^2} =  \big(1- \frac{1}{b_t}\big)\norms{\bar{z}}^2 + \frac{1}{b_t}\Exps{\xi}{\norms{\nabla{f}_{\xi}(x_t) - \nabla{f}_{\xi}(x_{t-1})}^2}. 
\end{equation*}
In addition, under Assumption~\ref{as:A1}$($c$)$, we have $\Exp{\norms{\Delta{u}_t}^2} \leq \rho\sigma^2$.

Substituting one of the two last expressions and the bound of $\Exp{\norms{\Delta{u}_t}^2}$ into \eqref{eq:bound_v_hat_t}, we get \eqref{eq:key_pro_of_vhat_t}.
\end{proof}

The following analysis is given under fixed mini-batch sizes when we choose $\hat{b}_t = \tilde{b}_t = \hat{b}$.
Similar to Lemma \ref{le:upper_bound_new}, we can bound the variance $\Exp{\|\hat{v}_t - \nabla f(x_t)\|^2}$ of the mini-batch hybrid estimator $\hat{v}_t$ from \eqref{eq:vhat_t} in the following lemma.

\begin{lemma}\label{le:upper_bound_new_batch}
Assume that $f(\cdot,\cdot)$ is $L$-smooth and $u_t$ is an SGD estimator, $\hat{v}$ is given in \eqref{eq:vhat_t}, $\Bc_t$ and $\hat{\Bc}_t$ are mini-batches of the size $\hat{b}$.
Then, we have the following upper bound on the variance $\Exp{\norms{\hat{v}_t - \nabla{f}(x_t)}^2}$:
\begin{equation}\label{eq:vt_variance_bound_new_batch}
\Exp{\|\hat{v}_t - \nabla f(x_t)\|^2} \le \omega_t \Exp{\|\hat{v}_0 - \nabla f(x_0)\|^2} + L^2\rho \sum_{i=0}^{t-1}\omega_{i,t} \Exp{\|x_{i+1} - x_{i}\|^2} + \rho S_t,
\end{equation}
where the expectation is taking over all the randomness $\Fc_t := \sigma(v_0, v_1, \cdots, v_t)$, 
$\omega_{t} := \prod_{i=1}^{t}\beta_{i-1}^2$, $\omega_{i, t} := \prod_{j=i+1}^{t}\beta_{j-1}^2$ for $i=0,\cdots, t$, and $S_{t} := \sum_{i=0}^{t-1}\big(\prod_{j=i+2}^{t}\beta_{j-1}^2\big)(1-\beta_i)^2\sigma$ for $t \geq 0$. $\rho = \frac{n - \tilde{b}}{\tilde{b}(n-1)}$ if $|\Omega|$ is finite and $\rho = \frac{1}{\tilde{b}}$ otherwise.
\end{lemma}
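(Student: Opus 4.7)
The plan is to treat this as a direct mini-batch analog of Lemma~\ref{le:upper_bound_new}: first extract a one-step recursion in the same form as the single-sample case, then unroll that recursion exactly as before. The only new ingredient is the factor $\rho$, which already appears naturally in the per-step identity \eqref{eq:key_pro_of_vhat_t} provided by Lemma~\ref{le:key_pro_of_vhat_t}.

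Starting from \eqref{eq:key_pro_of_vhat_t}, I would first drop the nonpositive term $-\rho\beta_{t-1}^2\norms{\nabla f(x_{t-1}) - \nabla f(x_t)}^2$ to upper bound the right-hand side. Next, applying the $L$-average smoothness of $f$ from Assumption~\ref{as:A1}(b) to the inner expectation yields $\Exps{\xi}{\norms{\nabla f(x_t;\xi) - \nabla f(x_{t-1};\xi)}^2} \le L^2 \norms{x_t - x_{t-1}}^2$, and Assumption~\ref{as:A1}(c) combined with the mini-batch reduction already built into $\rho$ gives $(1-\beta_{t-1})^2\rho\sigma^2$ for the SGD part. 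Taking the full expectation over $\Fc_t$ and letting $a_t^2 := \Exp{\norms{\hat v_t - \nabla f(x_t)}^2}$, I obtain the recursion
\begin{equation*}
a_t^2 \le \beta_{t-1}^2 a_{t-1}^2 + \rho L^2 \beta_{t-1}^2 \Exp{\norms{x_t - x_{t-1}}^2} + (1-\beta_{t-1})^2 \rho \sigma^2.
\end{equation*}
This is structurally identical to the recursion derived in the proof of Lemma~\ref{le:upper_bound_new} in Subsection~\ref{apdx:le:upper_bound_new}, except that both $L^2$ and $\sigma^2$ are scaled by $\rho$.

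From here, I would unroll the recursion by induction on $t$ exactly as in \eqref{eq:key_estimate1d}, using the convention $\prod_{i=t+1}^t\beta_i^2 = 1$. Telescoping the recursion yields
\begin{equation*}
a_t^2 \le \Big(\prod_{i=1}^{t}\beta_{i-1}^2\Big)a_0^2 + \rho L^2\sum_{i=0}^{t-1}\Big(\prod_{j=i+1}^{t}\beta_{j-1}^2\Big)\Exp{\norms{x_{i+1}-x_i}^2} + \rho\sum_{i=0}^{t-1}\Big(\prod_{j=i+2}^{t}\beta_{j-1}^2\Big)(1-\beta_i)^2\sigma^2,
\end{equation*}
which by the definitions of $\omega_t$, $\omega_{i,t}$, and $S_t$ is exactly the claimed bound \eqref{eq:vt_variance_bound_new_batch}.

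There is no real obstacle: once Lemma~\ref{le:key_pro_of_vhat_t} is in hand, the proof is a line-by-line repetition of the unrolling argument from Appendix~\ref{apdx:le:upper_bound_new}, with a uniform $\rho$ attached to the smoothness and variance contributions. The only point that requires mild care is checking that the $\rho$-factor propagates linearly through the induction (it does, since it multiplies the two terms that appear freshly at each step but not the carry-over term $\beta_{t-1}^2 a_{t-1}^2$), so that the final expression has $\rho$ on the $L^2$-sum and on $S_t$ but not on the initial-variance term $\omega_t\Exp{\norms{\hat v_0 - \nabla f(x_0)}^2}$, which matches \eqref{eq:vt_variance_bound_new_batch}.
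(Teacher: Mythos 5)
Your proposal is correct and follows essentially the same route as the paper: take expectations in the identity of Lemma~\ref{le:key_pro_of_vhat_t}, drop the nonpositive term, apply the $L$-average smoothness and the $\rho$-scaled variance bound to get the one-step recursion, and unroll it exactly as in Lemma~\ref{le:upper_bound_new}. Your closing observation that $\rho$ attaches only to the freshly injected terms and not to the carried-over initial variance is precisely the structure the paper's induction produces.
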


\begin{proof}
From Lemma~\ref{le:key_pro_of_vhat_t}, taking the expectation with respect to $\mathcal{F}_t := \sigma(v_0,v_1,\cdots, v_t)$, we have
\begin{equation*}
\begin{array}{ll}
\Exp{\norms{\hat{v}_t - \nabla{f}(x_t)}^2} &\leq  \beta_{t-1}^2\Exp{\norms{\hat{v}_{t-1}  - \nabla{f}(x_{t-1})}^2} \vspace{1ex}\\
& + {~} L^2\rho\beta_{t-1}^2 \Exp{\|x_t - x_{t-1}\|^2} + \rho(1-\beta_{t-1})^2\sigma^2.
\end{array}
\end{equation*}
Let $a_t^2 := \Exp{\norms{\hat{v}_t - \nabla{f}(x_t)}^2} $ and $r_t^2 = \Exp{\|x_{t +1}- x_{t}\|^2}$.
By following inductive step as in the proof of Lemma \ref{le:upper_bound_new}, we obtain
\begin{align*}
a_t^2 &\le \left(\beta_{t-1}^2\cdots \beta_0^2 \right)a_0^2 + L^2\rho\left(\beta_{t-1}^2\cdots \beta_0^2\right)r_0^2 + \cdots + L^2\rho\beta_{t-1}^2r_{t-1}^2\\
&+ {~} \rho\left[\left(\beta_{t-1}^2\cdots\beta_1^2\right)(1-\beta_0)^2  + \cdots + (1-\beta_{t-1})^2 \right]\sigma^2.
\end{align*}
Using the definition of $\omega_{t}$, $\omega_{i, t}$, and $S_{t}$ in Lemma \ref{le:upper_bound_new}, the previous inequality becomes
\begin{align*}
a_t^2 &\le \omega_{t}a_0^2 + L^2\rho\sum_{i=0}^{t-1}\omega_{i,t}r_i^2 + \rho S_t,
\end{align*}
which is the same as \eqref{eq:vt_variance_bound_new_batch}.
\end{proof}

\beforesubsec
\subsection{The proof of Corollary~\ref{co:mini_batch}: Single loop with constant step-size and mini-batches}\label{apdx:co:mini_batch}
\aftersubsec
Using Lemma~\ref{le:upper_bound_new_batch} and following the same path of proof of Lemma~\ref{le:key_estimate_of_convergence}, we can show that
\begin{equation}\label{eq:co4_est1}
\begin{array}{ll}
\Exp{f(x_{m+1}} &\leq  \Exp {f(x_0)} - \displaystyle\sum_{t=0}^m\frac{\eta}{2}\Exp{\norms{\nabla{f}(x_t)}^2}  + \frac{\eta}{2} \left(\displaystyle\sum_{t=0}^m\omega_t\right)\Exp{\norms{\hat{v}_0 - \nabla{f}(x_0)}^2} \vspace{1ex}\\
& + {~} \frac{\rho\eta}{2}\displaystyle\sum_{t=0}^mS_t + \frac{1}{2}\hat{\Tc}_m, 
\end{array}
\end{equation}
where
\begin{equation*} 
\widehat{\mathcal{T}}_m := \rho L^2\eta^3\sum_{t=0}^m \sum_{i=0}^{t-1}\omega_{i,t}\Exp{\norms{\hat{v}_i}^2} -  \eta\sum_{t=0}^m\big(1 - L\eta\big)\Exp{\norms{\hat{v}_t}^2}.
\end{equation*}
Clearly, we can rewrite $\widehat{\mathcal{T}}_m$ as
\begin{equation*} 
\begin{array}{ll}
\widehat{\mathcal{T}}_m &= \eta\Big[\displaystyle\frac{\beta^2(1-\beta^{2m})L^2\eta^2\rho}{1-\beta^2} - (1-L\eta)\Big]q_0^2 \vspace{1ex}\\
& +  \eta\Big[\frac{\beta^2(1-\beta^{2(m-1)})L^2\eta^2\rho}{1-\beta^2} - (1-L\eta)\Big]q_1^2 + \cdots \vspace{1ex}\\
&+ {~}  \eta\Big[\displaystyle\frac{\beta^2(1-\beta^{2})L^2\eta^2\rho}{1-\beta^2} - (1-L\eta)\Big]q_{m-1}^2 - \eta(1-L\eta)q_{m}^2,
\end{array}
\end{equation*}
where $q_t^2 := \Exp{\norms{\hat{v}_t}^2}$.
To guarantee $\widehat{\mathcal{T}}_m \le 0$, we need to have
\begin{equation*}
\left\{\begin{array}{ll}
\displaystyle\frac{L^2\eta^2\rho\beta^2(1-\beta^{2m})}{1-\beta^2} - (1-L\eta) &\leq 0\vspace{1ex}\\
\displaystyle\frac{L^2\eta^2\rho\beta^2(1-\beta^{2(m-1)})}{1-\beta^2} - (1-L\eta) &\leq 0\vspace{1ex}\\
\cdots & \cdots \vspace{1ex}\\
\displaystyle\frac{L^2\eta^2\rho\beta^2(1-\beta^2)}{1-\beta^2} - (1-L\eta) &\leq 0\vspace{1ex}\\
- (1 - L\eta) &\leq 0.
\end{array}\right.
\end{equation*}
Let $\alpha^2_m := \frac{\beta^2(1-\beta^{2m})}{1 - \beta^2}$. 
Since $\alpha^2_1 < \alpha^2_2 < \dots < \alpha^2_m$,  the last condition holds if $L^2\eta^2\rho\alpha_m^2 - (1 - L\eta) \le 0$. 
By tightening this condition, we obtain
\begin{equation*}
\eta := \frac{2}{L\left(1 + \sqrt{1 + 4\rho\alpha_m^2}\right)}~\text{with}~\alpha^2_m := \frac{\beta^2(1-\beta^{2m})}{1 - \beta^2},
\end{equation*}
which is exactly \eqref{eq:step_size_batch}.
Since $\alpha_m^2 \le \frac{\beta^2}{1 - \beta^2}$, we have $\eta \geq \underline{\eta}: = \frac{2\sqrt{1-\beta^2}}{L(\sqrt{1-\beta^2} + \sqrt{1 + \beta^2(4\rho - 1)})}$.

Next, we can reuse the following estimates as in the proof of Theorem~\ref{th:singe_loop_const_step}:
\begin{equation*} 
\begin{array}{rl}
\displaystyle\sum_{t=0}^mS_t &\le \displaystyle\frac{\sigma^2(1-\beta)(m+1)}{1 +\beta}\\
\displaystyle\sum_{t=0}^m \omega_t &= \displaystyle\frac{1 - \beta^{2(m+1)}}{1 - \beta^2} \le \frac{1}{1 - \beta^2}.
\end{array}
\end{equation*}
Combining these estimate into \eqref{eq:co4_est1} and notting that $\widehat{\Tc}_m \leq 0$ and $\Exp{\norms{v_0 - \nabla{f}(x_0)}^2} \leq \frac{\sigma^2}{b}$, we can show that
\begin{equation}\label{eq:est5c_1_batch}
\begin{array}{ll}
\Exp{f(x_{m+1})} &\overset{\tiny\eqref{eq:bound_of_st_omegat}}{\leq} \Exp{f(x_0)} - \frac{\eta}{2}\displaystyle\sum_{t=0}^m\Exp{\norms{\nabla{f}(x_t)}^2} \vspace{1ex}\\
& + {~} \frac{\eta\sigma^2}{2(1+\beta)}\Big[\frac{1}{(1-\beta)b} + \rho(1-\beta)(m+1)\Big].
\end{array}
\end{equation}
Note that $\Exp{f(x_{m+1})} \ge f^{\star} > -\infty$, \eqref{eq:est5c_1_batch} can be rewritten as
\begin{equation}\label{eq:main_estimate_for_single_loop_batch1}
 \frac{\eta}{2}\displaystyle\sum_{t=0}^m\Exp{\norms{\nabla{f}(x_t)}^2}  \leq \Exp{f(x_0)} - f^{\star} + \frac{\eta\sigma^2}{2(1+\beta)}\Big[\frac{1}{(1-\beta)b} + \rho(1-\beta)(m+1)\Big].
\end{equation}
If we choose $\beta := 1 - \frac{c_1}{\sqrt{\hat{\rho}b(m+1)}}$ for any $0 < c_1 < \sqrt{b(m+1)}$ such that
\begin{equation*}
\frac{1}{(1-\beta)b(m+1)} + \rho(1-\beta) = \left(c_1 + \frac{1}{c_1}\right)\sqrt{\frac{\rho}{b(m+1)}},
\end{equation*} 
then \eqref{eq:main_estimate_for_single_loop_batch1} leads to
\begin{equation}\label{eq:single_loop_batch} 
\begin{array}{ll}
 \frac{1}{m+1}\displaystyle\sum_{t=0}^m\Exp{\norms{\nabla{f}(x_t)}^2}  &\leq  \frac{L}{(m+1)}\left(\frac{\sqrt{1-\beta^2} + \sqrt{1+\beta^2(4\rho - 1)}}{\sqrt{1-\beta^2}}\right)\Big[\Exp{f(x_0)} - f^{\star}\Big]  \vspace{1ex}\\
 & + {~}  \left(c_1 + \frac{1}{c_1}\right)\frac{\sigma^2}{(1+\beta)}\sqrt{\frac{\rho}{b(m+1)}}.
 \end{array}
\end{equation}
Since $\beta = 1 - \frac{c_1}{\sqrt{\rho b(m+1)}} < 1$ and if we choose $b$, $\hat{b}$, and $m$ such that $\rho b(m+1) > c_1^2$, we have 
\begin{equation*}
\begin{array}{ll}
1 - \beta^2 &= 1 - \left(1 - \frac{c_1}{\sqrt{\rho b(m+1)}} \right)^2 = \frac{2c_1}{\sqrt{\rho b(m+1)}} - \frac{c_1^2}{\rho b(m+1)} = \frac{2c_1\sqrt{\rho b(m+1)} - c_1^2}{\rho b(m+1)} > \frac{2c_1}{\sqrt{\rho b(m+1)}},
\end{array}
\end{equation*}
and $\sqrt{1 - \beta^2} + \sqrt{1+\beta^2(4\rho - 1)} \leq 1 + \sqrt{1+3\beta^2} \leq 3$ since $\rho \leq 1$. 
Therefore, we can bound $\eta$ as
\begin{equation*} 
\eta \geq \underline{\eta} \geq  \frac{2c_1}{3L\big[ \rho b(m+1)\big]^{1/4}}.
\end{equation*} 
Therefore, the inequality \eqref{eq:single_loop_batch} can be rewritten as
\begin{equation*} 
\begin{array}{ll}
\frac{1}{m+1}\displaystyle\sum_{t=0}^m\Exp{\norms{\nabla{f}(x_t)}^2} &\leq \frac{3 L(\rho b)^{1/4}}{2c_1(m+1)^{3/4}}\left(\Exp{f(x_0)} - f^{\star}\right) +  \left(c_1 + \frac{1}{c_1}\right)\frac{\sigma^2}{(1+\beta)}\sqrt{\frac{\rho}{b(m+1)}}.\\
&\leq \frac{3 L(\rho b)^{1/4}}{2c_1(m+1)^{3/4}}\left(\Exp{f(x_0)} - f^{\star}\right) +  \left(c_1 + \frac{1}{c_1}\right)\frac{\sigma^2}{2}\sqrt{\frac{\rho}{b(m+1)}}.
\end{array}
\end{equation*}
Let $f^0 := \Exp{f(x_0)}$.
We can write the  bound as
\begin{equation*} 
\frac{1}{m+1}\displaystyle\sum_{t=0}^m\Exp{\norms{\nabla{f}(x_t)}^2} \leq \frac{3 L(\rho b)^{1/4}}{2c_1(m+1)^{3/4}}\left( f^0 - f^{\star}\right) +  \left(c_1 + \frac{1}{c_1}\right)\frac{\sigma^2}{2}\sqrt{\frac{\rho}{b(m+1)}}.
\end{equation*}
Let us choose $b := c_2\sigma^{8/3}(\rho(m+1))^{1/3}$ for some $c_2 > 0$.
Then, the last inequality leads to 
\begin{equation}\label{eq:single_loop_final_batch} 
\frac{1}{m+1}\displaystyle\sum_{t=0}^m\Exp{\norms{\nabla{f}(x_t)}^2} \leq \frac{\rho^{1/3}\sigma^{2/3}}{(m+1)^{2/3}}\left[\frac{3 Lc_2^{1/4}}{2c_1}\left( f^0 - f^{\star}\right) +  \left(c_1 + \frac{1}{c_1}\right)\frac{1}{2\sqrt{c_2}}\right].
\end{equation}
From \eqref{eq:single_loop_final_batch}, to guarantee $\Exp{\norms{\nabla{f}(\widetilde{x}_m)}^2} \leq \varepsilon^2$, we need to choose 
\begin{equation*}
\frac{\rho^{1/3}\sigma^{2/3}}{(m+1)^{2/3}}\left[\frac{3 Lc_2^{1/4}}{2c_1}\left( f^0 - f^{\star}\right) +  \left(c_1 + \frac{1}{c_1}\right)\frac{1}{2\sqrt{c_2}}\right] \leq \varepsilon^2,
\end{equation*}
which leads to
\begin{equation*}
m + 1 \geq \frac{\rho^{1/2}\sigma}{\varepsilon^3}\left[\frac{3 Lc_2^{1/4}}{2{c_1}}\left( f^0 - f^{\star}\right) +  \left(c_1 + \frac{1}{c_1}\right)\frac{1}{2\sqrt{c_2}}\right]^{3/2}.
\end{equation*}
Hence, we can choose $m$ as in \eqref{eq:choice_of_m2}.

Finally, let $c_1 = 1$.
Then the number of stochastic gradient evaluations $\Tc_{ge}$ is
\begin{equation*}
\begin{array}{ll}
\Tc_{ge} &= b + 3\hat{b}m \leq b + \frac{3(m+1)}{\rho} \vspace{1ex}\\
&\leq c_2\sigma^{8/3} \left[\rho(m+1)\right]^{1/3} + \frac{3\sigma}{\rho^{1/2}\varepsilon^3}\left[\frac{3 Lc_2^{1/4}}{2{c_1}}\left( f^0 - f^{\star}\right) + \frac{1}{\sqrt{c_2}}\right]^{3/2}\vspace{1ex}\\
&\leq \frac{\rho^{1/2}\sigma^{3}}{\varepsilon}\left[\frac{3 Lc_2^{9/4}}{2{c_1}}\left( f^0 - f^{\star}\right) + c_2^{3/2}\right]^{1/2} + \frac{3\sigma}{\rho^{1/2}\varepsilon^3}\left[\frac{3 Lc_2^{1/4}}{2{c_1}}\left( f^0 - f^{\star}\right) + \frac{1}{\sqrt{c_2}}\right]^{3/2},
\end{array}
\end{equation*}
which proves \eqref{eq:T_ge3}, where $\rho \le \frac{1}{\hat{b}}$ if $\vert\Omega\vert$ is infinite and $\rho := \frac{n-\hat{b}}{\hat{b}(n-1)}$ if $\vert\Omega\vert$ is finite.
In particular, if $\vert\Omega\vert$ is infinite and we choose $\rho := \frac{c_3^2}{\sigma^2\varepsilon^2}$ for some $c_3 \leq \sigma\varepsilon$, then 
\begin{align*}
\Tc_{ge} &= \frac{c_3\sigma^{2}}{\varepsilon^2}\left[\frac{3 Lc_2^{9/4}}{2{c_1}}\left( f^0 - f^{\star}\right) + c_2^{3/2}\right]^{1/2} + \frac{3\sigma^2}{c_3\varepsilon^2}\left[\frac{3 Lc_2^{1/4}}{2{c_1}}\left( f^0 - f^{\star}\right) + \frac{1}{\sqrt{c_2}}\right]^{3/2}.
\end{align*}
Hence, we obtain $\Tc_{ge} = \BigO{\big(c_3 + \frac{1}{c_3}\big)\frac{\sigma^2}{\varepsilon^2}}$.
\Eproof

\beforesubsec
\subsection{The mini-batch variant of Algorithm~\ref{alg:A2} and its complexity}\label{apdx:th:double_loop_convergence_mini_batch}
\aftersubsec
Let us consider a mini-batch variant of Algorithm~\ref{alg:A2}.
Similar to Theorem~\ref{th:double_loop_convergence}, we can prove the following result.

\begin{corollary}\label{co:double_loop_convergence_minibatch}
Let $\sets{x^{(s)}_t}_{t=0\to m}^{s=1\to S}$ be the sequence generated by the mini-batch variant of Algorithm~\ref{alg:A2} using constant step-size $\eta$ defined in \eqref{eq:step_size_batch} with $c_1 := 1$.
Then, the following estimate holds
\begin{equation}\label{eq:double_loop_est2} 
\frac{1}{S(m+1)}\displaystyle\sum_{s=1}^S\sum_{t=0}^m\Exp{\norms{\nabla{f}(x_t^{(s)})}^2} \leq  \frac{3 L \rho(\hat{b}) b^{1/4}}{S (m+1)^{3/4}} \big[f(\tilde{x}^0) - f^{\star}\big] + \frac{2 \sigma^2\sqrt{\rho(\hat{b})}}{\sqrt{b (m+1)}}.
\end{equation}
Let $\widetilde{x}_T \sim \Ub(\sets{x^{(s)}_t}_{t=0\to m}^{s=1\to S})$.
If we choose $b := \frac{c_1\sigma^2}{\varepsilon^2}$ and $\frac{m + 1}{\hat{b}} :=  \frac{c_2\sigma^2}{\hat{b}^2\varepsilon^2}$ for some constants $c_1 > 0$ and $c_2 > 0$ and $c_1c_2 > 4$, then, to guarantee $\Exp{\norms{\nabla{f}(\widetilde{x}_T)}^2} \leq \varepsilon^2$, we require
\begin{equation}\label{eq:S_iterations2}
S := \frac{3 L c_1^{1/4} \big[f(\widetilde{x}^0) - f^{\star}\big]}{c_2^{3/4} \hat{b}^{3/2}\sigma \left( 1 - \frac{2}{\sqrt{c_1 c_2}} \right) \varepsilon}.
\end{equation}
Consequently,  the total number of stochastic gradient evaluations $\Tc_{ge}$ does not exceed 
\begin{equation}\label{eq:Toc_double_loop2}
\Tc_{ge} := \left(b + 3\lfloor\tfrac{m}{\hat{b}}\rfloor\right)S  = \frac{3 L (c_1 + 3 c_2)c_1^{1/4} \big[f(\widetilde{x}^0) - f^{\star}\big]\sigma}{c_2^{3/4}\hat{b}^{3/2}\left( 1 - \frac{2}{\sqrt{c_1 c_2}} \right) \varepsilon^3}  = \BigO{\frac{\sigma}{\varepsilon^3}}. 
\end{equation}
\end{corollary}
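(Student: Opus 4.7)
The plan is to parallel the proof of Theorem~\ref{th:double_loop_convergence} verbatim, substituting the mini-batch one-stage descent inequality from the proof of Corollary~\ref{co:mini_batch} for the single-sample estimate \eqref{eq:single_loop2}. First, I would apply the stage-wise inequality \eqref{eq:est5c_1_batch} (already established during the mini-batch analysis, which accounts for the $\rho$-factor via Lemma~\ref{le:upper_bound_new_batch}) to the $s$-th outer loop with $x_0^{(s)} = \widetilde{x}^{(s-1)}$ and $x_{m+1}^{(s)} = \widetilde{x}^{(s)}$. Rearranging and using the constant step-size $\eta$ from \eqref{eq:step_size_batch} together with $\beta = 1 - 1/\sqrt{\rho b(m+1)}$, this gives, for each $s$,
\begin{equation*}
\sum_{t=0}^m \Exp{\norms{\nabla{f}(x_t^{(s)})}^2} \leq \frac{2}{\eta}\bigl[\Exp{f(\widetilde{x}^{(s-1)})} - \Exp{f(\widetilde{x}^{(s)})}\bigr] + \frac{\sigma^2}{1+\beta}\Bigl[\frac{1}{(1-\beta)b} + \rho(1-\beta)(m+1)\Bigr].
\end{equation*}

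Next, I would telescope this across $s = 1,\ldots,S$: the function-value differences collapse because consecutive stages share endpoints, and the lower bound $\Exp{f(\widetilde{x}^{(S)})} \geq f^{\star}$ eliminates the final term. Dividing by $S(m+1)$ and inserting the step-size lower bound $\eta \geq \tfrac{2}{3L(\rho b(m+1))^{1/4}}$ (the mini-batch analogue of \eqref{eq:LR_lower_bound} already derived inside the proof of Corollary~\ref{co:mini_batch} for the choice $c_1 = 1$) yields \eqref{eq:double_loop_est2}, up to absorbing an absolute constant and simplifying $1/((1-\beta)b) + \rho(1-\beta)(m+1) = 2\sqrt{\rho/(b(m+1))}\cdot\sqrt{b(m+1)}/1$ as in the single-loop case.

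The complexity half of the corollary is then a routine book-keeping exercise identical in structure to the one written out for Theorem~\ref{th:double_loop_convergence}. Plugging $b = c_1\sigma^2/\varepsilon^2$ and $(m+1)/\hat{b} = c_2\sigma^2/(\hat{b}^2\varepsilon^2)$ into \eqref{eq:double_loop_est2} and requiring each of the two resulting terms to be at most $\varepsilon^2/2$ forces the variance term to contribute the factor $1 - 2/\sqrt{c_1 c_2}$ (hence the hypothesis $c_1 c_2 > 4$), and solving for $S$ produces \eqref{eq:S_iterations2}. Multiplying by the per-stage cost $b + 3\lfloor m/\hat{b}\rfloor$ then gives \eqref{eq:Toc_double_loop2}.

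The main obstacle, as in Corollary~\ref{co:mini_batch}, is not conceptual but algebraic: one must verify that the choices of $b$, $m$, and $\hat{b}$ satisfy the implicit sanity condition $\rho b(m+1) > c_1^2 = 1$ required to make $\beta \in (0,1)$ and the step-size bound valid, and one must correctly propagate the $\rho(\hat{b})$-factor through both the $\eta$-lower bound (contributing $\rho^{1/4}$ to the first term of \eqref{eq:double_loop_est2}) and the variance term (contributing $\sqrt{\rho}$ to the second). Keeping these two appearances of $\rho$ separate is what produces the $\hat{b}^{-3/2}$ acceleration in \eqref{eq:S_iterations2} and \eqref{eq:Toc_double_loop2} relative to the single-sample Theorem~\ref{th:double_loop_convergence}.
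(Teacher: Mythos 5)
Your proposal follows the paper's own route essentially verbatim: apply the per-stage mini-batch descent bound (derived from \eqref{eq:est5c_1_batch} and the step-size lower bound $\eta \geq \tfrac{2}{3L(\rho b(m+1))^{1/4}}$ from the proof of Corollary~\ref{co:mini_batch}), telescope over $s=1,\dots,S$ using $\Exp{f(\widetilde{x}^{(S)})}\geq f^{\star}$, then substitute the parameter choices and solve for $S$ and $\Tc_{ge}$. The only cosmetic deviation is that you split the error budget as $\varepsilon^2/2$ per term whereas the paper lets the variance term consume exactly $2\varepsilon^2/\sqrt{c_1c_2}$ of the budget (which is what actually produces the stated factor $1-2/\sqrt{c_1c_2}$); your tracking of the two separate appearances of $\rho$ is correct and in fact matches the paper's proof more faithfully than the corollary's displayed statement does.
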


\begin{proof}
First, similar to the proof of \eqref{eq:single_loop_final_batch}, we have
\begin{equation*} 
\frac{1}{m+1}\displaystyle\sum_{t=0}^m\Exp{\norms{\nabla{f}(x_t^{(s)})}^2} \leq \frac{3 L(\rho b)^{1/4}}{(m+1)^{3/4}}\left( \Exp{f(x_0^{(s)})} - \Exp{f(x_{m+1}^{(s)})}\right) + 2\sigma^2\sqrt{\frac{\rho}{b(m+1)}}.
\end{equation*}
Summing up this inequality from $s = 1$ to $s = S$ and then using $\Exp{f(x_{m+1}^{(S)})} \geq f^{\star}$ and $\widetilde{x}_0 := x_0^{(1)}$, we can show that
\begin{equation*} 
\frac{1}{(m+1)S}\displaystyle\sum_{s=1}^S\sum_{t=0}^m\Exp{\norms{\nabla{f}(x_t^{(s)})}^2} \leq \frac{3 L(\rho b)^{1/4}}{S(m+1)^{3/4}}\left( \Exp{f(\widetilde{x}_0} - f^{\star}\right) + 2\sigma^2\sqrt{\frac{\rho}{b(m+1)}}.
\end{equation*}
If we choose $b := \frac{c_1\sigma^2}{\hat{b}^2\varepsilon^2}$ and $\frac{m + 1}{\hat{b}} :=  \frac{c_2\sigma^2}{\varepsilon^2}$ for some constants $c_1 > 0$ and $c_2 > 0$ and $c_1c_2 > 4$, then, $\rho(m+1) =  \frac{c_2\sigma^2}{\varepsilon^2}$, $b = \frac{c_1\rho^2\sigma^2}{\varepsilon^2}$, and 
from \eqref{eq:est5d_1}, to guarantee $\frac{1}{S(m+1)}\displaystyle\sum_{s=1}^S\sum_{t=0}^m\Exp{\norms{\nabla{f}(x_t^{(s)})}^2} \leq \varepsilon^2$, we require
\begin{align*}
	\frac{3 L (\rho b)^{1/4}}{S (m+1)^{3/4}} \big[f^0 - f^{\star}\big] + \frac{2 \sigma^2\sqrt{\rho}}{\sqrt{b (m+1)}} &= \frac{3 L c_1^{1/4}\rho^{3/4} \sigma^{1/2}}{\varepsilon^{1/2}} \cdot \frac{\rho^{3/4}\varepsilon^{3/2}}{S c_2^{3/4} \sigma^{3/2}} \big[f^0 - f^{\star}\big] + \frac{2\sigma^2 \varepsilon^2} {\sigma^2 \sqrt{c_1 c_2}} = \varepsilon^2 \\
	&  \Leftrightarrow \frac{3 L c_1^{1/4}\rho^{3/2} \varepsilon}{S c_2^{3/4} \sigma} \big[f^0 - f^{\star}\big] = \left( 1 - \frac{2}{\sqrt{c_1 c_2}} \right) \varepsilon^2 \\
	& \Leftrightarrow S = \frac{3 L \rho^{3/2} c_1^{3/4} \big[f^0 - f^{\star}\big]}{c_2^{3/4} \sigma \left( 1 - \frac{2}{\sqrt{c_1 c_2}} \right) \varepsilon}.
\end{align*}
Consequently,  the total complexity is
\begin{equation*}
\begin{array}{ll}
\Tc_{ge} &:= (b + 3\frac{m}{\hat{b}})S \leq (c_1 + 3 c_2) \frac{\sigma^2}{\varepsilon^2} \frac{3 L c_1^{1/4}\rho^{3/2} \big[f^0 - f^{\star}\big]}{c_2^{3/4} \sigma \left( 1 - \frac{2}{\sqrt{c_1 c_2}} \right) \varepsilon} \vspace{1ex}\\
&= \frac{3 L (c_1 + 3 c_2)c_1^{1/4} \big[f^0 - f^{\star}\big]\sigma}{c_2^{3/4}\hat{b}^{3/2}\left( 1 - \frac{2}{\sqrt{c_1 c_2}} \right) \varepsilon^3}  = \BigO{\frac{\sigma}{\varepsilon^3}}. 
\end{array}
\end{equation*}
Since we choose $b\hat{b}^2 := \frac{c_1\sigma^2}{\varepsilon^2}$ which shows that $b \leq \frac{c_1\sigma^2}{\varepsilon^2}$, the final complexity is $\BigO{\max\set{\frac{\sigma}{\varepsilon^3}, \frac{\sigma^2}{\varepsilon^2}}}$, where other constants independent of $\sigma$ and $\varepsilon$ are hidden.
\end{proof}

\beforesec
\section{Appendix: Additional numerical experiments}\label{apdx:subsec:experiments}
\aftersec
In this subsection, we provide more numerical examples on two examples we tested in the main text.

\beforesubsec
\subsection{Experiment setup}
\aftersubsec
\textbf{Our algorithms: } 
We implement the following variants of Algorithm~\ref{alg:A1} and Algorithm~\ref{alg:A2} in Python:
\begin{itemize}
\vspace{-1ex}
\item \textbf{Single-loop algorithms}: 
We consider different variants of the single-loop algorithm, Algorithm~\ref{alg:A1}.
We denote them by \texttt{Hybrid-SGD-SL} for constant step-size variants, and \texttt{Hybrid-SGD-ASL} for adaptive step-size variants.

\item \textbf{Double-loop algorithms}: 
These are variants of Algorithm~\ref{alg:A2}. 
We denote them by \texttt{Hybrid-SGD-DL[1-3]} the variants corresponding to different snapshot gradient batch-sizes of $b = n^{2/3}$, $b = 0.1n$, and $b = n$. 
We also denote \texttt{Hybrid-SGD-DL} as the best variants among these three choices of the batch-size for snapshot gradient.
\vspace{-1ex}
\end{itemize}
\textbf{Competitors:} 
We also compare our methods with the most state-of-the-art candidates from the literature.
We ignore other variants since their complexity bound is worse than ours and they use complicated routines for hyper-parameter selection.
\begin{itemize}
\vspace{-1ex}
\item Stochastic gradient descent (SGD): We test two variants of SGD. 
The first one, called SGD1, is with constant step-size $\eta_t := \frac{0.1}{L}$.
The second variant, called SGD2, is with an adaptive step-size of the form $\eta_t := \frac{\eta_0}{1 + \eta'\lfloor t/n\rfloor}$, where $\eta_0 > 0$ and $\eta' \geq 0$ are carefully tuned to obtain the best performance.
In our tests, we use $\eta_0 := \frac{0.1}{L}$ and $\eta' := 1$.
\item SVRG: This algorithmic variant is from \cite{nonconvexSVRG}, where its theoretical step-size in the single sample case is $\eta_t := \frac{1}{3nL}$, and in the mini-batch case is $\eta_t :=\frac{1}{3L} $.
\item SVRG+: This is a variant of SVRG studied in \cite{li2018simple}. Its theoretical step-size in the single sample case is $\eta_t := \frac{1}{6nL}$, and in the mini-batch case is $\eta_t := \frac{1}{6L}$.
\item SPIDER: SPIDER \cite{fang2018spider} is a stochastic gradient method using SARAH estimator (also called Stochastic Path-Integrated Differential EstimatoR). 
This method achieves the best-known complexity as Algorithm~\ref{alg:A2} but uses very different step-size $\eta_t := \min\set{\frac{\epsilon}{Ln_0\norm{v^k}},\frac{1}{2Ln_0}}$ where $n_0 = \frac{n^{1/2}}{\hat{b}}$ with $\hat{b}$ is a given mini-batch size in the range $[1,\sqrt{n}]$.
\item SpiderBoost: SpiderBoost \cite{wang2018spiderboost} is a modification of SPIDER by using a large constant step-size $\eta_t := \frac{1}{2L}$, but requires to set very specific mini-batch $\hat{b} = \lfloor\sqrt{n}\rfloor$ to achieve the best-known complexity as in Algorithm~\ref{alg:A2}.
\vspace{-1.5ex}
\end{itemize}
\textbf{Problems:}
We consider three examples: 
The first one is the logistic regression with non-convex regularizer as in \eqref{eq:exam1}.
The second example is a binary classification with non-convex loss as in \eqref{eq:exam2}.

\textbf{Datasets:}
All the datasets used in this paper are downloaded from LibSVM \cite{CC01a} at \\
\href{https://www.csie.ntu.edu.tw/~cjlin/libsvm/}{https://www.csie.ntu.edu.tw/~cjlin/libsvm/}.
We select 6 datasets:  \texttt{w8a} ($n=49,749, p=300$), \texttt{rcv1.binary} ($n=20,242, p = 47,236$),  \texttt{real-sim} $(n=72,309, p = 20,958$), \texttt{news20.binary} ($n=19,996, p = 1,355,191$), \texttt{url\_combined} ($n=2,396,130, p = 3,231,961$), and \texttt{epsilon} ($n=400,000, p = 2,000$).

\beforesubsec
\subsection{Logistic regression with non-convex regularizer}
\aftersubsec
In this section, we add more numerical examples to solve problem \eqref{eq:exam1}.
Together with the convergence of the trainning loss and gradient norms in Fig. \ref{fig:logistic_reg}, the training and test accuracies are also plotted in Fig. \ref{fig:logistic_acc1} for three datasets: \texttt{w8a}, \texttt{rcv1.binary}, and \texttt{real-sim}.

\begin{figure}[htp!]
\begin{center}
\includegraphics[width = 1\textwidth]{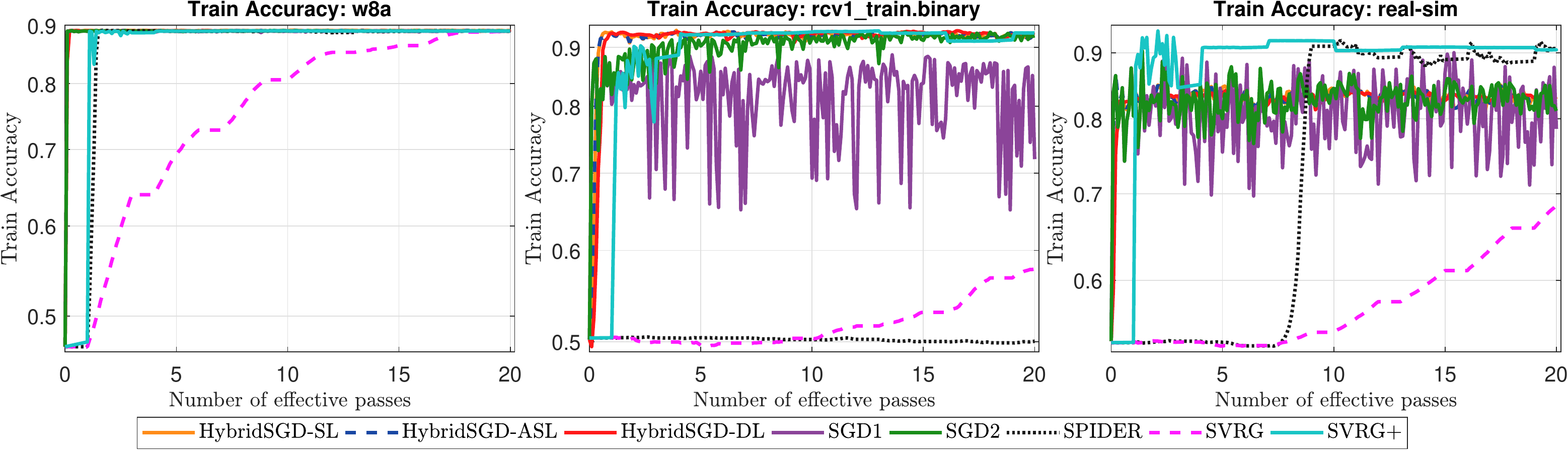}
\includegraphics[width = 1\textwidth]{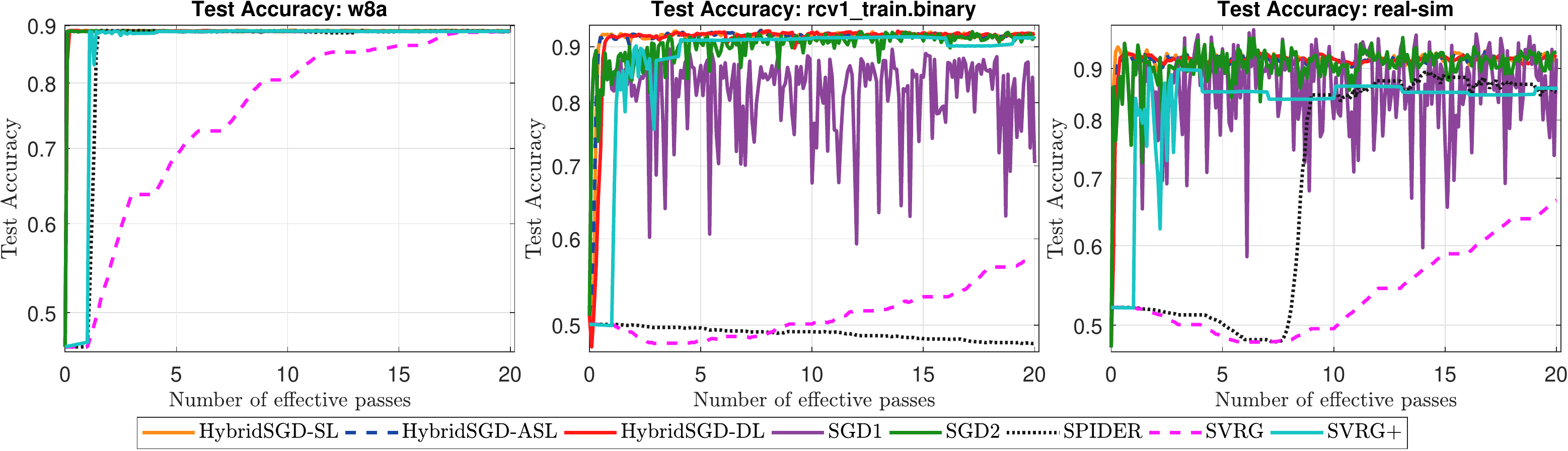}
\vspace{-1.5ex}
\caption{\done{The training and test accuracies of \eqref{eq:exam2} on three datasets: Single-sample case}.}\label{fig:logistic_acc1}
\end{center}
\vspace{-2ex}
\end{figure}

As we can observe from Fig.~\ref{fig:logistic_acc1}, for \texttt{w8a}, all the algorithms except for SVRG achieve similar training accuracy as well as test accuracy.
SVRG eventually reaches the same accuracy after around 17 epochs.
For \texttt{rcv1.binary}, HybridSGD variants, SGD2, and SVRG+ have similar training and test accuracies, but SGD2 is more oscillated than the other methods.
SGD1 performs worse than our methods in this case. Both SPIDER and SVRG still perform poorly.
For \texttt{real-sim}, although our methods, SGD1, and SGD2 achieve lower training accuracy, they are able to reach better test accuracy than SVRG+. 

In addition, the training and testing accuracies of the mini-batch case are presented in Fig. \ref{fig:logistic_acc2}, where  the relative residual of the train loss and the gradient norms are shown in Fig.~\ref{fig:logistic_reg2}. 
Again, our methods achieve training and test accuracies consistently with SGD2 in \texttt{w8a} and \texttt{real-sim}, while having better accuracy in \texttt{rcv1.binary}. 

\begin{figure}[htp!]
\begin{center}
\includegraphics[width = 1\textwidth]{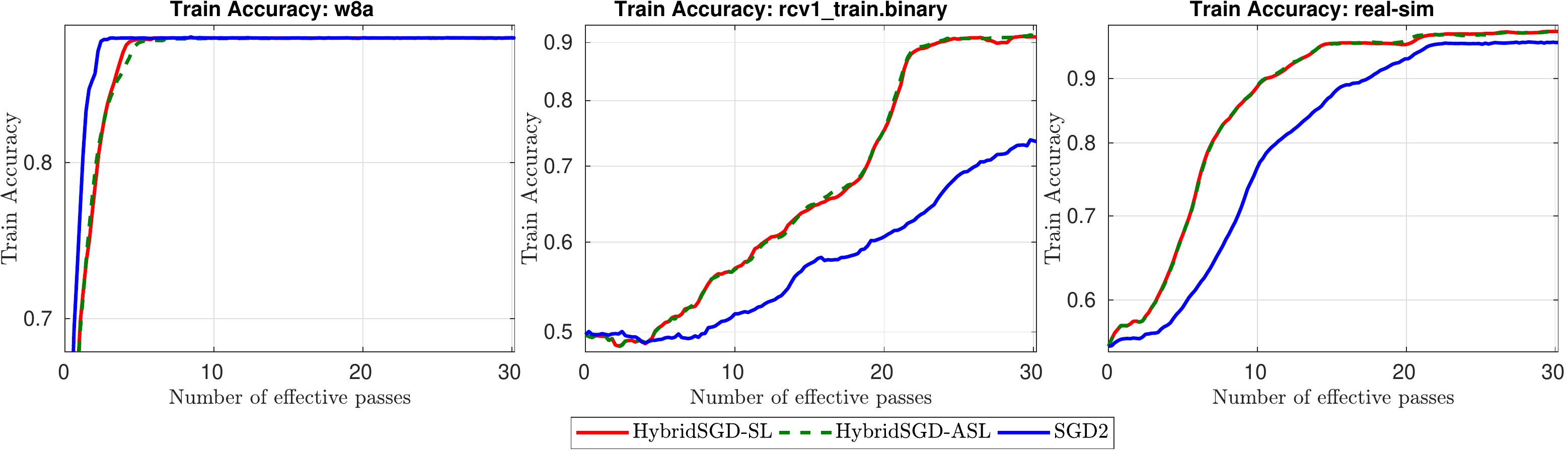}
\includegraphics[width = 1\textwidth]{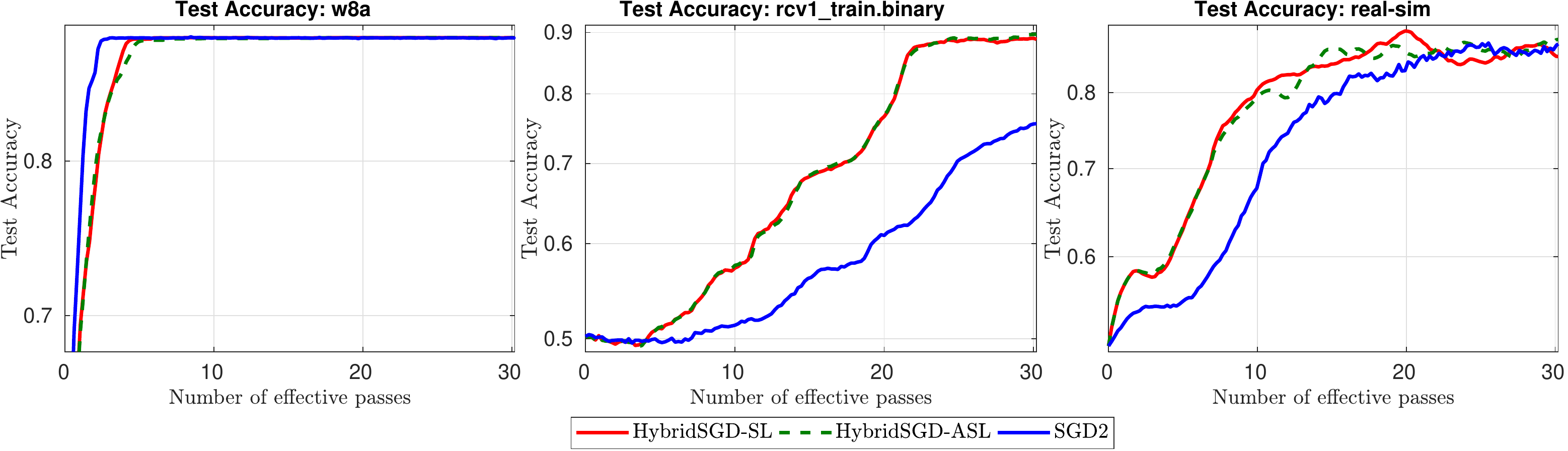}
\vspace{-1.5ex}
\caption{The training and test accuracies of \eqref{eq:exam2} on three datasets: Mini-batch case.}\label{fig:logistic_acc2}
\end{center}
\vspace{-2ex}
\end{figure}

We also run SVRG, SVRG+, SpiderBoost, and our double-loop variant (Algorithm~\ref{alg:A2}) on three datasets: \texttt{w8a}, \texttt{rcv1.binary}, and \texttt{real-sim}.
The results are plotted in Fig.~\ref{fig:logistic_reg4}.

\begin{figure}[htp!]
\begin{center}
\includegraphics[width = 1\textwidth]{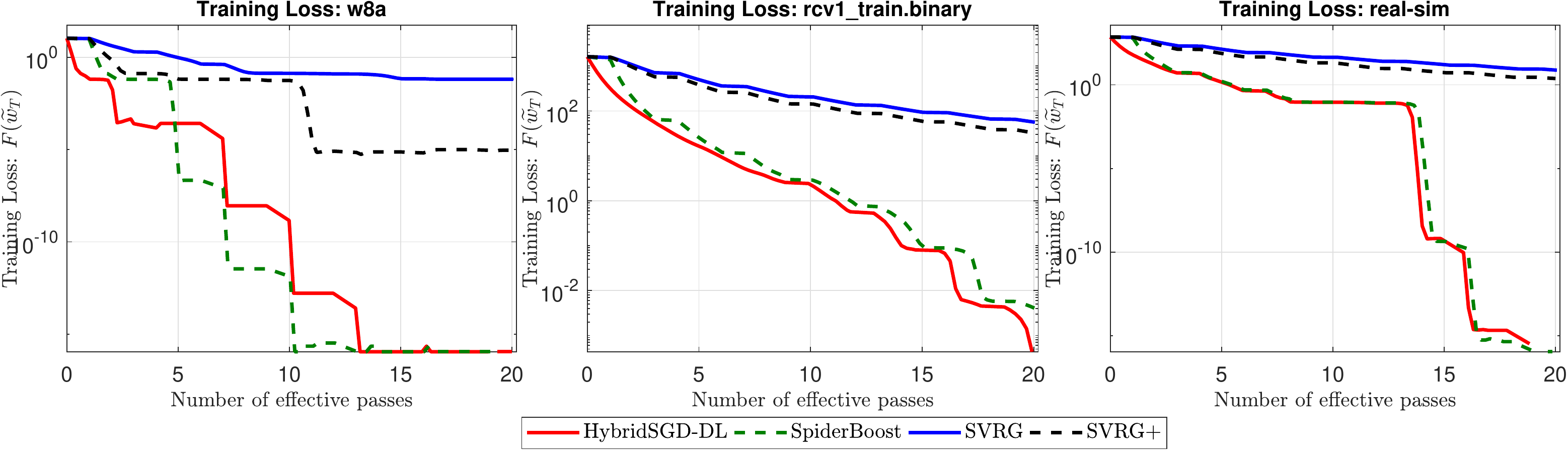}
\includegraphics[width = 1\textwidth]{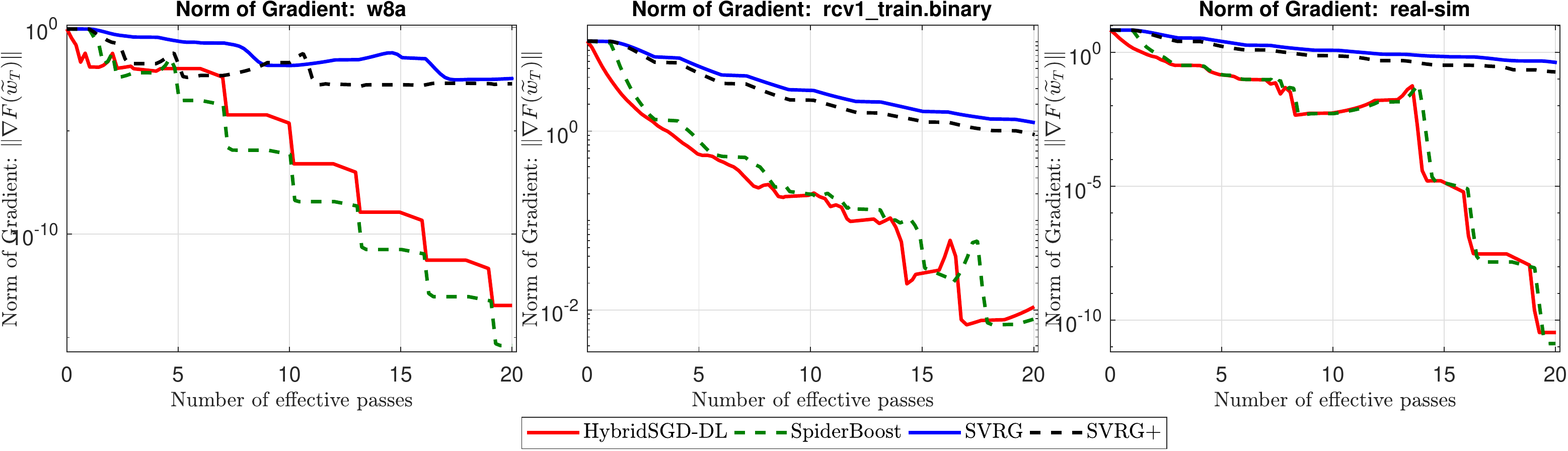}
\includegraphics[width = 1\textwidth]{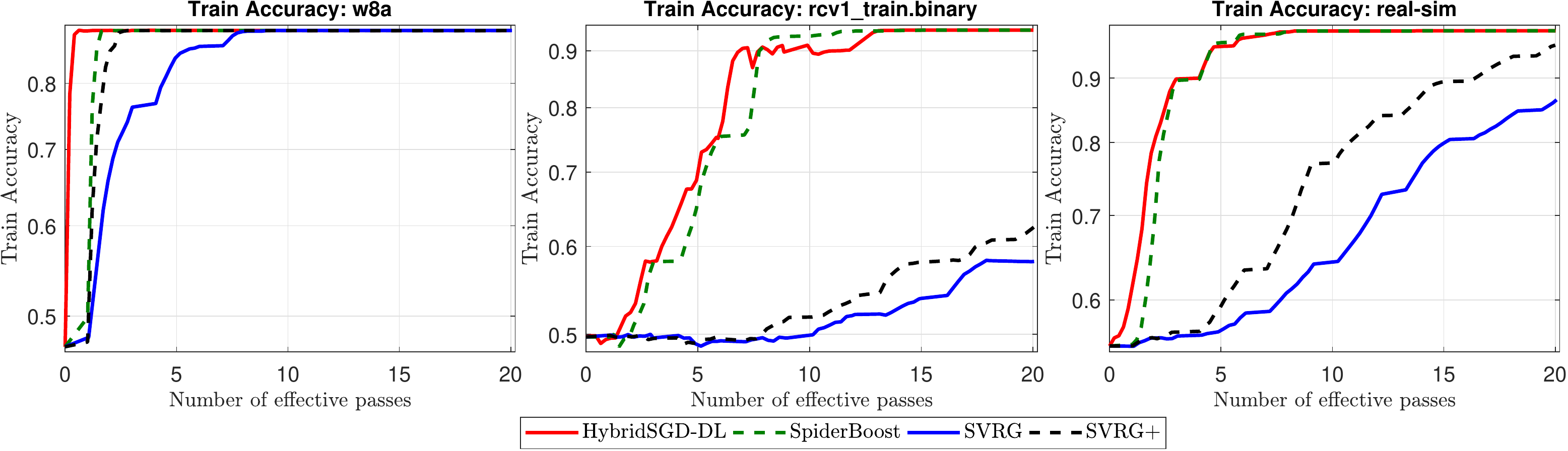}
\includegraphics[width = 1\textwidth]{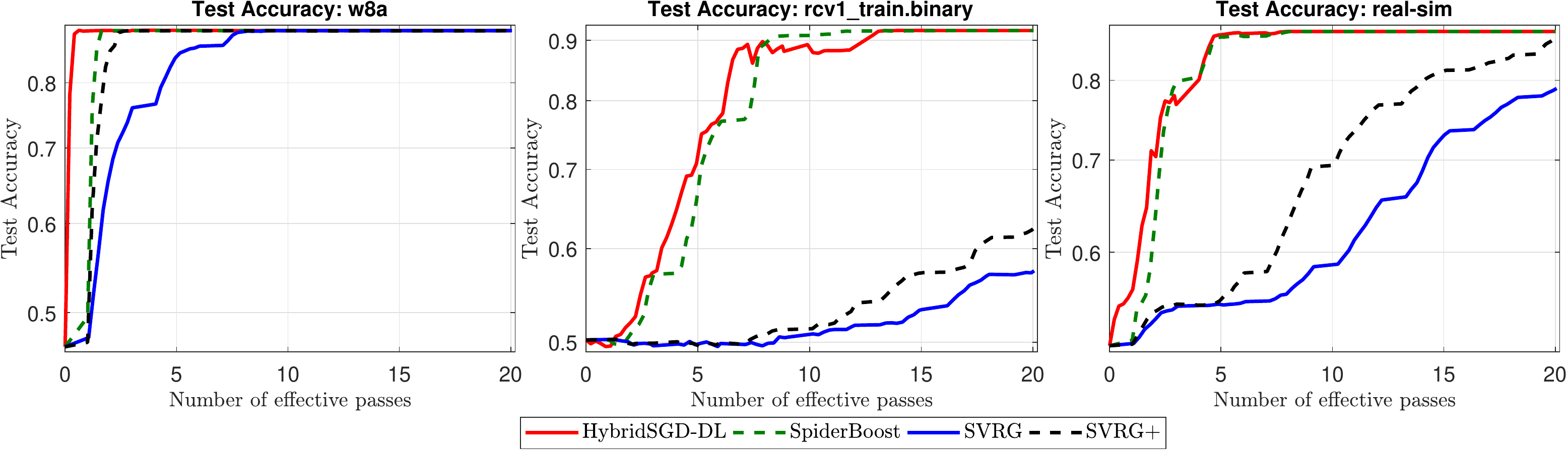}
\vspace{-1ex}
\caption{\done{The results of 4 algorithms for solving \eqref{eq:exam1}: Mini-batch case.}}\label{fig:logistic_reg4}
\end{center}
\vspace{-3ex}
\end{figure}

In this experiment, our double-loop variant and SpiderBoost outperform SVRG and SVRG+. 
Although the step-size of SVRG+ is $\eta = \frac{1}{6L}$ which is smaller than $\frac{1}{3L}$ in SVRG, SVRG+ still performs better than SVRG.
SpiderBoost uses a large step-size $\eta = \frac{1}{2L}$ and it indeed performs slightly better than ours in the \texttt{w8a} dataset, but is comparable in other two.
Note that our step-size $\eta$ is selected based on our theory in Theorem~\ref{th:double_loop_convergence}.

Finally, we conduct experiment on three larger datasets: \texttt{epsilon}, \texttt{url\_combined}, and \texttt{news20.binary}. Since the the sample sizes are large, we only run mini-batch variants.
The results of the single-loop variants are shown in Fig.~\ref{fig:larger_datasets1}.

\begin{figure}[htp!]
\begin{center}
\includegraphics[width = 1\textwidth]{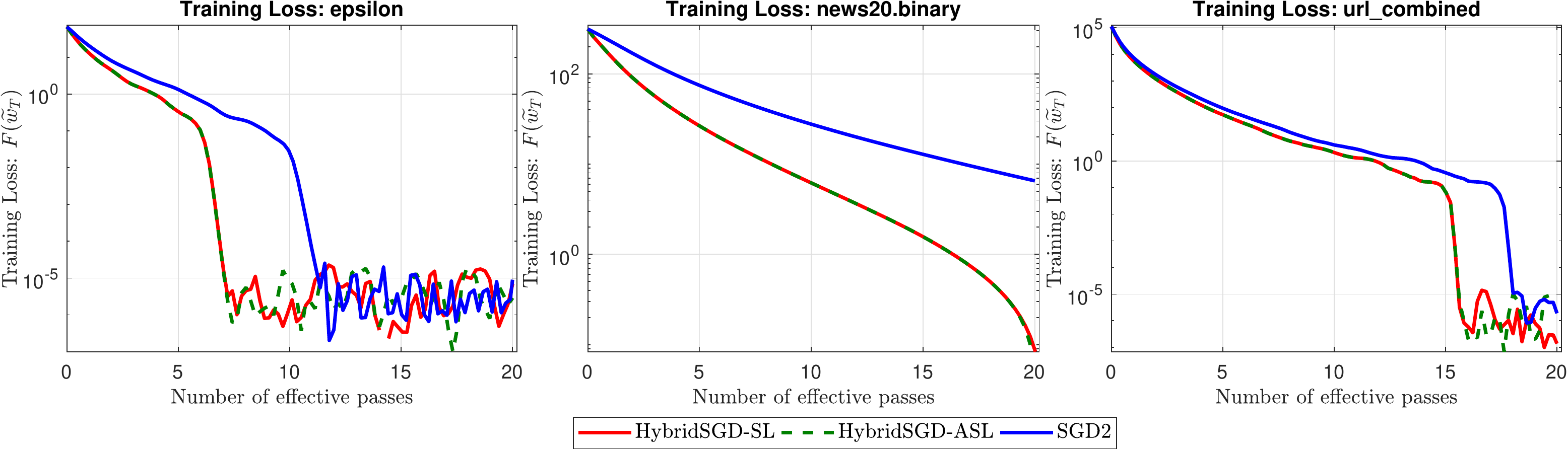}
\includegraphics[width = 1\textwidth]{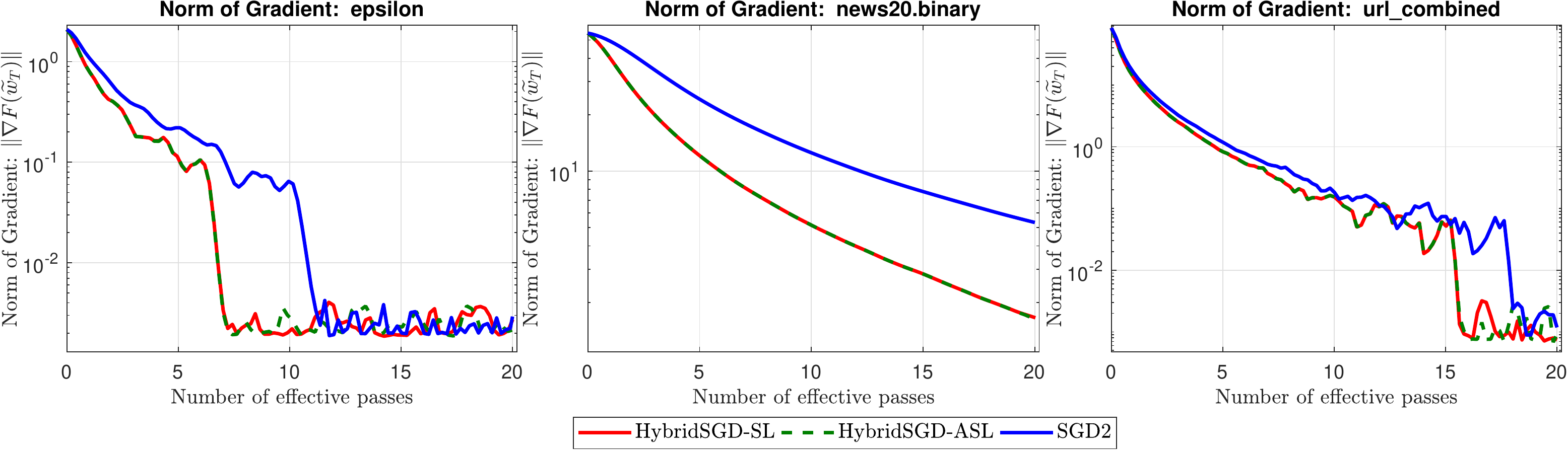}
\includegraphics[width = 1\textwidth]{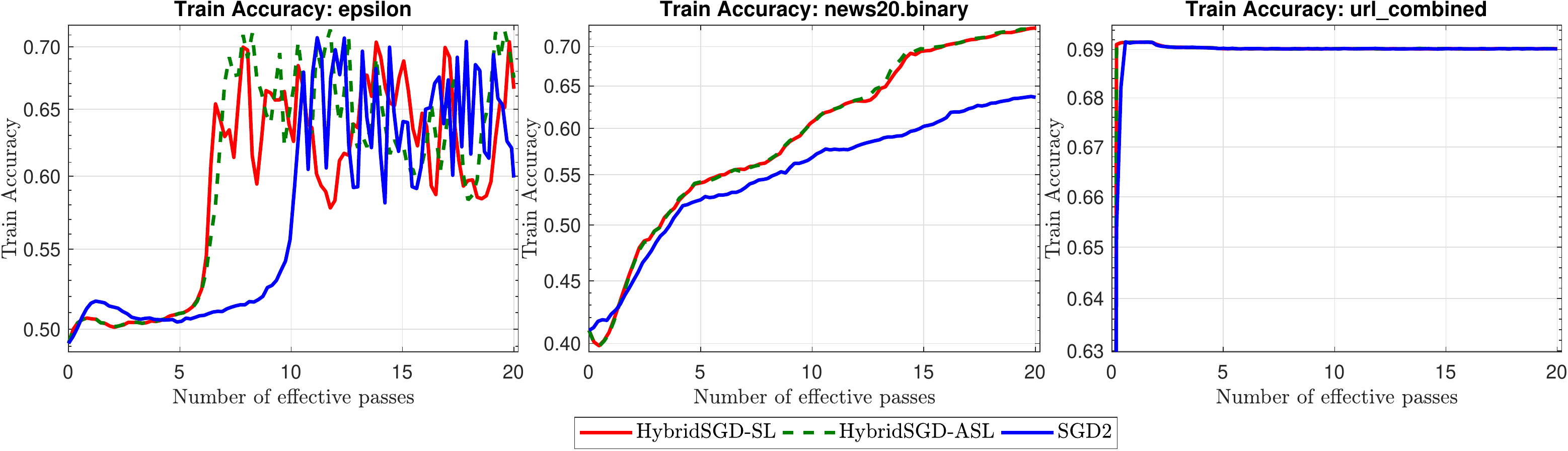}
\includegraphics[width = 1\textwidth]{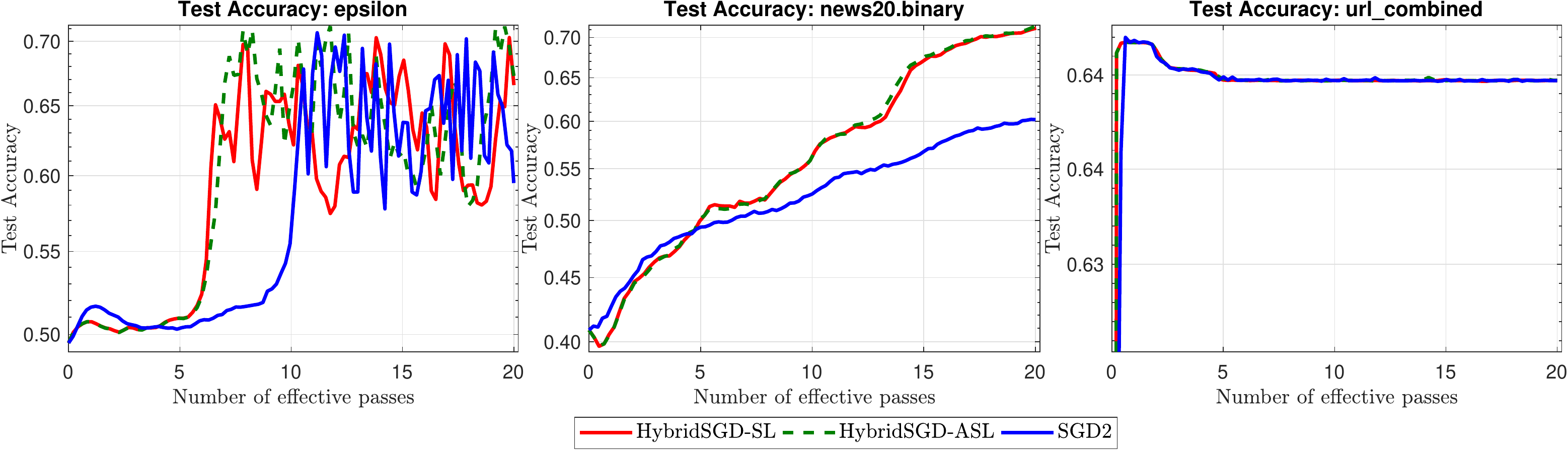}
\vspace{-1ex}
\caption{\done{The results of 3 single-loop algorithms for solving \eqref{eq:exam1} on large datasets: Mini-batch case.}}\label{fig:larger_datasets1}
\end{center}
\vspace{-3ex}
\end{figure}

We can observe from Fig.~\ref{fig:larger_datasets1} that our single loop variants outperform SGD in all three datasets. Note that the performance of the adaptive step-size variant is similar to its fixed step-size one.

The results of the double-loop variants are also shown in Fig.~\ref{fig:larger_datasets2}.

\begin{figure}[htp!]
\begin{center}
\includegraphics[width = 1\textwidth]{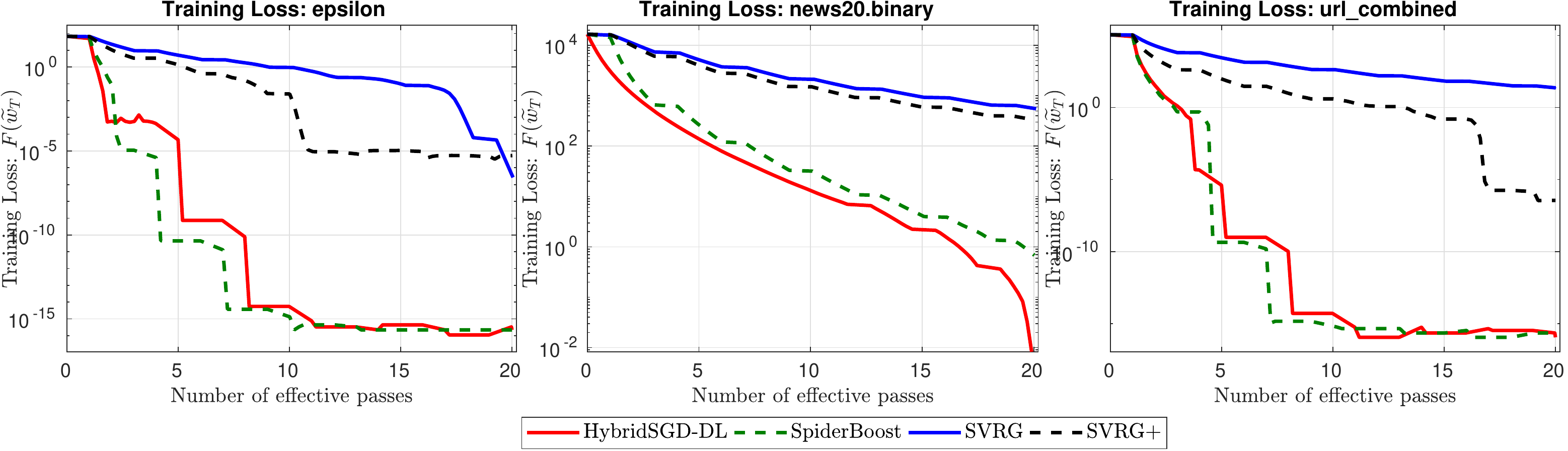}
\includegraphics[width = 1\textwidth]{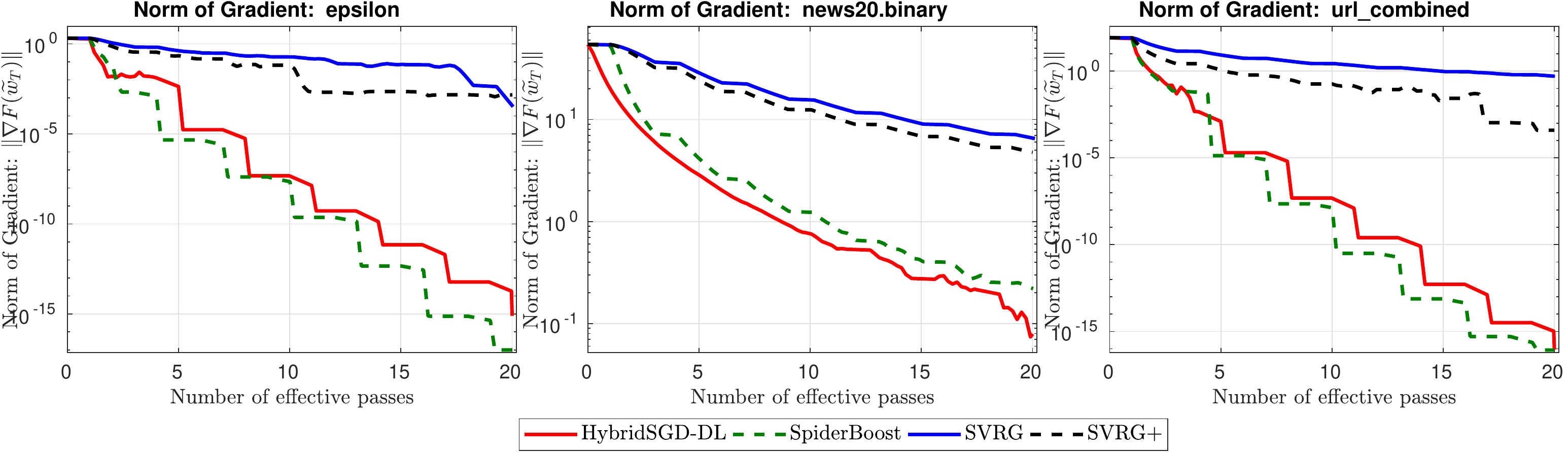}
\includegraphics[width = 1\textwidth]{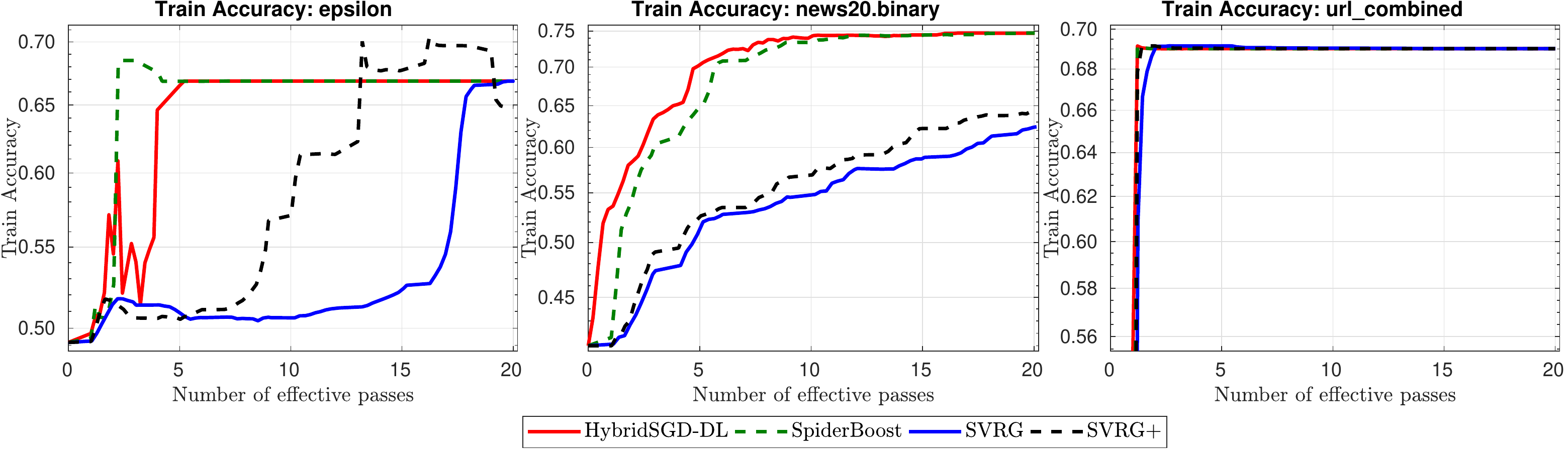}
\includegraphics[width = 1\textwidth]{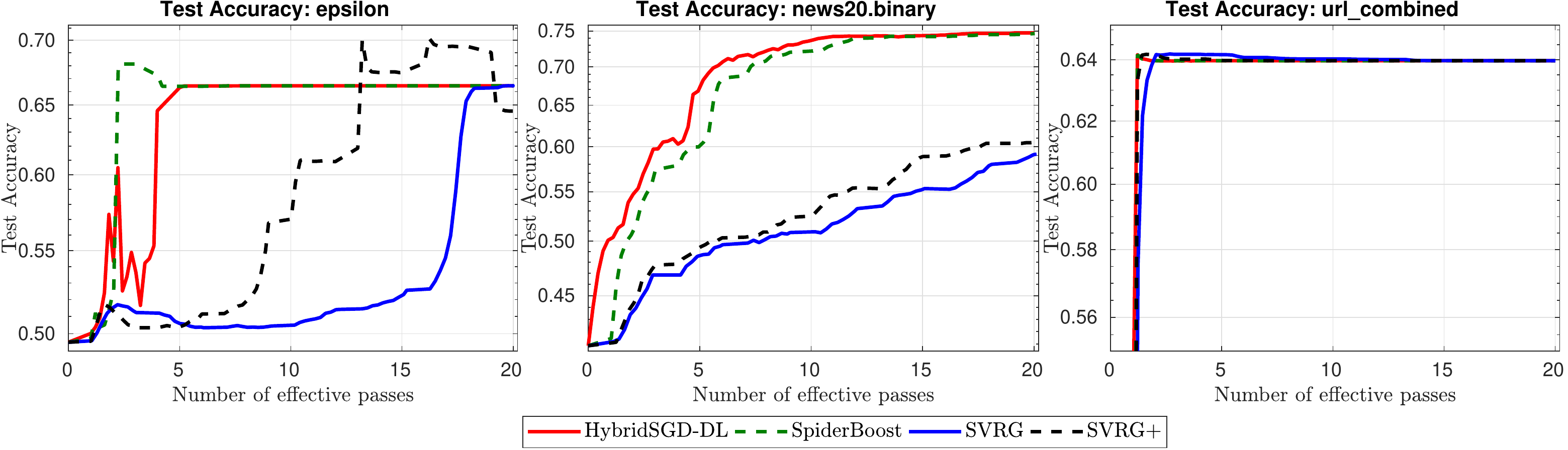}
\vspace{-1ex}
\caption{The results of $4$ double-loop algorithms for solving \eqref{eq:exam1} on large datsets: Mini-batch case.}\label{fig:larger_datasets2}
\end{center}
\vspace{-2ex}
\end{figure}

Clearly, our double-loop variants achieve better performance than SVRG and SVRG+ due to better convergence rate. SpiderBoost is slightly better than ours in the dataset \texttt{epsilon} while they are comparable in the last two datasets since we have the same best-known convergence rate as SpiderBoost.

\beforesubsec
\subsection{Binary classification involving non-convex loss and Tikhonov's regularizer}\label{subsec:example2}
\aftersubsec
We also conduct additional experiments to test our algorithms for solving \eqref{eq:exam2}.
We use two different non-convex loss functions as in \cite{zhao2010convex} apart from the one used in the main text, which are:
\begin{itemize}
\vspace{-1ex}
\item \textit{Nonconvex loss in two-layer neural networks}: $\ell_1(\tau, s) = \left(1 - \frac{1}{1+\exp(-\tau s)}\right)^2$. 
\item \textit{Logistic difference loss}: $\ell_2(\tau, s) = \log(1 + \exp(\tau s)) - \log(1 + \exp(-\tau s - 1))$.
\vspace{-1ex}
\end{itemize}
These functions are smooth and satisfy Assumption~\ref{as:A1}.

Let us first test our algorithms and other methods on three datasets: \texttt{w8a}, \texttt{rcv1.binary}, and \texttt{real-sim} using single-sample setting. The results are plotted in Fig.~\ref{fig:binary_single2} and Fig.~\ref{fig:binary_single3}. In this test, HybridSGD-DL achieves the best performance followed by HybridSGD-SL and HybridSGD-ASL. SPIDER has decent performance in the last two datasets. SGD variants also have good performance in all datasets while SGD2 is better than its fixed step-size variant. SVRG+ also has comparable performance with SGD2 whereas SVRG cannot achieve fast convergence due to its small step-size.

\begin{figure}[htp!]
\begin{center}
\includegraphics[width = 1\textwidth]{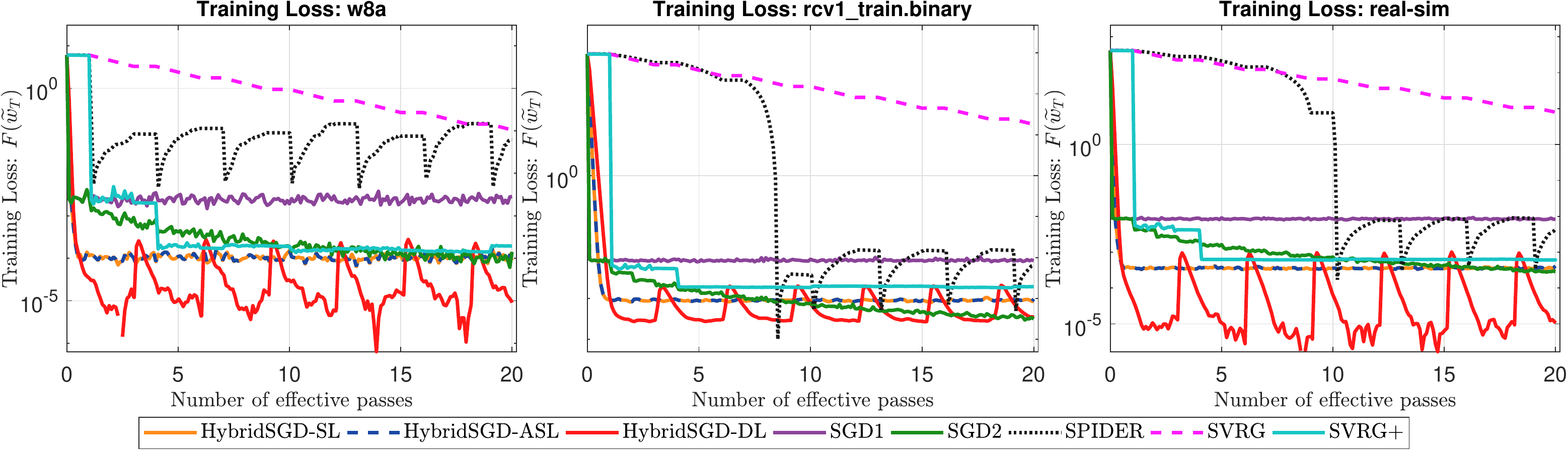}
\includegraphics[width = 1\textwidth]{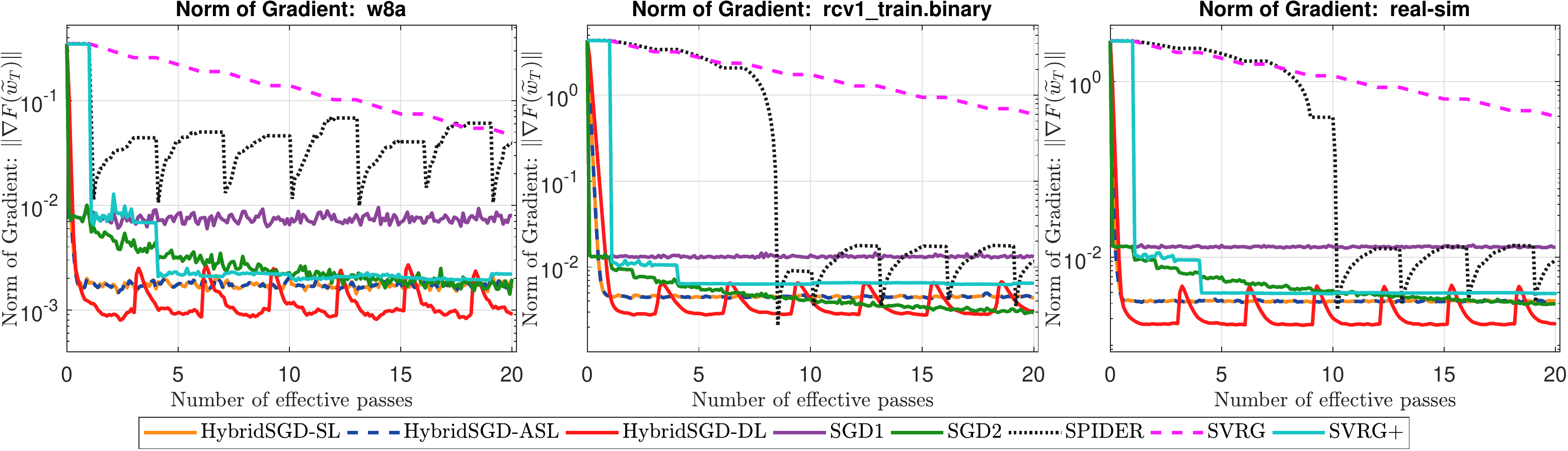}
\includegraphics[width = 1\textwidth]{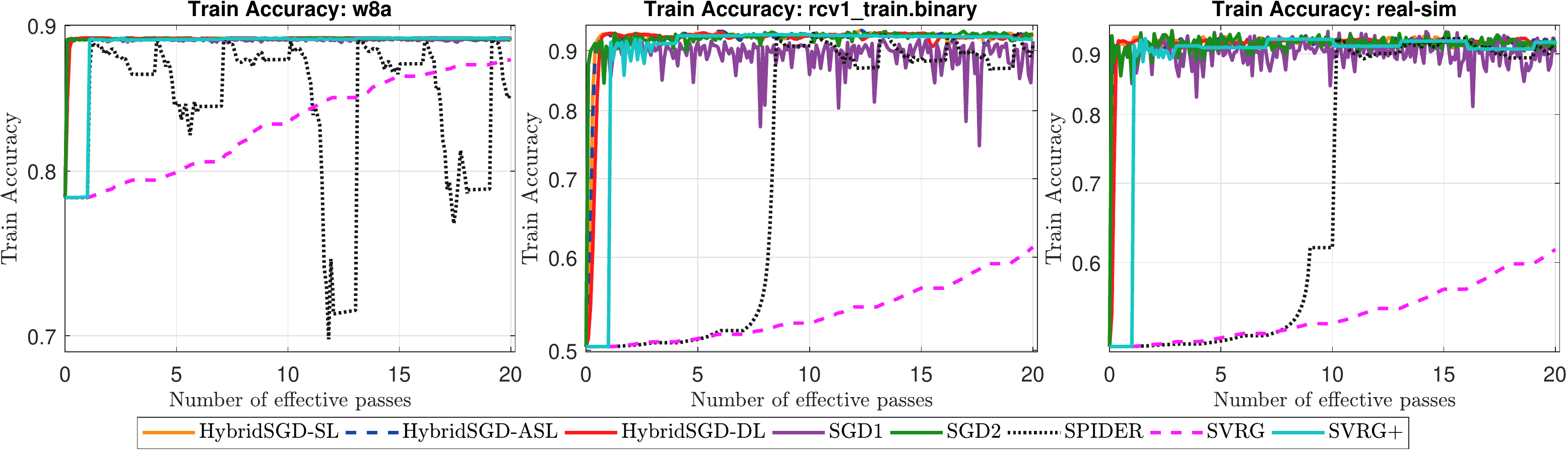}
\includegraphics[width = 1\textwidth]{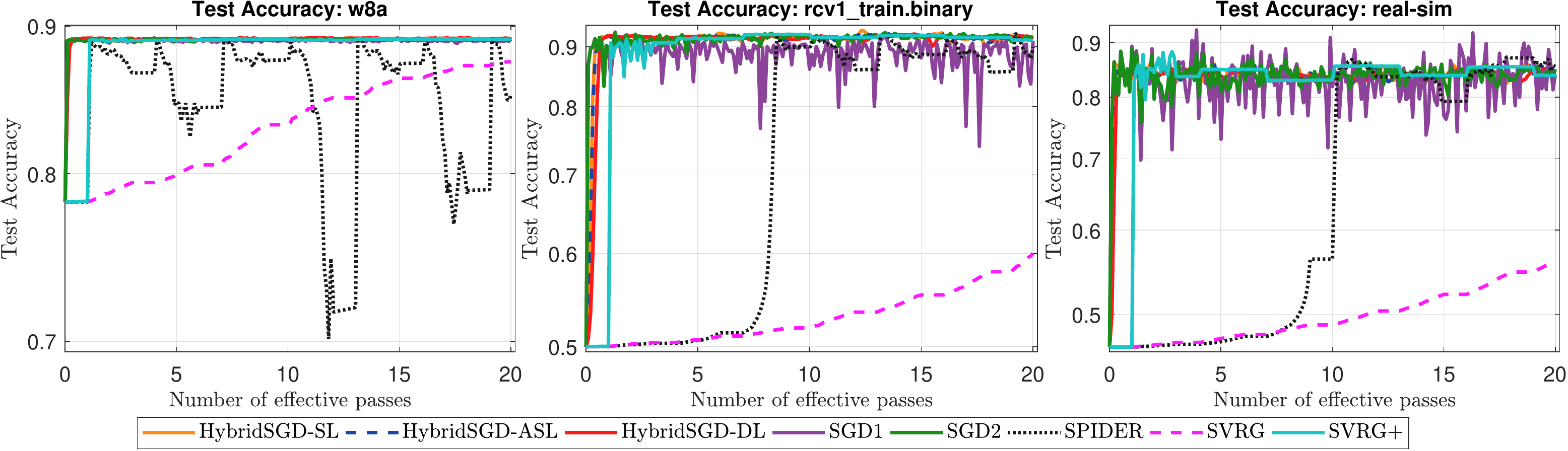}
\vspace{-1ex}
\caption{\done{The training loss and gradient  norms of \eqref{eq:exam2}  with loss $\ell_1$: Single-sample.}}\label{fig:binary_single2}
\end{center}
\vspace{-2ex}
\end{figure}

\begin{figure}[htp!]
\begin{center}
\includegraphics[width = 1\textwidth]{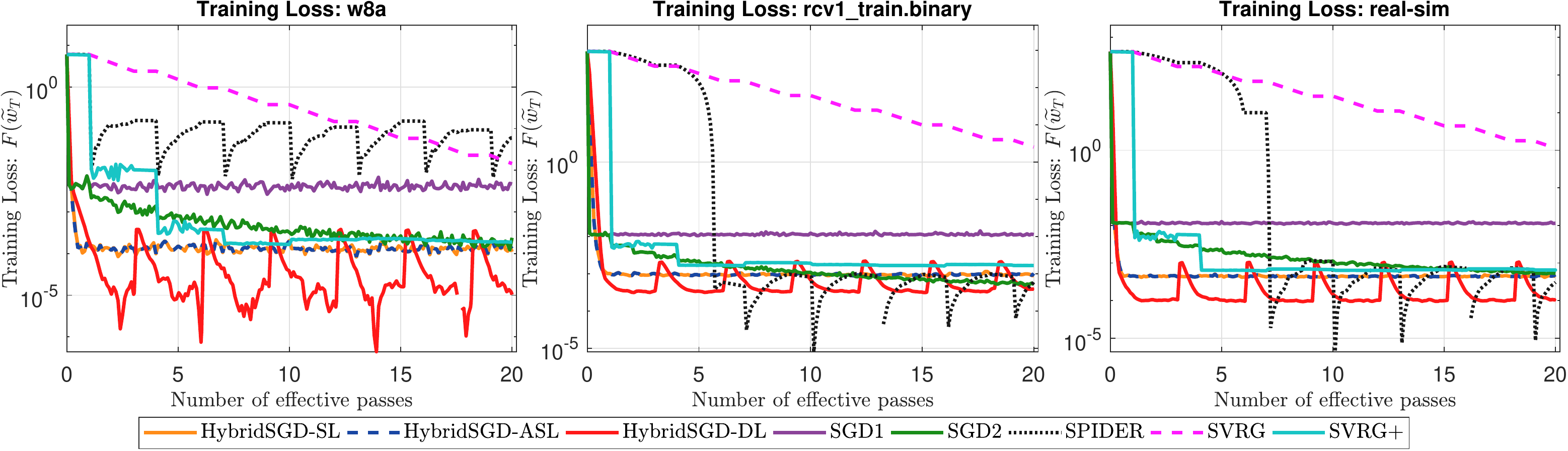}
\includegraphics[width = 1\textwidth]{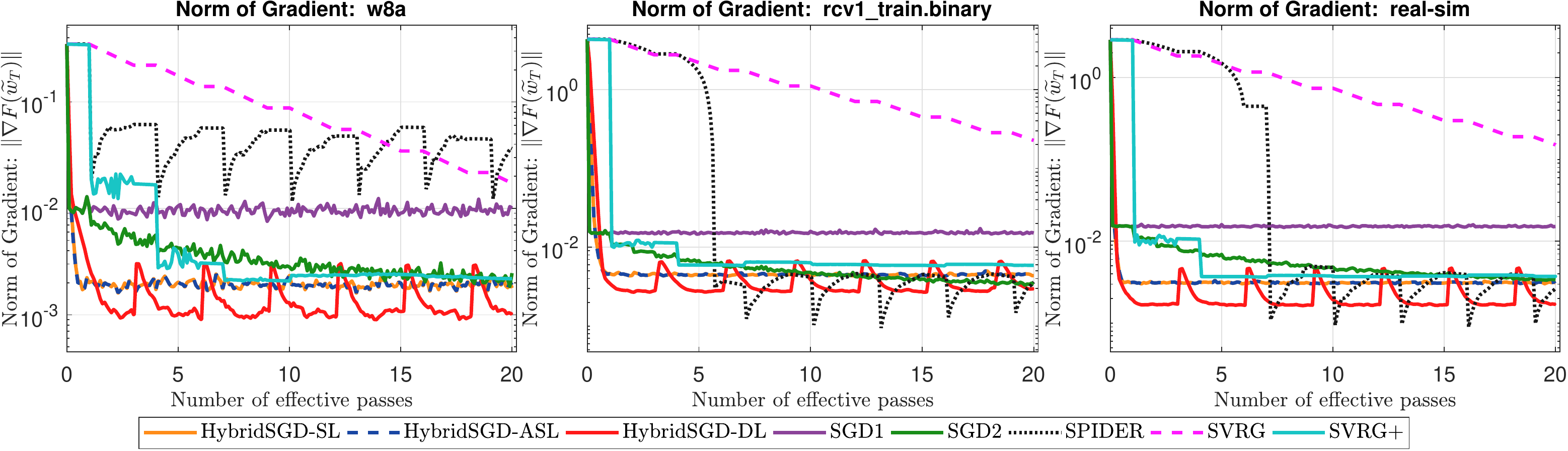}
\includegraphics[width = 1\textwidth]{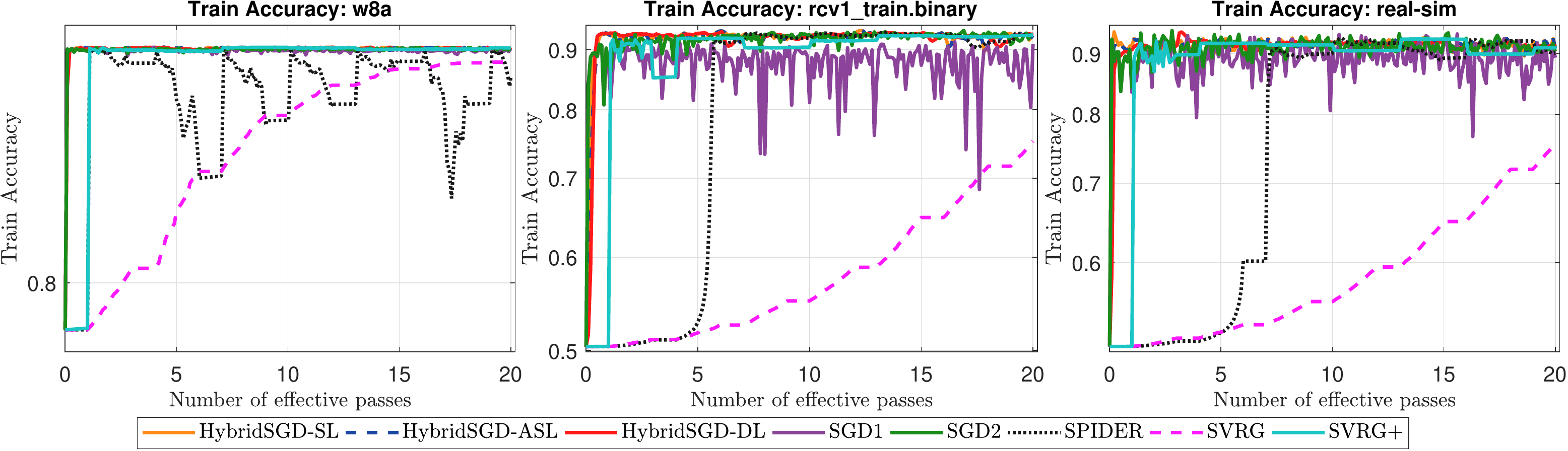}
\includegraphics[width = 1\textwidth]{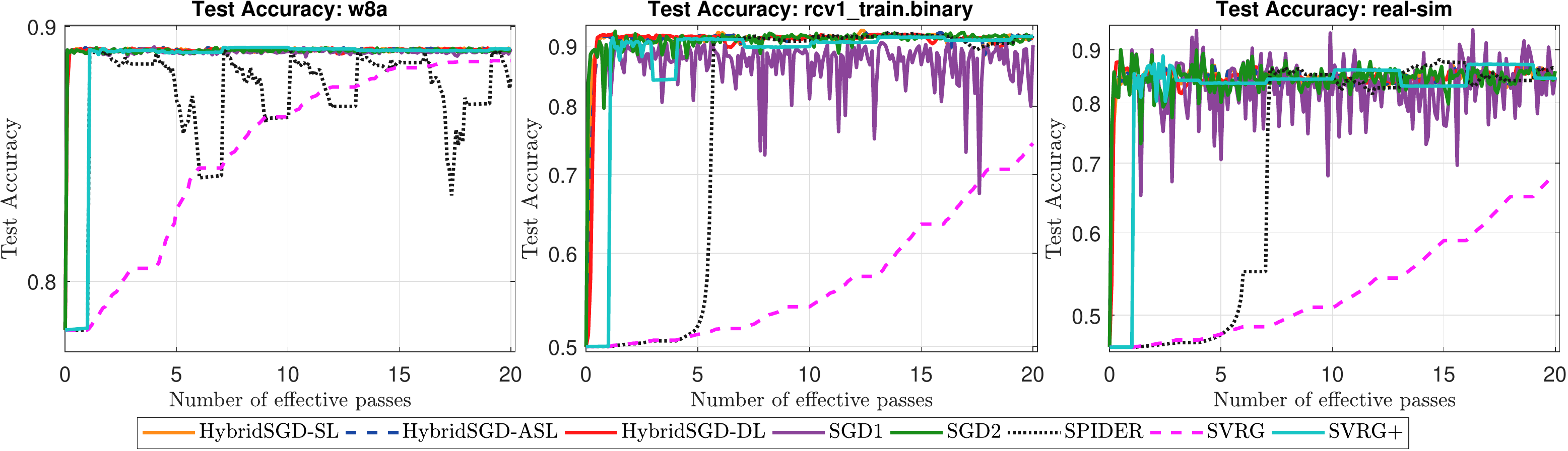}
\vspace{-1ex}
\caption{\done{The training loss and gradient norms of \eqref{eq:exam2} with loss $\ell_2$: Single-sample.}}\label{fig:binary_single3}
\end{center}
\vspace{-2ex}
\end{figure}

Next, we test mini-batch variants. 
On the one hand, we compare our single-loop variants HybridSGD-SL and HybridSGD-ASL with SGD. 
On the other hand, we compare our double-loop variants with SVRG, SVRG+, and SpiderBoost. The results for solving \eqref{eq:exam2} with loss $\ell_1$ are shown in Fig.~\ref{fig:binary_batch2_21} and Fig.~\ref{fig:binary_batch2_22} whereas Fig.~\ref{fig:binary_batch2_31} and Fig.~\ref{fig:binary_batch2_32} present the results when using loss $\ell_2$.

\begin{figure}[htp!]
\begin{center}
\includegraphics[width = 1\textwidth]{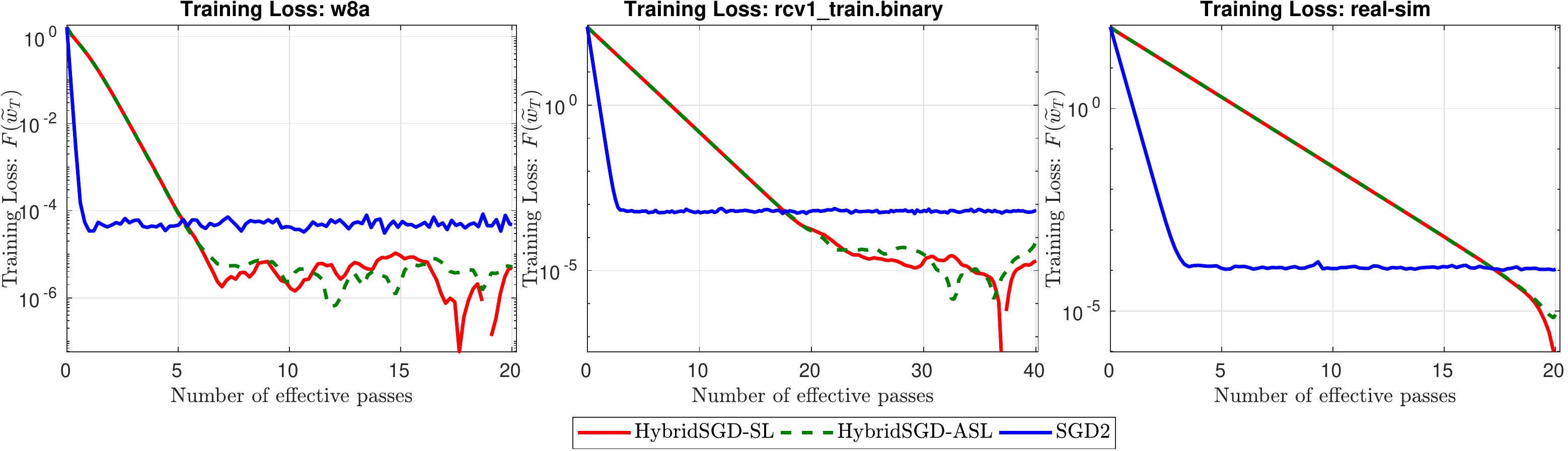}
\includegraphics[width = 1\textwidth]{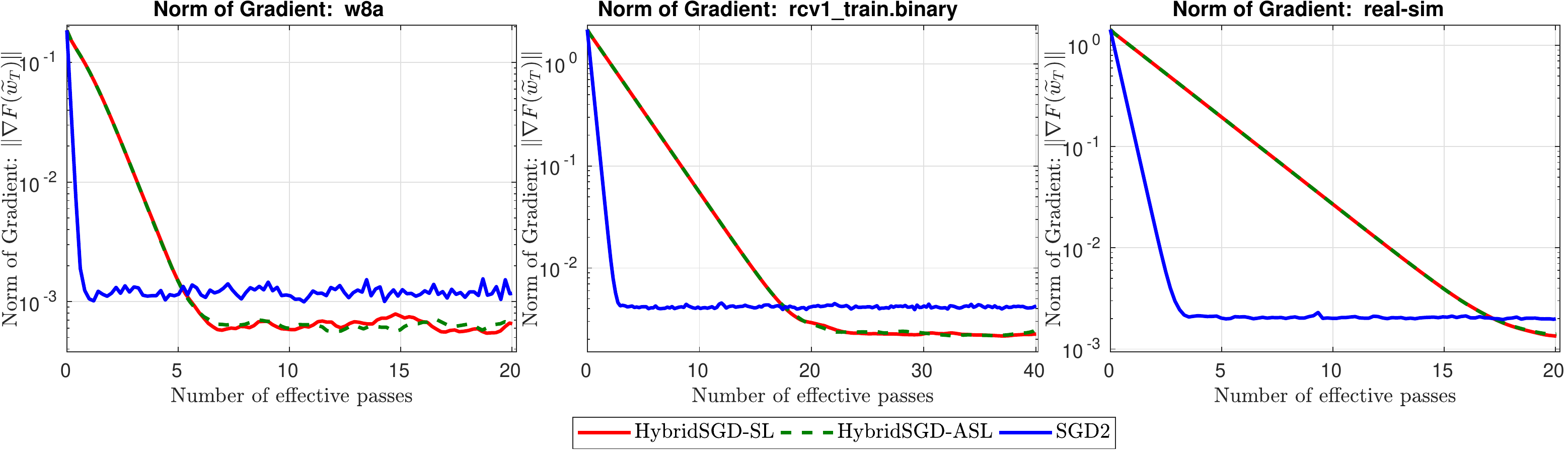}
\includegraphics[width = 1\textwidth]{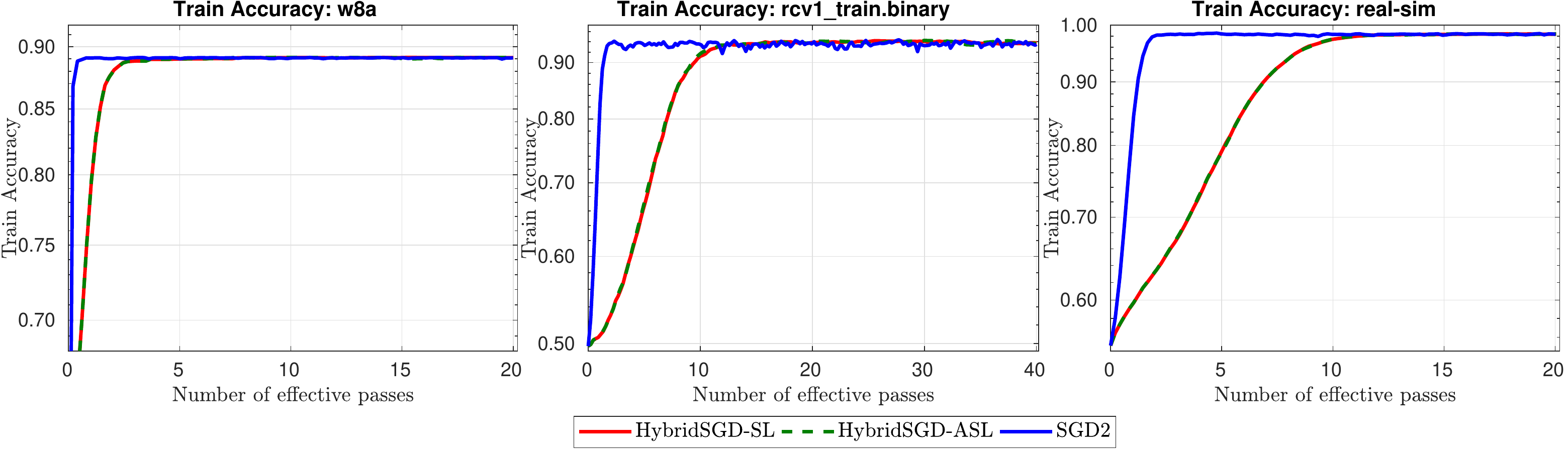}
\includegraphics[width = 1\textwidth]{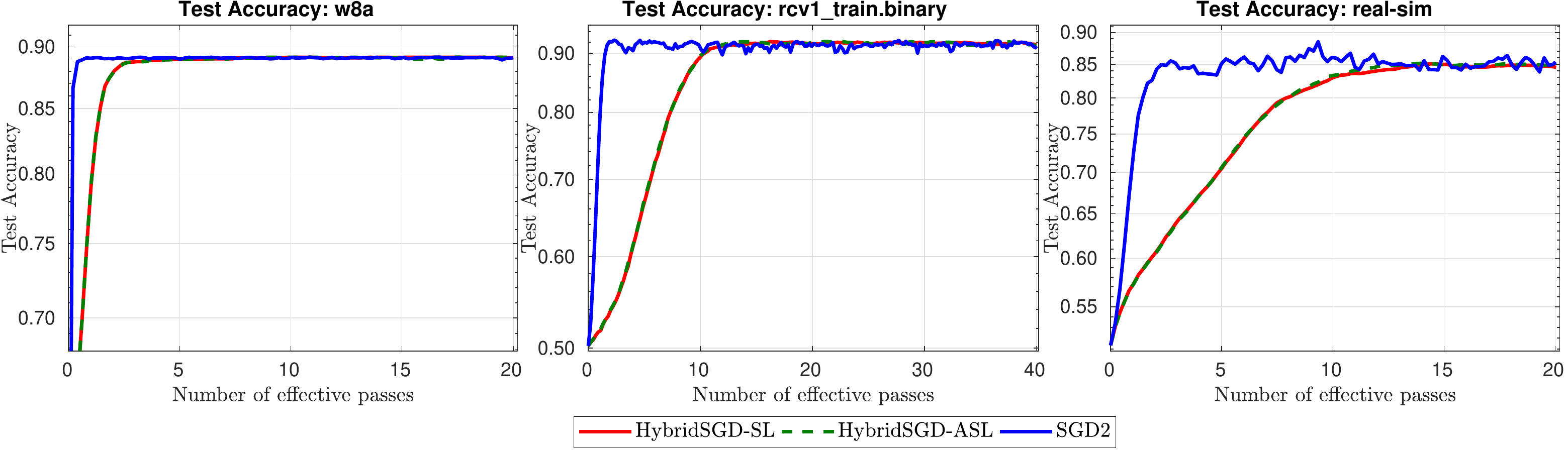}
\vspace{-1ex}
\caption{\done{The training loss and gradient norms of \eqref{eq:exam2} with loss $\ell_1$: Mini-batch.}}\label{fig:binary_batch2_21}
\end{center}
\vspace{-2ex}
\end{figure}

\begin{figure}[htp!]
\begin{center}
\includegraphics[width = 1\textwidth]{figs3/binary_loss2_batch_TrainLoss_2}
\includegraphics[width = 1\textwidth]{figs3/binary_loss2_batch_GradNorm_2}
\includegraphics[width = 1\textwidth]{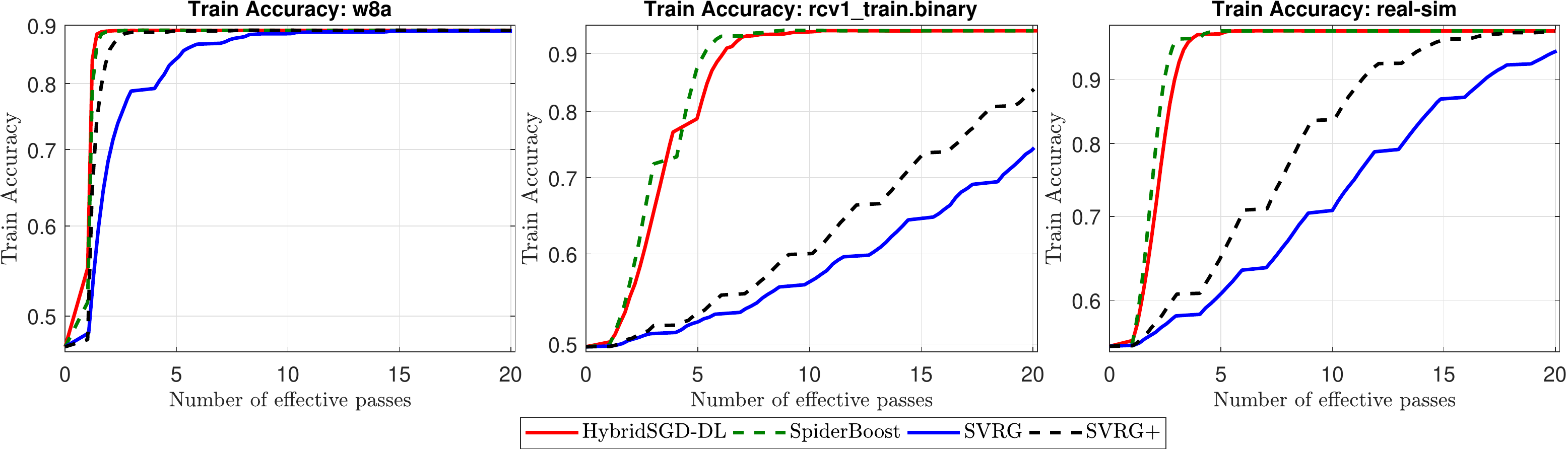}
\includegraphics[width = 1\textwidth]{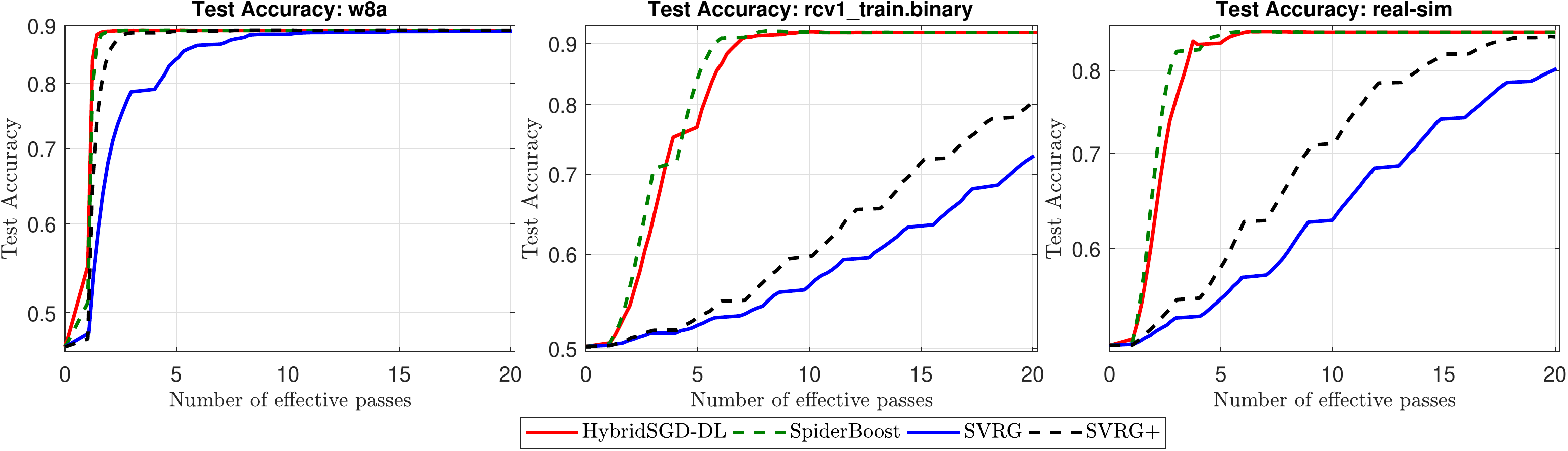}
\vspace{-1ex}
\caption{\done{The training loss and gradient norms of \eqref{eq:exam2}  with loss $\ell_1$: Mini-batch.}}\label{fig:binary_batch2_22}
\end{center}
\vspace{-2ex}
\end{figure}

\begin{figure}[htp!]
\begin{center}
\includegraphics[width = 1\textwidth]{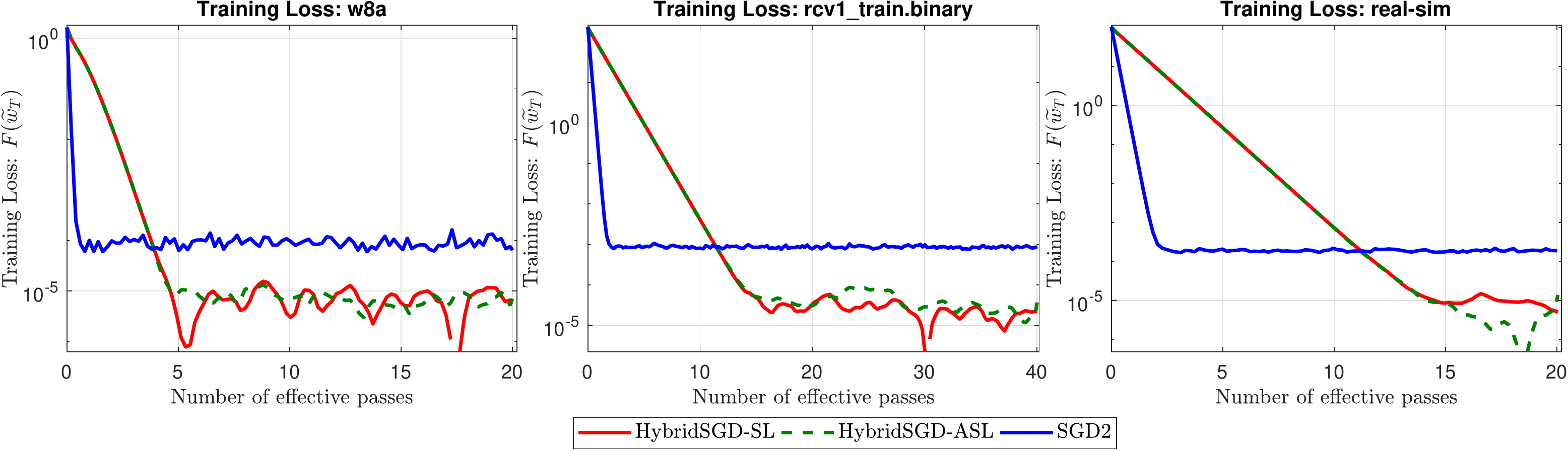}
\includegraphics[width = 1\textwidth]{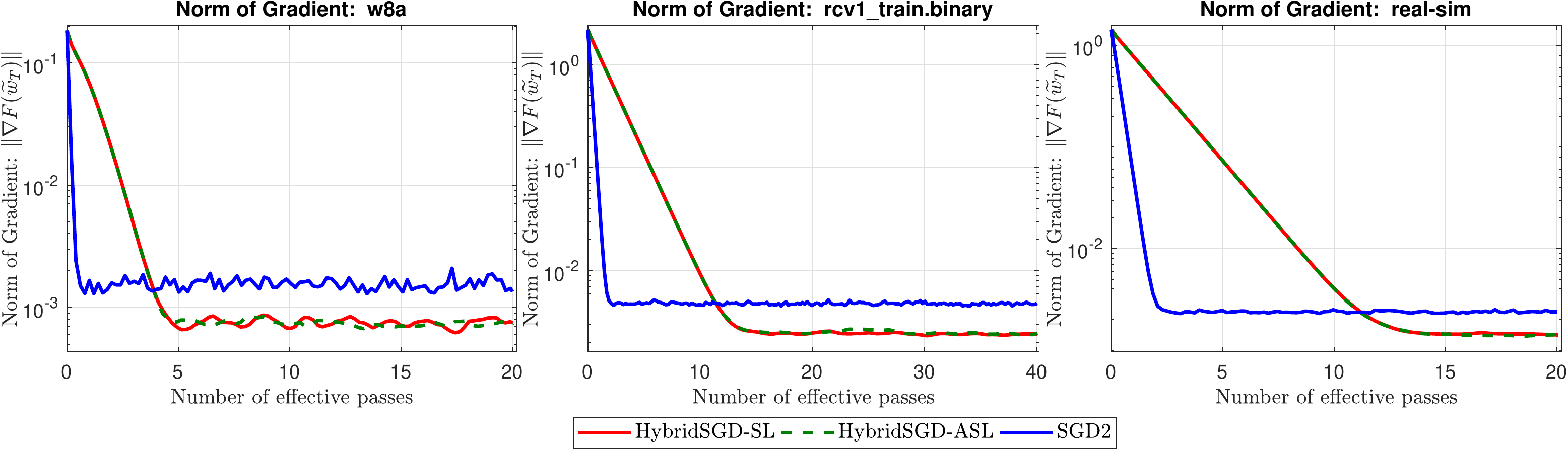}
\includegraphics[width = 1\textwidth]{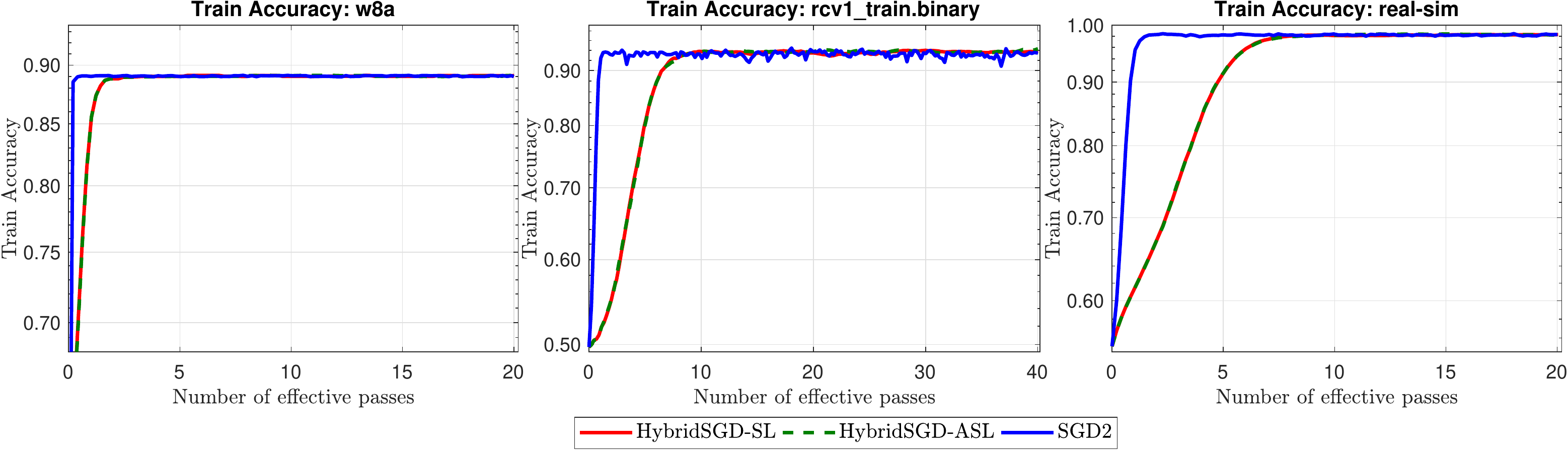}
\includegraphics[width = 1\textwidth]{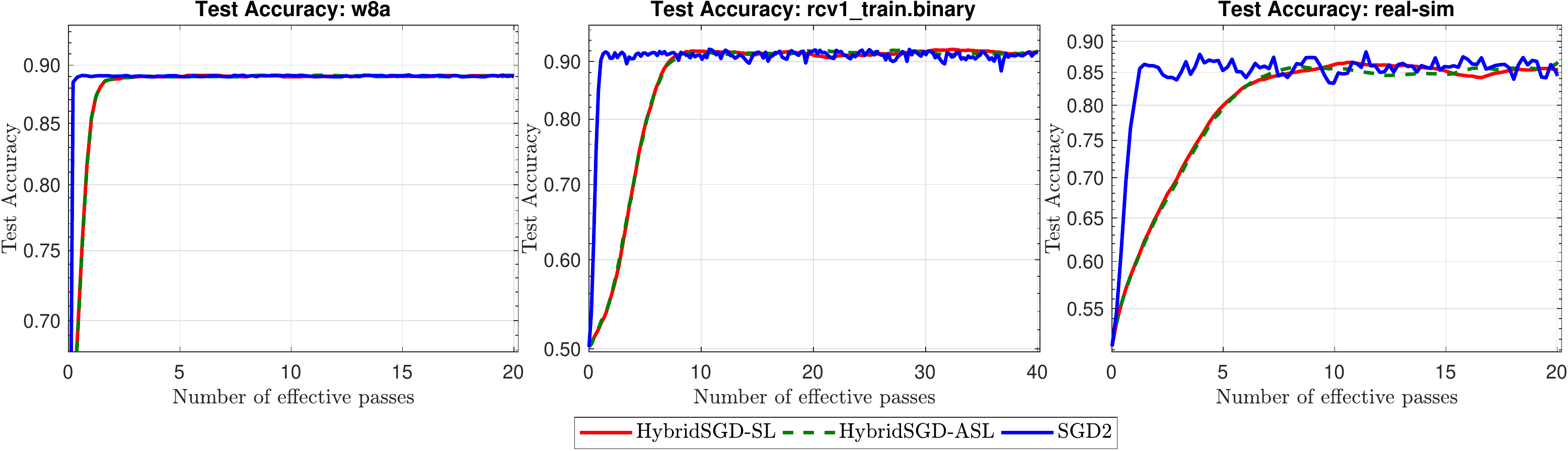}
\vspace{-1ex}
\caption{\done{The training loss and gradient norms of \eqref{eq:exam2}  with loss $\ell_2$: Mini-batch.}}\label{fig:binary_batch2_31}
\end{center}
\vspace{-2ex}
\end{figure}

\begin{figure}[htp!]
\begin{center}
\includegraphics[width = 1\textwidth]{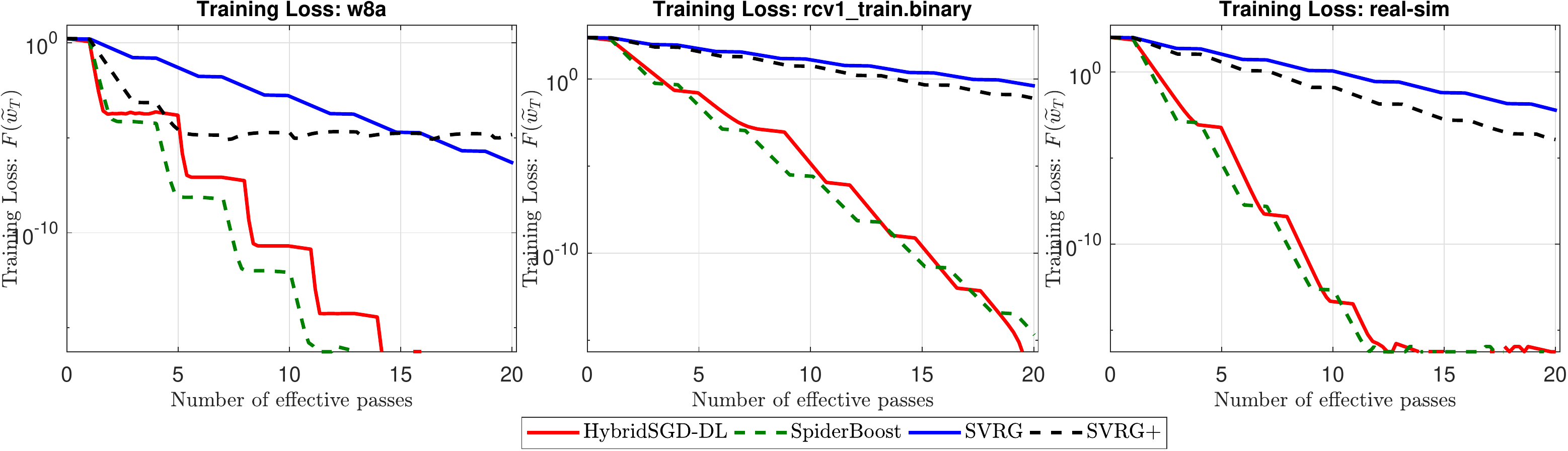}
\includegraphics[width = 1\textwidth]{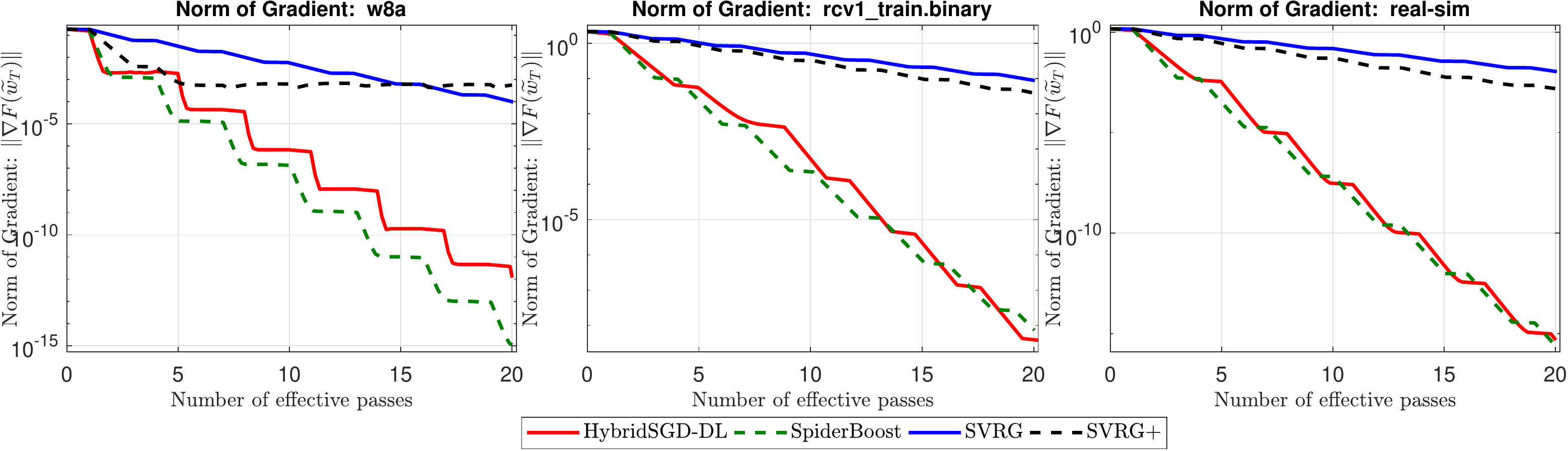}
\includegraphics[width = 1\textwidth]{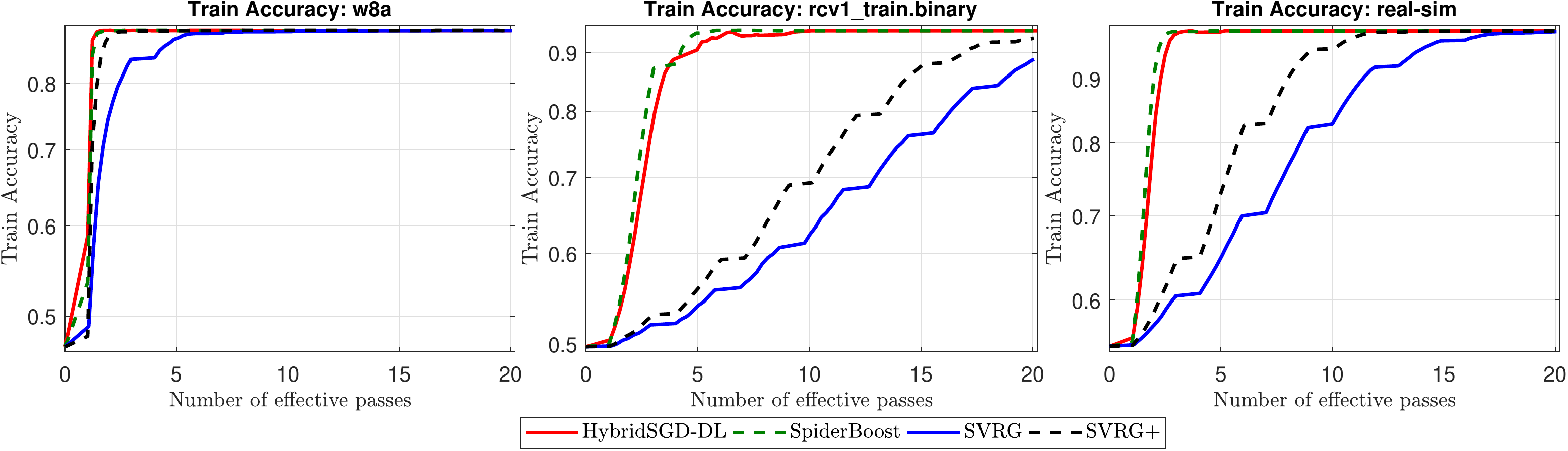}
\includegraphics[width = 1\textwidth]{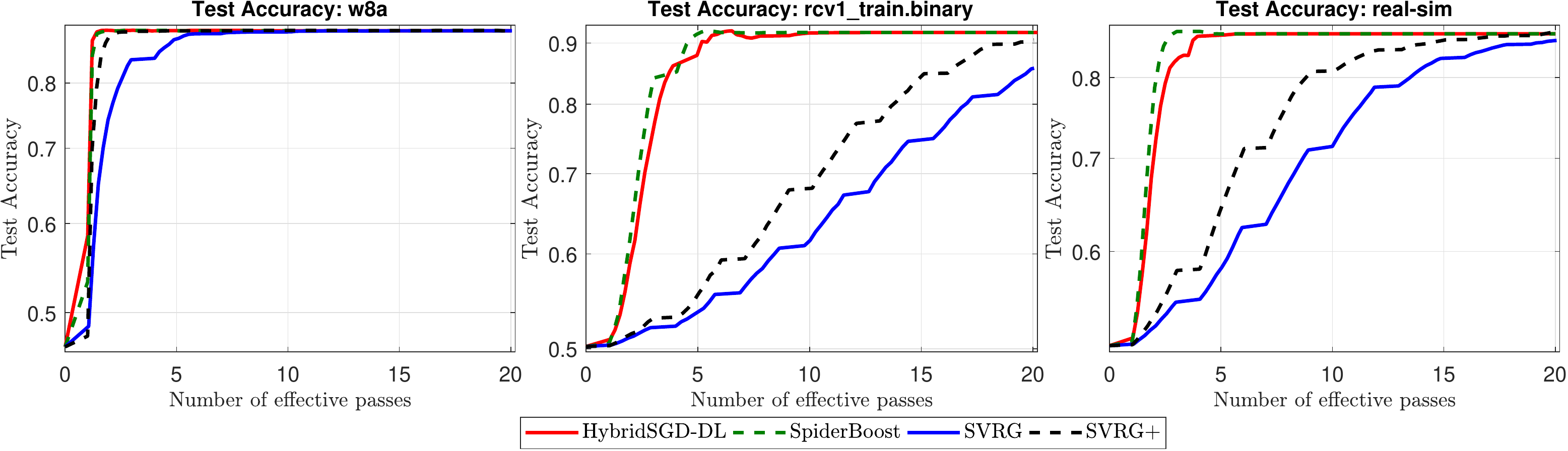}
\vspace{-1ex}
\caption{\done{The training loss and gradient norms of \eqref{eq:exam2}  with loss $\ell_2$: Mini-batch.}}\label{fig:binary_batch2_32}
\end{center}
\vspace{-2ex}
\end{figure}

Additionally, we repeat the experiments on three larger datasets: \texttt{epsilon}, \texttt{news20.binary}, and \texttt{ulr\_combined}. 
The results are shown in Fig.~\ref{fig:binary_batch4_21}, \ref{fig:binary_batch4_22}, \ref{fig:binary_batch4_31}, and \ref{fig:binary_batch4_32}.

\begin{figure}[H]
\begin{center}
\includegraphics[width = 1\textwidth]{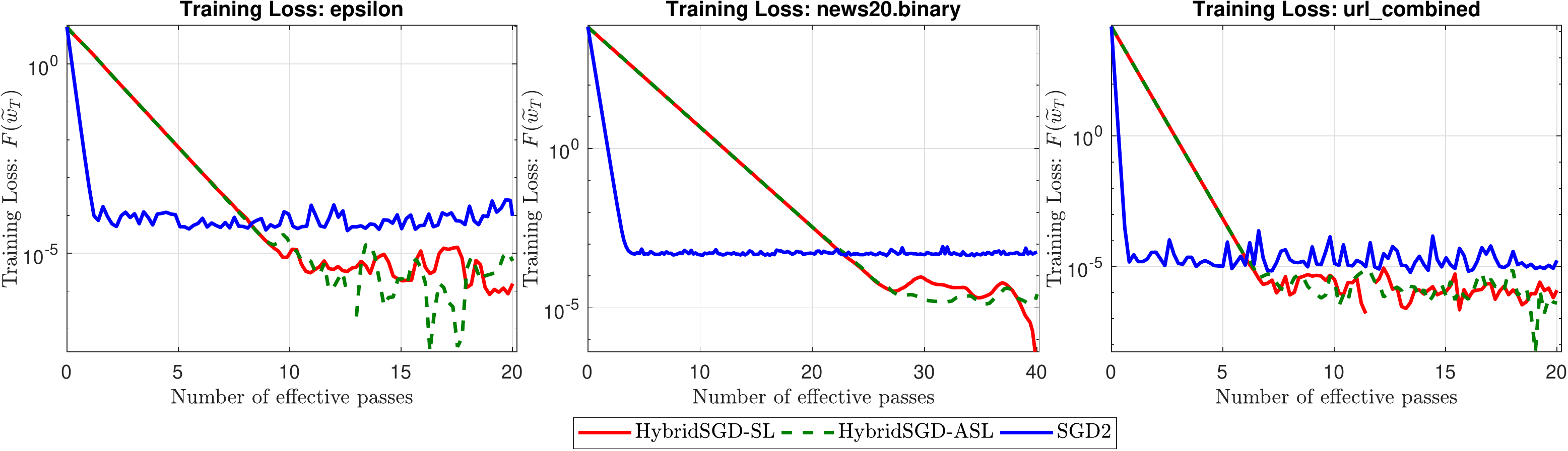}
\includegraphics[width = 1\textwidth]{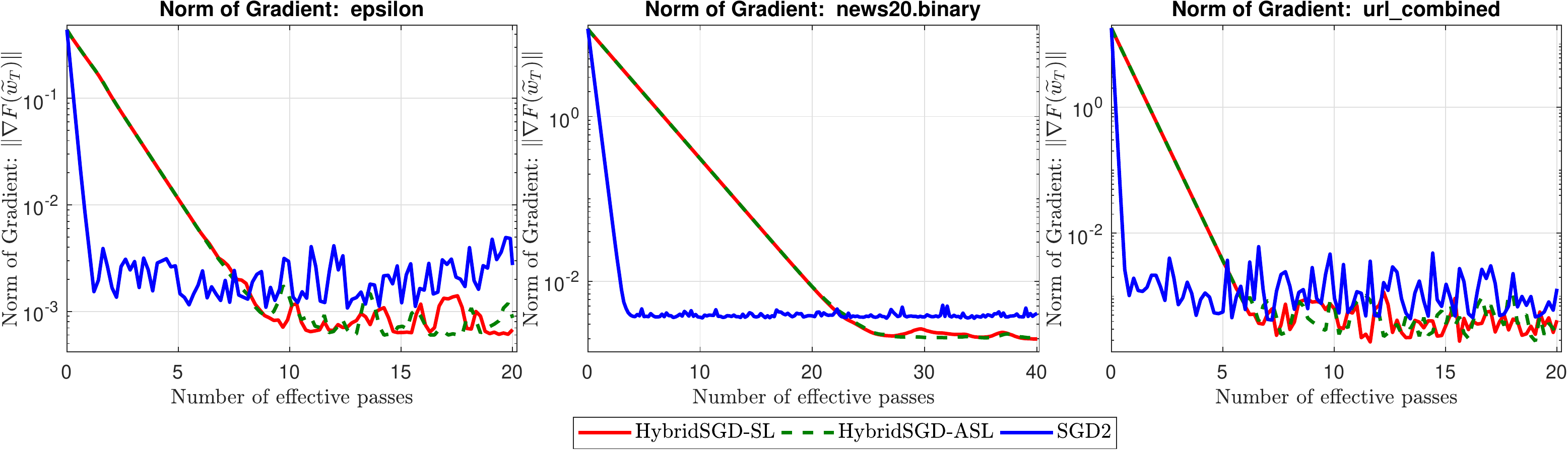}
\includegraphics[width = 1\textwidth]{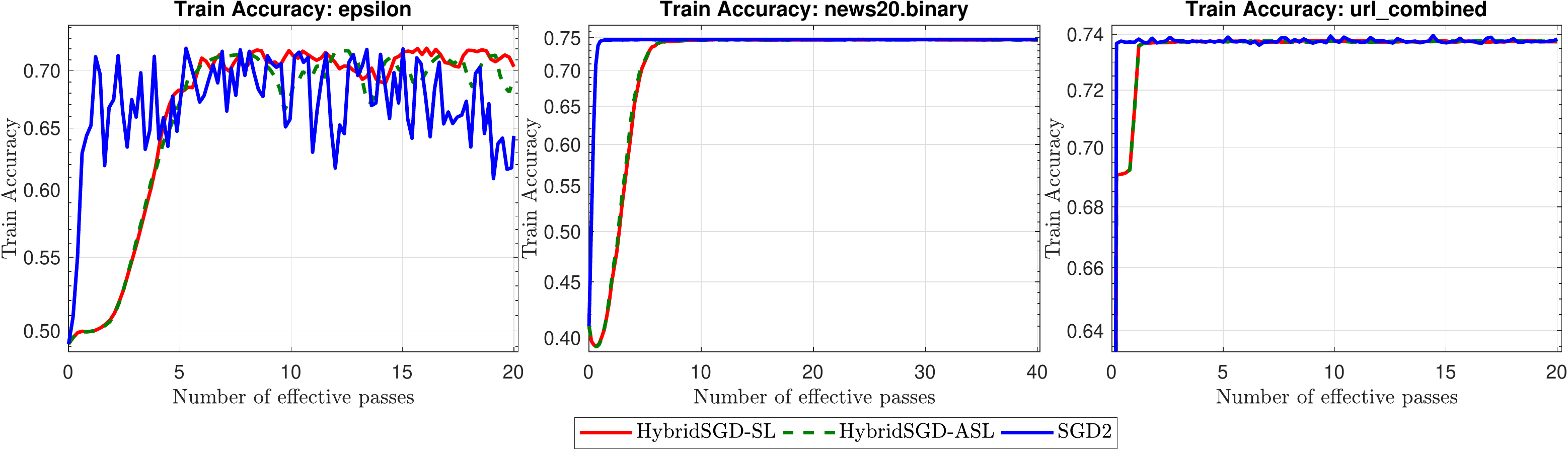}
\includegraphics[width = 1\textwidth]{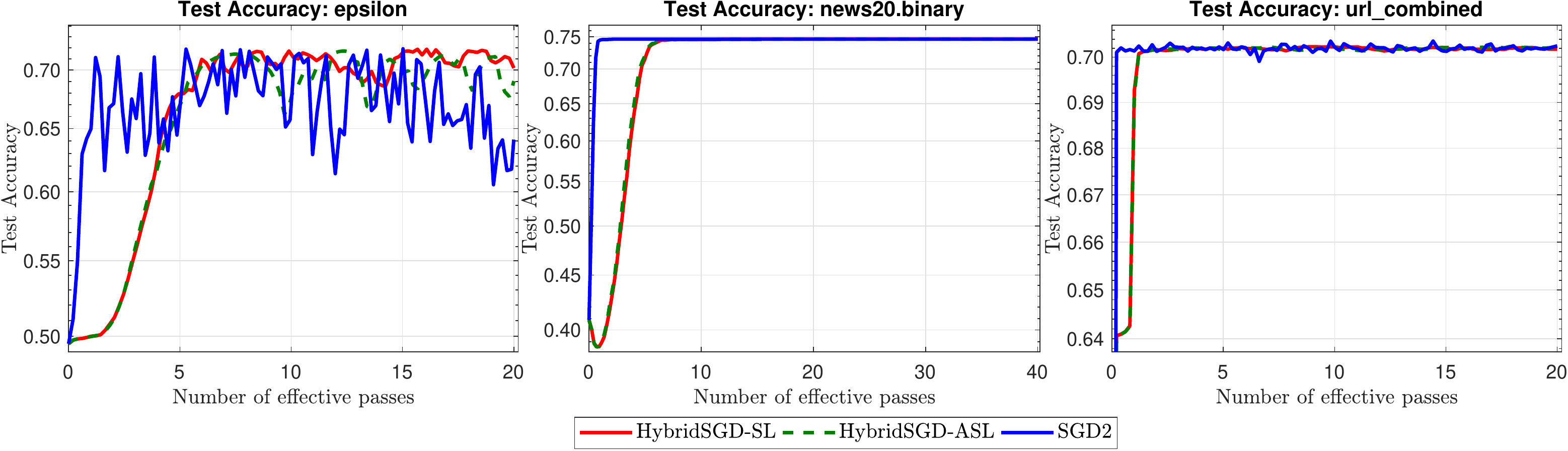}
\vspace{-1ex}
\caption{\done{The training loss and gradient norms of \eqref{eq:exam2}  with loss $\ell_1$: Mini-batch.}}\label{fig:binary_batch4_21}
\end{center}
\vspace{-2ex}
\end{figure}

\begin{figure}[H]
\begin{center}
\includegraphics[width = 1\textwidth]{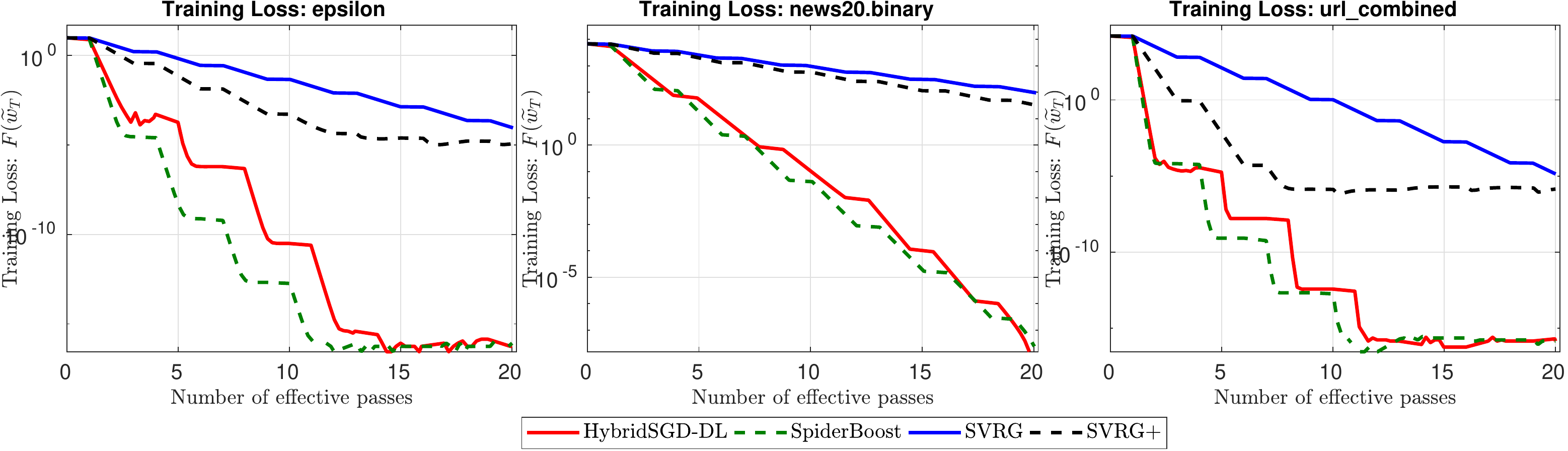}
\includegraphics[width = 1\textwidth]{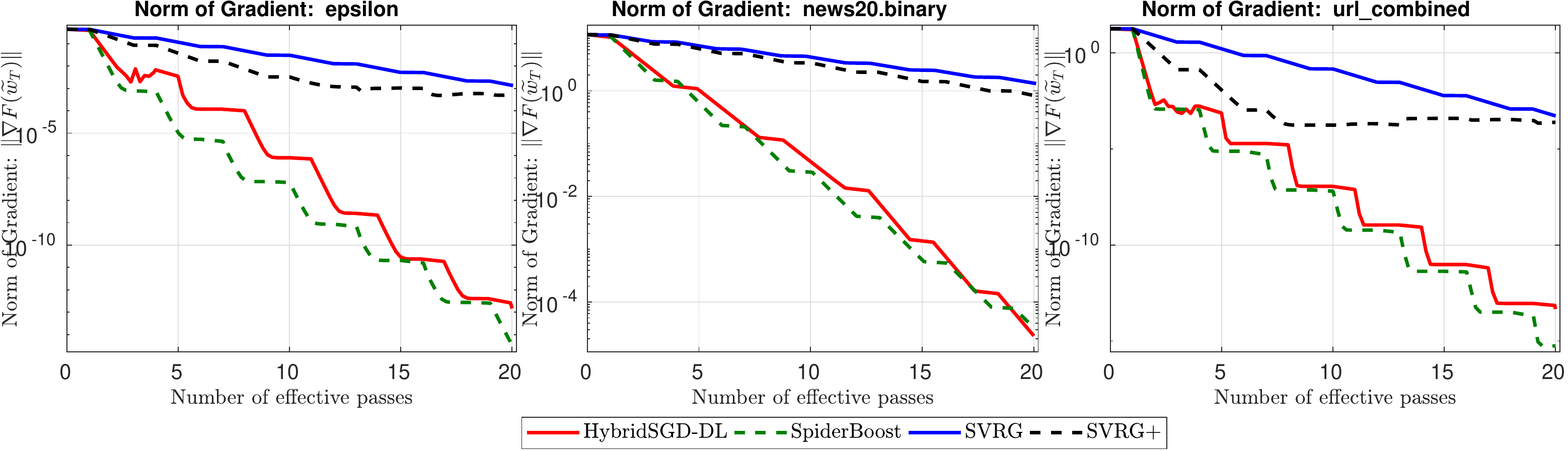}
\includegraphics[width = 1\textwidth]{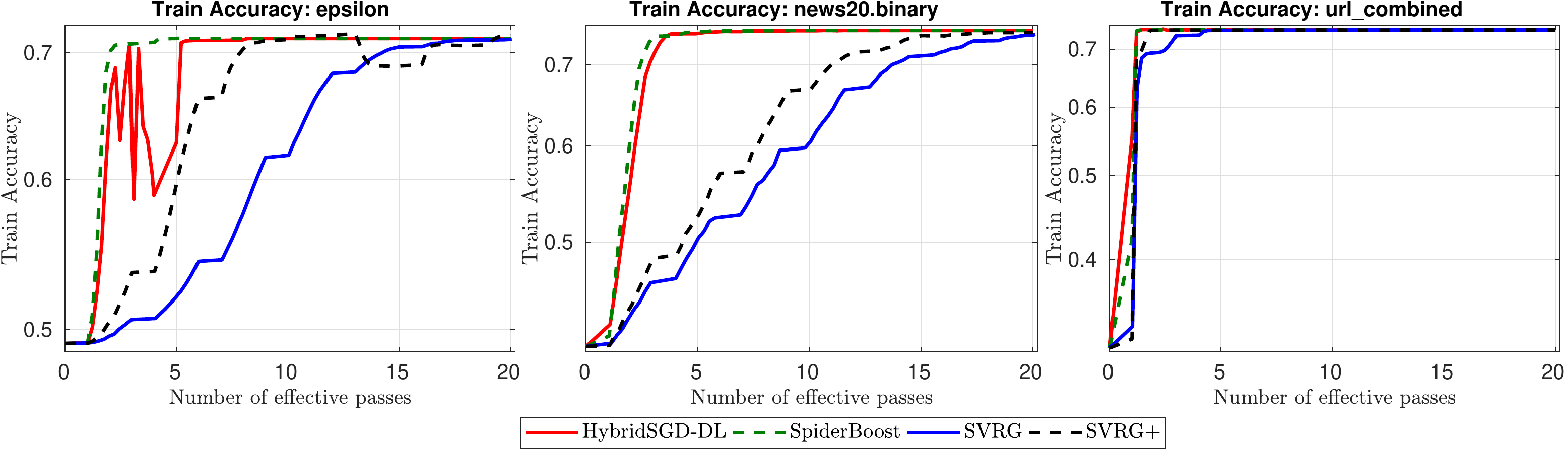}
\includegraphics[width = 1\textwidth]{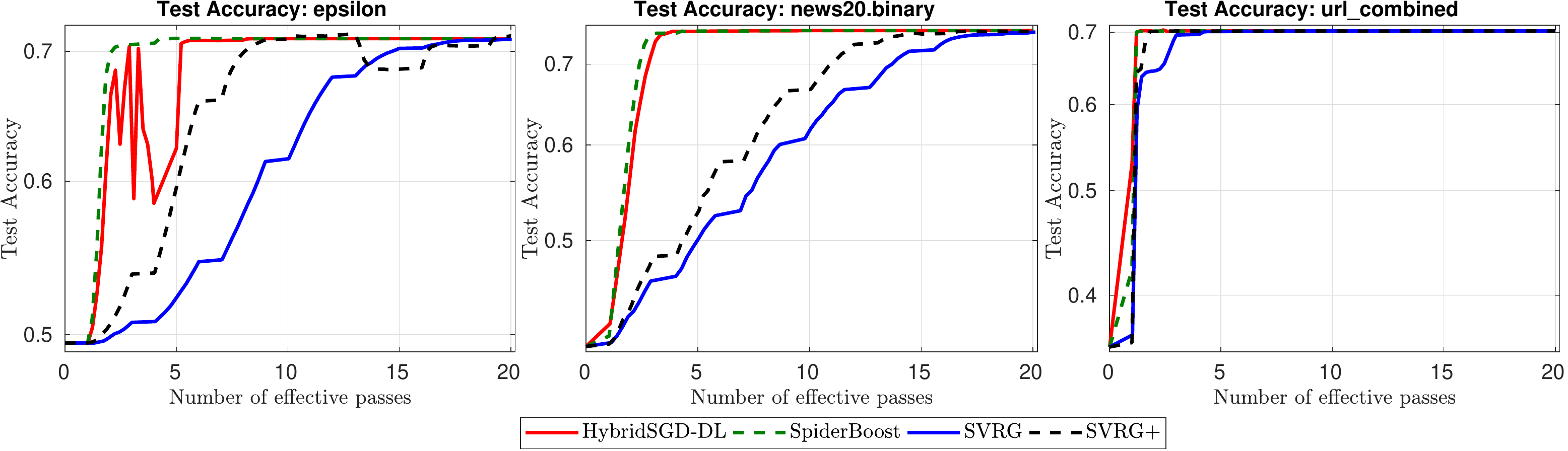}
\vspace{-1ex}
\caption{\done{The training loss and gradient norms of \eqref{eq:exam2}  with loss $\ell_1$: Mini-batch.}}\label{fig:binary_batch4_22}
\end{center}
\vspace{-2ex}
\end{figure}

\begin{figure}[htp!]
\begin{center}
\includegraphics[width = 1\textwidth]{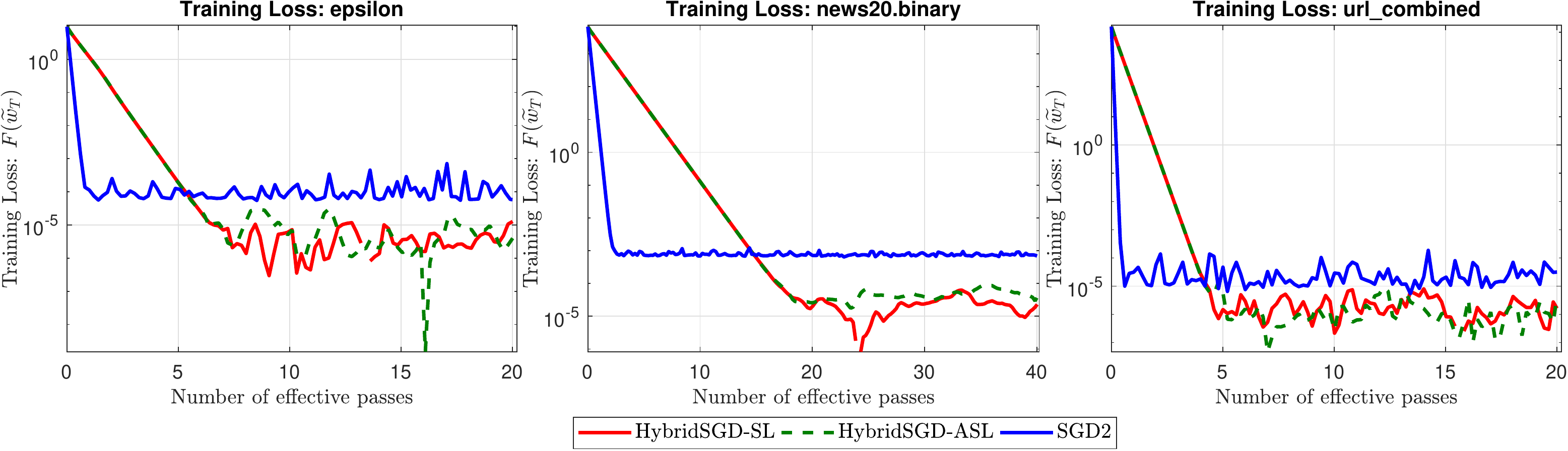}
\includegraphics[width = 1\textwidth]{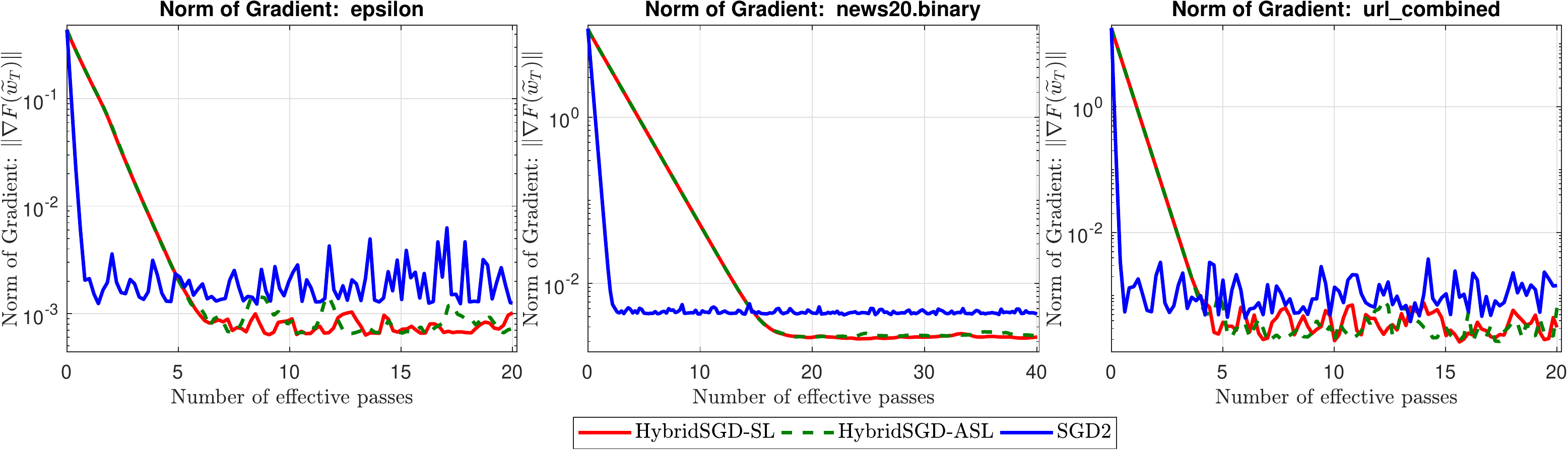}
\includegraphics[width = 1\textwidth]{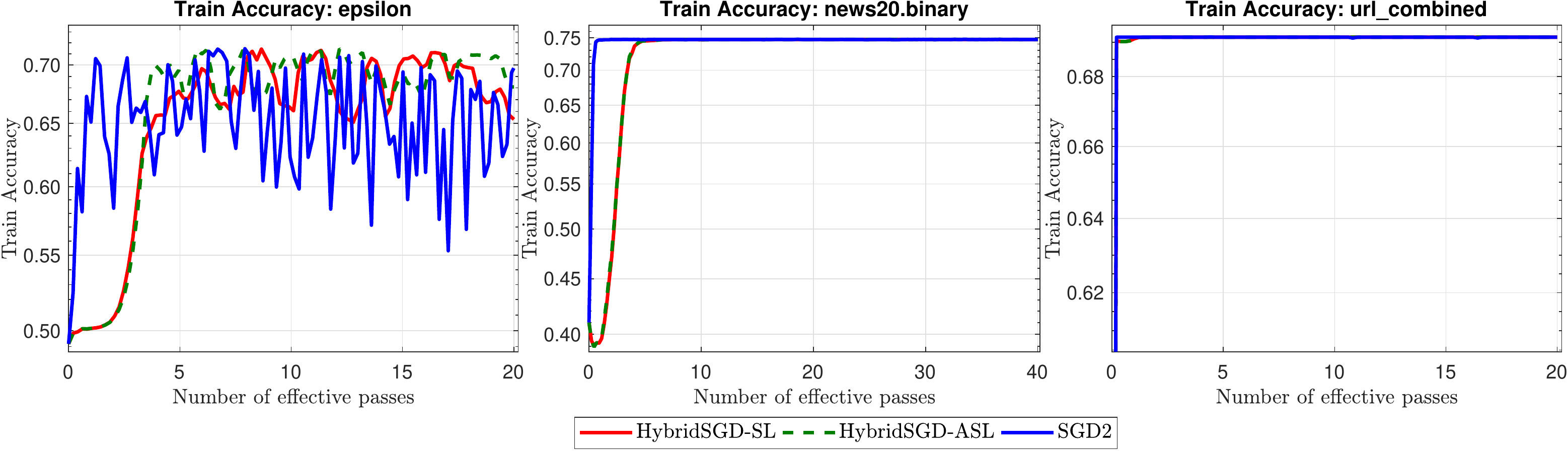}
\includegraphics[width = 1\textwidth]{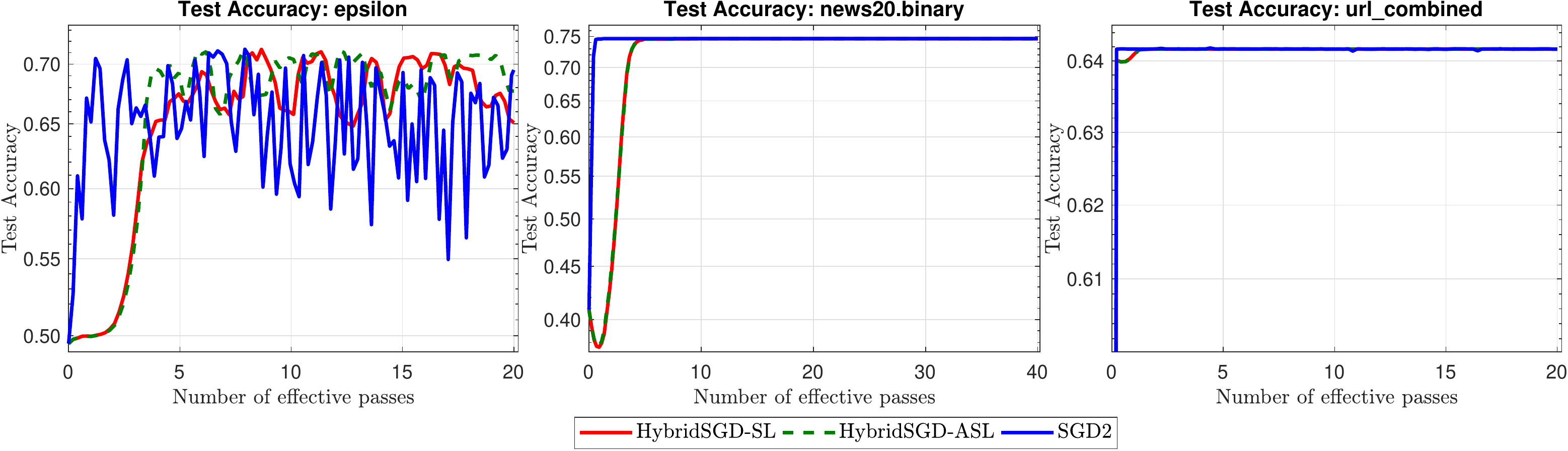}
\vspace{-1ex}
\caption{\done{The training loss and gradient norms of \eqref{eq:exam2}  with loss $\ell_2$: Mini-batch.}}\label{fig:binary_batch4_31}
\end{center}
\vspace{-2ex}
\end{figure}

\begin{figure}[htp!]
\begin{center}
\includegraphics[width = 1\textwidth]{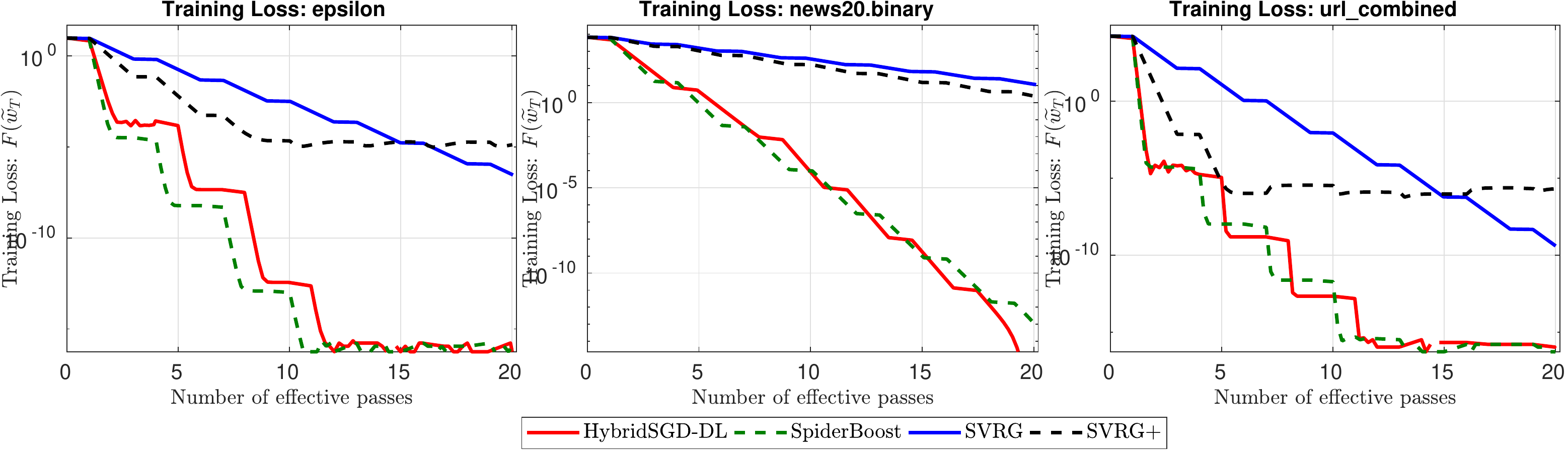}
\includegraphics[width = 1\textwidth]{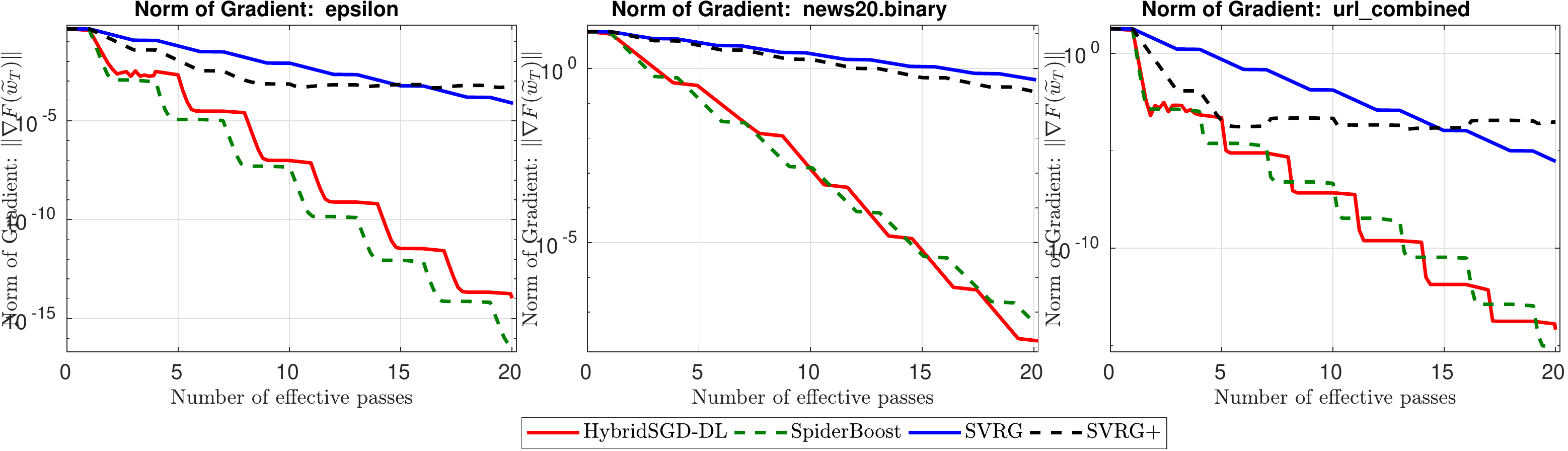}
\includegraphics[width = 1\textwidth]{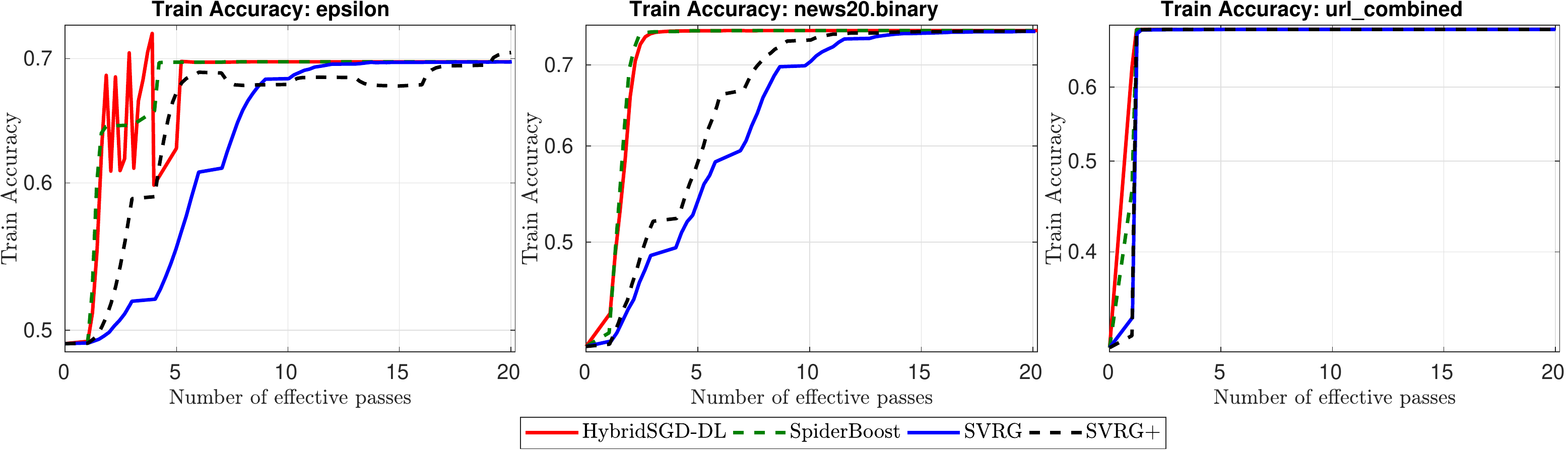}
\includegraphics[width = 1\textwidth]{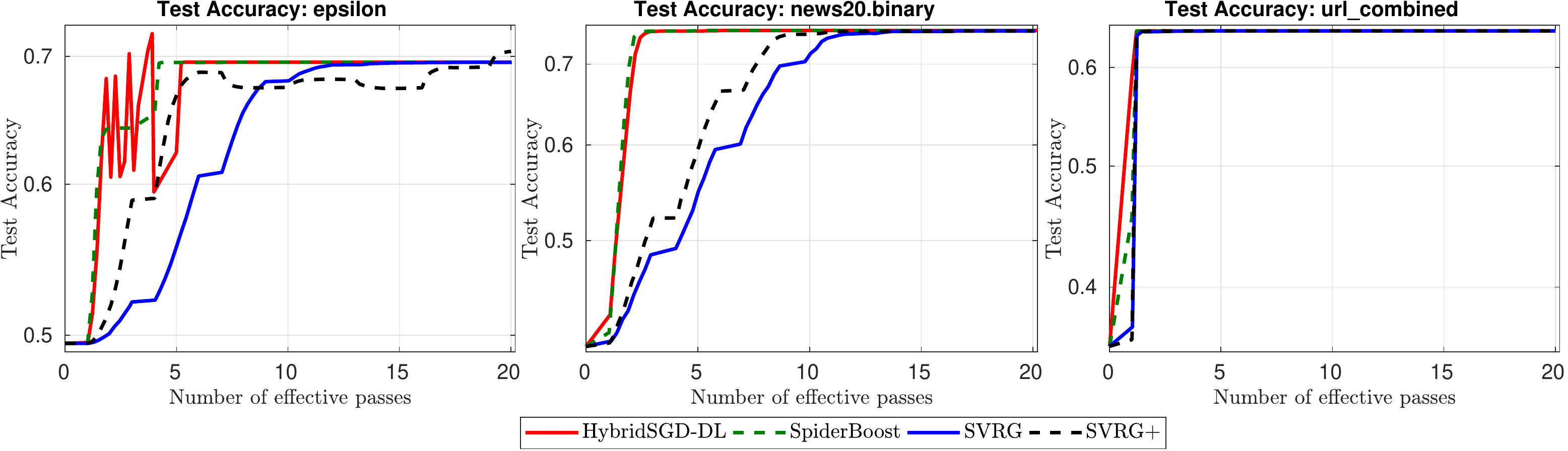}
\vspace{-1ex}
\caption{\done{The training loss and gradient norms of \eqref{eq:exam2}  with loss $\ell_2$: Mini-batch.}}\label{fig:binary_batch4_32}
\end{center}
\vspace{-2ex}
\end{figure}

In this experiment, although SGD2 has faster decrease during the first few epochs, our HybridSGD-SL and HybridSGD-ASL eventually achieve lower training loss and gradient norm in all datasets while reaching similar  training and testing accuracies as SGD2.

Regarding the double-loop variants, our HybridSGD-DL once again has better performance than SVRG and SVRG+ while having comparable performance with SpiderBoost in terms of training loss, gradient norm, and accuracies.

%

\bibliographystyle{plain}
\newcommand{\beforebib}{\vspace{0ex}}


\end{document}